\pgfplotsset{compat=newest}
\pgfplotsset{compat=newest}
\definecolor{color0}{rgb}{0.12156862745098,0.466666666666667,0.705882352941177}
\definecolor{color1}{rgb}{1,0.498039215686275,0.0549019607843137}
\definecolor{color2}{rgb}{0.172549019607843,0.627450980392157,0.172549019607843}
\definecolor{color3}{rgb}{0.83921568627451,0.152941176470588,0.156862745098039}
\definecolor{color4}{rgb}{0.580392156862745,0.403921568627451,0.741176470588235}
\definecolor{color5}{rgb}{0,0,0}
\definecolor{mycolor1}{rgb}{0.00000,0.44700,0.74100}
\definecolor{mycolor2}{rgb}{0.85000,0.32500,0.09800}
\definecolor{mycolor2light1}{rgb}{0.75000,0.27500,0.15000}    
\definecolor{mycolor2light2}{rgb}{0.85000,0.20000,0.20000}   
\definecolor{mycolor3}{rgb}{0.92900,0.69400,0.12500}
\definecolor{mycolor3light1}{rgb}{0.80000,0.60000,0.20000}   
\definecolor{mycolor3light2}{rgb}{0.95000,0.60000,0.05000}   
\definecolor{mycolor4}{rgb}{0.46600,0.67400,0.18800}
\definecolor{mycolor4light1}{rgb}{0.38000,0.58000,0.20000}
\definecolor{mycolor4light2}{rgb}{0.50000,0.72000,0.30000}
\definecolor{mycolor5}{rgb}{0.49400,0.18400,0.55600}
\crefname{proposition}{\textup{Proposition}}{\textup{Propositions}}
\crefname{assumption}{\textup{Assumption}}{\textup{Assumptions}}
\crefname{lemma}{\textup{Lemma}}{\textup{Lemmas}}
\crefname{algorithm}{\textup{Algorithm}}{\textup{Algorithms}}
\crefname{theorem}{\textup{Theorem}}{\textup{Theorems}}
\crefname{remark}{\textup{Remark}}{\textup{Remarks}}
\crefname{example}{\textup{Example}}{\textup{Examples}}
\crefname{corollary}{\textup{Corollary}}{\textup{Corollaries}}
\crefname{subsection}{\textup{Section}}{\textup{Subsections}}
\crefname{section}{\textup{Section}}{\textup{Sections}}
\let\@citexOld\@citex
\def\@citex[#1]#2{\textup{\@citexOld[#1]{#2}}}
\newcommand{\argmin}{\arg\!\min}
\newcommand{\bx}{{\bm x}}
\newcommand{\N}{\mathbb N}
\def\namedlabel#1#2{\begingroup
    #2%
    \def\@currentlabel{#2}%
    \phantomsection\label{#1}\endgroup
}
\def\R{\mathbb{R}}
\newcommand{\ddt}{\partial_t}
\newcommand{\bu}{\mathbf{u}}
\newcommand{\bv}{\mathbf{v}}
\newcommand{\dt}{\mathrm{d}t}
\newcommand{\ds}{\ \mathrm{d}s}
\newcommand{\reduce}[1]{{{#1}^r}}
\newcommand{\FOM}{FOM}
\newcommand{\ubar}[1]{\underaccent{\bar}{#1}}
\DeclareMathOperator*{\essinf}{ess\,inf}
\newcommand{\tint}{t_{\mathrm{in}}}
\newcommand{\yint}{y_{\mathrm{in}}}
\newcommand{\tildeyint}{\tilde y_{\mathrm{in}}}
\newcommand{\mU}{{\mathscr U}}
\newcommand{\mX}{{\mathscr X}}
\newcommand{\mY}{{\mathscr Y}}
\newcommand{\calL}{\mathcal{L}}
\newcommand{\calR}{\mathcal{R}}
\newcommand{\Res}{\calR}
\newcommand{\linspan}{\mathop{\rm span}\nolimits}
\newcommand{\rest}{\left.\kern-2\nulldelimiterspace\right|_}
\newcommand{\norm}[2]{\left|#1\right|_{#2}}
\newcommand{\scalprod}[3]{\langle#1, #2\rangle_{#3}}
\definecolor{DarkBlue}{rgb}{0,0.08,0.45}
\definecolor{DarkRed}{rgb}{.65,0,0}
\definecolor{applegreen}{rgb}{0.55, 0.71, 0.0}
\newcounter{mymac@matlab}
\newcommand{\matlab}{MATLAB%
   \ifnum\value{mymac@matlab}<1%
   \textregistered%
   \setcounter{mymac@matlab}{1}%
   \fi%
  }
\providecommand{\argmin}{\operatorname*{argmin}}
\newtheorem{lemma}[thrm]{Lemma}
\newtheorem{assumption}[thrm]{Assumption}
\newtheorem{definition}[thrm]{Definition}
\newtheorem{remark}[thrm]{Remark}
\newtheorem{proposition}[thrm]{Proposition}
\newtheorem{theorem}[thrm]{Theorem}
\newtheorem{corollary}[thrm]{Corollary}
\begin{document}
\title{Stabilization of parabolic time-varying PDEs using certified reduced-order receding horizon control}
\author{Behzad Azmi}\address{Department of Mathematics and Statistics, University of Konstanz, D-78457 Konstanz, \{behzad.azmi, michael.kartmann, stefan.volkwein\}@uni-konstanz.de.}
\author{Michael Kartmann$^1$}
\author{Stefan Volkwein$^1$}
\date{\today}
\begin{abstract} 
We address the stabilization of linear, time-varying parabolic PDEs using finite-dimen\-sional receding horizon controls (RHCs) derived from reduced-order models (ROMs). We first prove exponential stability and suboptimality of the continuous-time full-order model (FOM) RHC scheme in Hilbert spaces. A Galerkin model reduction is then introduced, along with a rigorous a posteriori error analysis for the associated finite-horizon optimal control problems. This results in a ROM-based RHC algorithm that adaptively constructs reduced-order controls, ensuring exponential stability of the FOM closed-loop state and providing computable performance bounds with respect to the infinite-horizon FOM control problem. Numerical experiments with a non-smooth cost functional involving the squared $\ell^1$-norm confirm the method’s effectiveness, even for exponentially unstable systems.
\end{abstract}
%
%
\subjclass{49M20, 35Q93, 49M25, 93C20, 65M15, 93A15}
\keywords{receding horizon control, parabolic pdes, non-smooth objectives, reduced-order modeling, proper orthogonal decomposition, a posteriori error analysis}
\maketitle

\section{Introduction}
Receding Horizon Control (RHC), also known as Model Predictive Control (MPC), is an optimization-based strategy for solving infinite-horizon optimal control problems. In this framework, the infinite-horizon problem is approximated by a sequence of finite-horizon problems defined over temporally overlapping intervals that cover the time domain \((0,\infty)\). Due to this structure, the resulting control acts as a feedback mechanism, offering an efficient strategy for addressing infinite-horizon problems governed by discrete-time \cite{grune2017nonlinear,GR08} and continuous-time systems \cite{RMD19,RA12}. 

Despite the flexibility of this open-loop optimization approach, establishing theoretical guarantees for stability and suboptimality remains a significant challenge. These issues are typically addressed by incorporating terminal costs and/or constraints, or by carefully designing the overlapping intervals. This framework has also been investigated for problems governed by partial differential equations (PDEs), see e.g., \cite{azmi2019hybrid,IK02}.

From a computational perspective, repeatedly solving PDE-constrained open-loop problems can be extremely costly, primarily due to the large state-space dimension resulting from PDE discretizations. This makes the standard RHC framework computationally expensive or even infeasible in practice. It is therefore essential to accelerate open-loop computations and improve their efficiency while preserving stability and suboptimality guarantees of RHC.

In this paper, we address the stabilization of linear time-varying parabolic PDEs with finite-dimensional controls by combining model reduction techniques and a rigorous analysis of the associated open-loop problems. First, we address the exponential stability and suboptimality of a continuous-time RHC framework in Hilbert spaces. In this setting, no terminal costs or constraints are required; instead, stability is ensured by selecting appropriate concatenation schemes. Furthermore, this framework allows the use of the squared \(\ell^1\)-norm as the control cost, leading to a non-smooth infinite-horizon problem that enforces sparsity in the control input. In the second part, we focus on improving computational efficiency using projection-based Galerkin model order reduction (MOR) techniques \cite{hesthaven2016certified,HKMU23}. Model order reduction (MOR) methods aim to accelerate computations by replacing the high-dimensional full-order model (FOM) with a low-dimensional reduced-order model (ROM). Galerkin ROMs, based on, e.g., Proper Orthogonal Decomposition (POD) \cite{KV02,GubV17,MU22}, are particularly effective for parabolic PDEs, as the Kolmogorov $n$-width can be expected to decay exponentially in this setting~\cite{cohen2011analytic,HKMU23}. However, simply applying a reduced feedback control (i.e., a control computed from the reduced model) to the FOM may compromise closed-loop stability. To address this issue, appropriate conditions on the ROM must be incorporated into the algorithm to ensure the stability of the FOM closed-loop system. In this paper, we build on a rigorous \emph{a posteriori} error estimator for the finite-horizon value function. Together with a Relaxed Dynamic Programming Principle (RDP) (see \cref{theorem:RelaxedDDP}), this allows us to show that the ROM-based RHC guarantees not only exponential stabilization of the FOM but also suboptimality with respect to the original infinite-horizon control problem.
\subsection{Related Works}
Due to its flexibility in handling constraints, non-autonomous dynamics, and non-linearities, RHC has received increased attention for the stabilization of PDE systems; see e.g., ~\cite{IK02,G09,BF20,azmi2019hybrid}. In this paper, we consider an unconstrained RHC framework in Hilbert spaces for stabilizing a general class of continuous-time, linear, time-varying parabolic equations, where the control enters as a linear combination of finitely many indicator functions. As already mentioned, this framework does not require any terminal cost or terminal constraints to guarantee stability. Similar approaches have been studied in the context of continuous-time ODEs \cite{RA12,JH05} and discrete-time dynamical systems~\cite{GR08,G09}.

Many works study the incorporation of RHC (MPC) in combination with MOR, see e.g., \cite{AV15,ARV25,GU14,kartmann2024certifiedmodelpredictivecontrol} and the references therein. Considerable effort has been devoted to establishing conditions under which the stability of FOM is preserved when the control is computed from ROMs. For finite-dimensional systems, we refer to~\cite{loehning2014model,LMFP22,APKCSZHP23}, where the error dynamics of the reduced system are explicitly incorporated into the MPC subproblem, and stability is ensured through suitable terminal conditions.
An alternative approach for discrete-time, unconstrained MPC applied to parameterized autonomous linear parabolic PDEs was proposed in \cite{dietze2023reduced}. In this contribution, a projection-based ROM is constructed offline using a greedy parameter selection strategy. The resulting ROM is then employed online to determine minimal stabilizing prediction horizons. Relying on \emph{a posteriori} error estimates for the value function, the FOM performance index can be estimated efficiently online. If the performance estimator remains positive throughout the RHC iterations, stability follows from the discrete-time RDP~\cite{grune2017nonlinear}. Our performance certification approach is inspired by~\cite{dietze2023reduced}, although we do not consider parameterized discrete-time systems but instead focus on an online-adaptive construction of the ROM for a continuous-time control problem with fixed parameters.

Owing to their structural properties, Galerkin ROMs provide a natural foundation for \emph{a posteriori} error estimation. This has been demonstrated in~\cite{dietze2023reduced,karcher2018certified,ali2020reduced} for optimality systems arising from parameterized control-constrained linear-quadratic elliptic (or discrete-time parabolic) optimal control problems, and in~\cite{ran,vexler2008adaptive} for value function estimation within an adaptive finite element framework. Moreover, efficient MPC implementations for parabolic PDEs with adaptive grid refinement in space or time have also been studied; see, e.g.,~\cite{GSS22,AGH22}.
\subsection{Contributions}
Based on the above discussion, our main contributions can be summarized as follows:
\begin{enumerate}
%

    \item {We establish exponential stability and suboptimality of continuous-time RHC for time-varying linear parabolic systems with a weaker stage cost, namely an $L^2(\Omega)$-tracking term; see \cref{theorem:stabilityRHC}. By contrast, the exponential stability analysis in \cite[Prop.~2.1 and Thm.~6.2]{azmi2019hybrid} requires an $H^1(\Omega)$-tracking term in order to invoke an \emph{initial-time} observability estimate (see \cite[Assumption~\textbf{P3} in Section~2]{azmi2019hybrid}). For an $L^2(\Omega)$-tracking term, \cite[Thm.~6.4]{azmi2019hybrid} establishes only asymptotic stability. Our analysis does not rely on an initial-time observability condition. Instead, we employ a \emph{final-time} observability inequality, namely \cref{e8a}. As a consequence, and in comparison with \cite{azmi2019hybrid,RA12}, we obtain a new explicit bound for the suboptimality (performance) parameter.}

    \item Extending the results of \cite{ran,dietze2023reduced}, we derive rigorous \textit{a posteriori} error estimates for the ROM approximation of the value function and the cost functional, applicable to general non-smooth convex control costs (including sparsity-promoting regularization and convex control constraints), and to inexact initial values {in} \cref{lem:errorest_optimalval_diff_initguess_prepare}.
    Furthermore, we provide error and residual equivalences, establish interpolation properties of the reduced optimality system, and asymptotic convergence of Galerkin ROMs under relaxed regularity assumptions in \cref{theo:os_conv}.
    
    \item We propose a ROM-RHC scheme that combines the \emph{a posteriori} error estimates with a new variant of the RDP principle for time-varying continuous-time systems (see \cref{theorem:RelaxedDDP}), obtaining exponentially stabilizing controls with user-specified minimal performance guarantees relative to the infinite-horizon FOM value function (see \cref{theorem:StabilityReducedRHC}). 
    In particular, we show that one can always construct the ROM such that the RDP inequality is satisfied with a finite-dimensional reduced basis, thereby ensuring suboptimality with a performance parameter strictly smaller than that of the FOM. The results on error estimation, stability, and suboptimality are independent of the specific Galerkin method used for the numerical realization and apply both in MOR and adaptive finite element frameworks.
    
    \item We provide numerical experiments on systems with exponentially unstable free dynamics, demonstrating that ROM-RHC with squared $\ell_1$-regularization achieves substantial speed-ups over FOM-RHC while preserving the stability and suboptimality guarantees of \cref{theorem:StabilityReducedRHC}.     
 \end{enumerate}
\subsection{Outline}
The remainder of this paper is organized as follows. In \cref{sec:FOM_stability_RHC}, we present the RHC scheme and establish its suboptimality and exponential stability for the FOM. Section~\ref{sec:ROM} focuses on the ROM, where we derive \emph{a posteriori} error estimates, present convergence results, and analyze their relation to the true error. Building on these results, \cref{sec:stabROMRHC} develops the relaxed stability framework, yielding a certified ROM-RHC scheme with guaranteed stability. An illustrative example demonstrating the applicability of the framework is given in \cref{sec:example}, and numerical experiments validating the approach are presented in \cref{sec:NUMEXP}.
\subsection{Notation and preliminaries}
We denote by $\R_{>0}$ ($\R_{\geq 0}$) the set of positive (non-negative) real numbers.
Throughout the paper, let $V \hookrightarrow  H=H' \hookrightarrow  V'$ be a Gelfand triple of separable Hilbert spaces $H$ and $V$ with $V$ compactly and densely embedded in $H$. The space $\mathcal L(V,V')$ represents the Banach space of linear and bounded operators from $V$ to $V'$. For $\tint \in \R_{\ge0}$ and $T\in \R_{> 0}\cup\{\infty\}$, we define the control and state spaces as
\begin{align*}
    \mU_T(\tint)&\coloneqq L^2(\tint,\tint+T;U)\quad\text{for }U=\R^m\text{ and }m\in\N,\\
    \mY_T(\tint)&\coloneqq W(\tint,\tint+T;V)\coloneqq\big\{\varphi\in L^2(\tint,\tint+T;V)\,\big|\,\partial_t\varphi\in L^2(\tint,\tint+T;V')\big\}
\end{align*}
with induced norm $\norm{y}{\mY_T(\tint)}^2= \norm{y}{L^2(\tint,\tint+T;V)}^2+\norm{\partial_t y}{{L^2(\tint,\tint+T;V')}}^2$. Recall that $\mY_T(\tint)\hookrightarrow C([\tint,\tint+T];H)$ for $T<\infty$ holds. If it is clear from the context, we abbreviate $L^p(\tint,\tint+T;C)$ by $L^p(C)$ or simply $L^p$ for a Hilbert space $C$ and $p\in [1,\infty]$. Dependence on data is indicated after a semicolon. For instance, we write $y(t;\bu,\tint,\yint)$ for a state variable $y(t)$ depending on time $t$, control $\bu$, and initial value $\yint$ at initial time $\tint$. If it is clear from the context we write, e.g., only $y(\bu)$ to abbreviate $y(\bu,\tint,\yint)$. The subscript (or superscript) $ r$ indicates reduced quantities. For example, we denote a reduced subspace by $V_r\subset V$, and $\reduce{y}\in\mY_T^r(\tint)\coloneqq W(\tint,\tint+T;V_r)$ denotes a reduced state in the reduced state space.
%
\section{The full-order model and suboptimality of RHC}\label{sec:FOM_stability_RHC}
%
For the triple $(T,\tint,\yint)\in {({\R_{>0}\cup \{ \infty \})}}\times\R_{\ge0} \times H$ and a control $\bu\in \mU_T(\tint)$, consider the linear time-varying system
\begin{align}
    \label{eq:LTV}
    \tag{$\mathrm{FOM}_T(\tint,\yint)$}
    \ddt y(t)+A(t)y(t)=B(t)\bu(t), \quad t \in  ( \tint,\tint+T),\quad y(\tint)=\yint.
\end{align}
We refer to \eqref{eq:LTV} as the \emph{full-order model} (FOM). {Note that in the MOR literature, the term FOM is used to indicate a discrete (e.g., FE) model of high dimension. For notational convenience, we do not distinguish between $V$ being a continuous infinite-dimensional space used for theoretical considerations or a high-dimensional FE space used for numerical computations in \cref{sec:NUMEXP}.}
\begin{assumption}\label{ass:pde}
    We assume that $A\in L^\infty(0,\infty;\mathcal{L}(V,V'))$, $B \in  L^\infty(0,\infty;\mathcal{L}(U,V'))$
     and the existence of constants $\eta_V>0$, $\eta_H\geq 0$ such that
    \begin{align}\label{eq:weak_coercivity}
        \scalprod{A(t)v}{v}{V',V} \geq \eta_V\norm{v}{V}^2 - \eta_H\norm{v}{H}^2 \quad \text{for all } v\in V,\  t\geq 0.
    \end{align}
\end{assumption}
Then, for all $T\in\R_{>0}$ and $\bu\in \mU_T(\tint)$, there exists a unique solution $y=y(\bu,\tint,\yint)\in\mY_T(\tint)$; cf. \cite{LioM72}. To specify the optimal control problems, we introduce the cost functional
\begin{equation}\label{eq:cost}\nonumber
    J_T(\bu;\tint,\yint)\coloneqq\int^{\tint+T}_{\tint} \ell(y(t;\bu,\tint,\yint),\bu(t))\ \dt.
\end{equation}
\begin{assumption}\label{ass:running_cost}
The incremental function $\ell: H\times U \to \R_{\geq 0}$ is given as
\begin{equation}
\label{eq:ell_property}
\ell(\varphi,\bv)\coloneqq\tfrac{1}{2}\norm{ \varphi}{H}^2+\tfrac{\lambda}{2}\norm{\bv}{U}^2+g(\bv)\quad\text{for }\,(\varphi,\bv)\in H \times U\text{ and }\lambda>0.
\end{equation}
Hereby, $g:U \to \R_{\geq 0}\cup \{\infty\}$ is proper, convex, and lower-semicontinuous with $g(0)=0$.
\end{assumption}
The functions $g$ satisfying \cref{ass:running_cost} include, for example, the indicator functions of the convex control constraints or the sparsity-promoting terms. To find stabilizing controls for an initial value $y_0\in H$, we study the infinite-horizon problem
\begin{equation}
    \label{eqn:infhorizonproblem}
    \tag{$\text{OP}_{\infty}(y_0)$}
    \min J_\infty(\bu;0,y_0)\quad\text{subject to (s.t.)}\quad\bu\in\mU_\infty(0).
\end{equation}
To address \eqref{eqn:infhorizonproblem}, we employ a receding-horizon scheme, approximating the infinite-horizon problem by concatenating finite-horizon optimal control problems with prediction horizon $T>0$ of the form
\begin{equation}
\label{eqn:fin_hor_ocp}
\tag{$\text{OP}_{T}(\tint,\yint)$}
\min J_T(\bu;\tint,\yint)\quad\text{s.t.}\quad \bu\in\mU_T(\tint).
\end{equation}
Under \cref{ass:pde,ass:running_cost}, the direct method in the calculus of variations ensures that \eqref{eqn:fin_hor_ocp} admits a unique solution $(\bar \bu(\tint,\yint), \bar y(\tint,\yint))$, see, e.g. \cite{hinze2008optimization}. For brevity, we set
\begin{equation}\nonumber
    \bar \ell(t;\tint,\yint)\coloneqq \ell( \bar y(t;\tint,\yint)),\bar \bu(t;\tint,\yint))\quad\text{for }t\in(\tint,\tint+T).
\end{equation}
For a \emph{sampling time} $\delta>0$, we define grid points $t_k = k\delta$ for $k\in\N_0$. At each $t_k$, problem \eqref{eqn:fin_hor_ocp} is solved with $\tint = t_k$, and the solution is applied up to $t_{k+1} = t_k+\delta$, yielding a new initial value $\yint$ for the next step. This procedure is summarized in \cref{algo:FOMRHC}.
\begin{algorithm}[htbp!]
\caption{RHC($\delta,T$)}\label{algo:FOMRHC}
\begin{algorithmic}[1]
\REQUIRE{Final time $T_{\infty} \in \mathbb{R}_{\geq 0} \cup \{ \infty\}$, sampling time $\delta>0$,  prediction horizon $T\geq \delta$, initial value $y_0\in H$;}
\ENSURE{RHC~$\mathbf{u}_{rh}$, non-decreasing sequence~$\{ t_k \}_{k\in\mathbb{N}}$.}
\STATE Set~$(\tint,\yint)\coloneqq(0,y_0)$, $y_{rh}(\tint)\coloneqq y_0$, $k\coloneqq 0$, and $t_0\coloneqq 0$;
\WHILE{$ \tint <T_\infty$}
\STATE Find the optimal solution $(\bar \bu(\cdot\,;\tint,\yint)$, $\bar y(\cdot\,;\tint,\yint))$
by solving~\eqref{eqn:fin_hor_ocp};
\STATE For $\tau\in[\tint,\tint+\delta)$ set $y_{rh}(\tau)\coloneqq  \bar y(\tau;\tint,\yint)$, and $\mathbf{u}_{rh}(\tau)\coloneqq \bar \bu(\tau;\tint,\yint)$;
\STATE Update $k\gets k+1$;  $t_k \gets \tint +\delta$; $(\tint,\yint)\leftarrow(t_k,\bar y(t_k;\tint,\yint))$;
\ENDWHILE
\end{algorithmic}
\end{algorithm}
%
\subsection{Suboptimality and Stability of RHC}
In this section, we address the exponential stabilizability and suboptimality of the RHC obtained from \cref{algo:FOMRHC} for {the} FOM. The suboptimality is measured in terms of the value function, for both the finite- and infinite-horizon cases, as defined below.
\begin{definition}[Value function]
For any $y_0 \in H$, the \emph{infinite-horizon value function} $V_{\infty}: H \to \R_{\geq 0}$ is defined by
\begin{equation*}
    V_{\infty}(y_0)\coloneqq \inf \big\{J_{\infty}(\bu;0,y_0)\,\big|\,\bu\in \mU_\infty(0)\big\}.
\end{equation*}
Similarly, for every $(T,\tint,\yint)\in{\R_{>0}\times\R_{\ge0}} \times H$, the \emph{finite-horizon value function} $V_{T}:\R_{\ge0} \times H \to \R_{\ge0}$ is given as
\begin{equation*}
    V_{T}(\tint,\yint)\coloneqq  \min \big\{J_{T}(\bu;\tint,\yint)\,\big|\,\bu\in \mU_T(\tint)\big\}.
\end{equation*}
\end{definition}
The following assumption is essential for establishing the stability and suboptimality of RHC. It requires that the optimal costs for both the finite- and infinite-horizon problems are uniformly bounded with respect to the $H$-norm of the initial condition.
\begin{assumption}\label{ass:1}
For every $T>0$, $V_T$ is \emph{globally decrescent} with respect to the $H$-norm, that is, there exists a continuous, bounded, and non-decreasing function $\gamma$ with
        \begin{align}\label{e7}
            V_T(\tint,\yint)\leq \gamma(T)\norm{\yint}{H}^2\quad\text{for all }(\tint,\yint)\in\R_{\ge0}\times H.
        \end{align}
\end{assumption}
We begin with a set of auxiliary lemmas that are essential for our main results.
\begin{lemma}\label{ass:2}
    Let \cref{ass:pde,ass:running_cost} be valid. For every $(T,\tint ,\yint) \in \R_{>0}\times \R_{\geq 0} \times  H$ and $\bu \in \mU_T(\tint)$, we have for $y=y(\bu,\tint,\yint)$ solving \eqref{eq:LTV} that
    \begin{align}
        &\norm{y}{C([\tint,\tint+T];H)}^2+\norm{y}{L^2(\tint,\tint+T;V)}^2  \leq C_1 \left(  \norm{\yint}{H}^2 +  J_{T}(\bu; \tint, \yint)  \right)\label{e8},\\
        &\norm{y(\tint+T)}{H}^2 \leq C_2(T) J_{T}(\bu; \tint, \yint)\label{e8a},
    \end{align}
    where $C_1$ and  $C_2$  are independent of $(T, \tint, \yint, \bu)$ and $( \tint,\yint, \bu)$, respectively.
\end{lemma}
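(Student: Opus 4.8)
The plan is to run the classical parabolic energy estimate obtained by testing the weak form of \eqref{eq:LTV} with the solution itself, and then to exploit the fact that the tracking term $\tfrac12\norm{\varphi}{H}^2$ built into $\ell$ already bounds $\int\norm{y}{H}^2\dt$ by $J_T$; this is what lets the constants stay independent of $T$ without invoking Grönwall's lemma.

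First I fix $\bu\in\mU_T(\tint)$ and the associated state $y=y(\bu,\tint,\yint)$. Testing the equation with $y(t)\in V$, using the standard identity $\scalprod{\ddt y(t)}{y(t)}{V',V}=\tfrac12\tfrac{\mathrm d}{\mathrm d t}\norm{y(t)}{H}^2$ valid on $W(\tint,\tint+T;V)$ (cf.\ \cite{LioM72}), the weak coercivity \eqref{eq:weak_coercivity}, and the bound $\norm{B(t)\bu(t)}{V'}\le\norm{B}{L^\infty}\norm{\bu(t)}{U}$, an application of Young's inequality to absorb $\tfrac{\eta_V}{2}\norm{y}{V}^2$ gives the differential inequality
\begin{equation}\nonumber
\tfrac{\mathrm d}{\mathrm d t}\norm{y(t)}{H}^2+\eta_V\norm{y(t)}{V}^2\le 2\eta_H\norm{y(t)}{H}^2+\tfrac{\norm{B}{L^\infty}^2}{\eta_V}\norm{\bu(t)}{U}^2
\end{equation}
for a.e.\ $t\in(\tint,\tint+T)$.

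The key observation is that, since $g\ge0$, the form \eqref{eq:ell_property} of $\ell$ yields $J_T(\bu;\tint,\yint)\ge\tfrac12\int_{\tint}^{\tint+T}\norm{y}{H}^2\dt+\tfrac{\lambda}{2}\int_{\tint}^{\tint+T}\norm{\bu}{U}^2\dt$, so both $\int\norm{y}{H}^2\dt$ and $\int\norm{\bu}{U}^2\dt$ are bounded by multiples of $J_T$ with constants free of $(T,\tint,\yint,\bu)$. For \eqref{e8} I integrate the differential inequality from $\tint$ to an arbitrary $s\in[\tint,\tint+T]$: the $\norm{\yint}{H}^2$ contribution comes from the left-hand endpoint, while the two right-hand integrals are replaced by their $J_T$-bounds. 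Taking the supremum over $s$ controls $\norm{y}{C([\tint,\tint+T];H)}^2$ and taking $s=\tint+T$ controls $\eta_V\norm{y}{L^2(\tint,\tint+T;V)}^2$; summing gives \eqref{e8} with $C_1$ depending only on $\eta_V,\eta_H,\lambda,\norm{B}{L^\infty}$. Crucially no factor $e^{2\eta_H T}$ enters, because $\int\norm{y}{H}^2\dt$ is bounded directly through $J_T$ rather than via Grönwall.

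For \eqref{e8a} the obstacle is to delete the $\norm{\yint}{H}^2$ term from the right-hand side. I resolve this by a mean-value argument: from $\int_{\tint}^{\tint+T}\norm{y}{H}^2\dt\le 2J_T$ there is a time $t^\ast\in[\tint,\tint+T]$ with $\norm{y(t^\ast)}{H}^2\le\tfrac{2}{T}J_T$. Integrating the differential inequality from $t^\ast$ to $\tint+T$ (rather than from $\tint$), discarding the nonnegative $\eta_V\norm{y}{V}^2$ term, and inserting $\norm{y(t^\ast)}{H}^2\le\tfrac2T J_T$ together with the $J_T$-bounds on the remaining integrals, I obtain $\norm{y(\tint+T)}{H}^2\le C_2(T)J_T$ with $C_2(T)=\tfrac2T+4\eta_H+\tfrac{2\norm{B}{L^\infty}^2}{\eta_V\lambda}$, independent of $(\tint,\yint,\bu)$ but $T$-dependent through the $\tfrac2T$ term. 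This final step --- trading the initial-value dependence for the $T$-dependent factor $\tfrac2T$ by starting the energy balance at a well-chosen intermediate time --- is the main point requiring care.
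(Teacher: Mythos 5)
Your proof is correct, and it splits into two parts relative to the paper. For \eqref{e8} you do essentially what the paper does: test with $y(t)$, use weak coercivity and Young's inequality, and bound $\int\norm{y}{H}^2\dt$ and $\int\norm{\bu}{U}^2\dt$ directly by multiples of $J_T$ (exploiting $g\ge 0$ and the structure of $\ell$), so that no Gr\"onwall factor $e^{cT}$ appears and $C_1$ is $T$-independent. For \eqref{e8a}, however, you take a genuinely different route. The paper removes the initial-value term by testing the equation with the time-weighted function $\tfrac{t-\tint}{T}\,y(t)$, which vanishes at $t=\tint$; integration by parts of $\langle \dot y,\tfrac{t-\tint}{T}y\rangle_{V',V}$ then produces exactly the boundary term $\tfrac12\norm{y(\tint+T)}{H}^2$ minus $\tfrac{1}{2T}\int\norm{y}{H}^2\dt$, yielding $C_2(T)=\max\{\nicefrac{2\norm{B}{L^\infty}^2}{\lambda\eta_V},\,4\eta_H,\,\nicefrac{2}{T}\}$. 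You instead pick, by a mean-value argument on the continuous function $t\mapsto\norm{y(t)}{H}^2$, an intermediate time $t^\ast$ with $\norm{y(t^\ast)}{H}^2\le\tfrac2T J_T$ and restart the energy balance there; this is an entirely legitimate alternative (continuity of $\norm{y(\cdot)}{H}^2$ follows from $\mY_T(\tint)\hookrightarrow C([\tint,\tint+T];H)$, and the differential inequality is integrable since $\norm{y(\cdot)}{H}^2$ is absolutely continuous). The paper's weighted test function can be viewed as performing your averaging step continuously, in one computation, and gives the slightly sharper max-form constant, whereas your argument is more elementary (no auxiliary test function or extra integration by parts) at the price of the sum-form constant $C_2(T)=\tfrac2T+4\eta_H+\tfrac{2\norm{B}{L^\infty}^2}{\eta_V\lambda}$. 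Both constants have the properties actually used later in the paper: independence of $(\tint,\yint,\bu)$ and blow-up as $T\to 0$.
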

\begin{proof}
    Testing \eqref{eq:LTV} with $y(t)$, and  integrating over $(\tint,\tint+T)$ leads with \eqref{eq:weak_coercivity} and Young's inequality to
    \begin{align*}\nonumber
        \norm{y(t)}{H}^2+\eta_V\norm{y}{L^2(\tint,\tint + T;V)}^2 &\leq \norm{y(\tint)}{H}^2+\tfrac{\norm{B}{L^\infty}^2}{\eta_V}\norm{\bu}{\mU_T(\tint)}^2+2\eta_H\norm{y}{L^2(\tint,\tint + T;H)}^2\\
        & \leq\norm{y(\tint)}{H}^2+\max\Big\{\tfrac{2\norm{B}{L^\infty}^2}{\lambda\eta_V}, 4\eta_H \Big\} J_T(\bu;\tint,\yint)
    \end{align*}
    with $|B|_{L^\infty}\coloneqq |B|_{L^\infty(0,T;\mathcal{L}(U,V'))}$. Hence, \eqref{e8} holds with $C_1=\tfrac{\max\{1,\nicefrac{2\norm{B}{L^\infty}^2}{\lambda\eta_V},4\eta_H\}}{ \min\{1,\eta_V\}}$. Turning to \eqref{e8a}, we test \eqref{eq:LTV} with $(\nicefrac{t-\tint}{T})y(t)$ to get f.a.a. $t\in (\tint,\tint+T)$
    \begin{align*}
        \big\langle\dot y(t),\tfrac{t-\tint}{T}y(t)\big\rangle_{V',V}+\Big\langle A(t)\sqrt{\tfrac{t-\tint}{T}}y(t),\sqrt{\tfrac{t-\tint}{T}}y(t)\Big\rangle_{V',V}= \big\langle B(t)\bu(t),{\tfrac{t-\tint}{T}}y(t)\big\rangle_{V',V}.
    \end{align*}
    Integrating over $(\tint,\tint+T)$ and using partial integration for the first term
    \begin{equation*}
        \int_{\tint}^{\tint+T}\big\langle\dot y(t),\tfrac{t-\tint}{T}y(t)\big\rangle_{V',V} = \tfrac{1}{2}\norm{y(\tint+T)}{H}^2-\tfrac{1}{2T} \int_{\tint}^{\tint+T}\norm{y(t)}{H}^2\ \dt.
    \end{equation*}
    Together with \eqref{eq:weak_coercivity}, Young's inequality, and $\sqrt{(\nicefrac{t-\tint}{T})}\leq 1 $, this leads to
    \begin{align*}
        \norm{y(\tint+T)}{H}^2&\leq \tfrac{\norm{B}{L^\infty}^2}{\eta_V}\norm{\bu}{\mU(\tint)}^2+\max\left\{2\eta_H, \tfrac{1}{T}\right\}\norm{y}{L^2(\tint,\tint+T;H)}^2\leq C_2(T) J_T(\bu;\tint,\yint)
    \end{align*}
    with $C_2(T)\coloneqq \max\{\nicefrac{2\norm{B}{L^\infty}^2}{\lambda\eta_V},4\eta_H, \nicefrac{2}{T}\}$.
\end{proof}
\begin{lemma}\label{lem2}
    If \cref{ass:pde,ass:running_cost,ass:1} hold and $T>\delta>0$, then for every  $(\tint,\yint) \in  \mathbb{R}_{\geq 0} \times H$ {it holds
    \begin{align}
        \label{lem2c1}
        &\begin{aligned}
            V_T(\tint+\delta, \bar y(\tint+\delta;\tint, \yint))\leq\int^{\tint+s}_{\tint+\delta} \bar \ell(t;\tint, \yint)\ \dt+\gamma(T+\delta-s)\norm{\bar y(\tint+ s;\tint, \yint)}{H}^2 \quad \text{for } s \in [\delta, T],
        \end{aligned}
        \end{align}
        and
        \begin{align}
        \label{lem2c2}
        &\int^{\tint+T}_{\tint+s} \bar\ell(t;\tint, \yint)\ \dt \leq\gamma(T-s)\norm{\bar y(\tint+s;\tint, \yint)}{H}^2 \quad \text{for }s \in [0, T].
    \end{align}}
\end{lemma}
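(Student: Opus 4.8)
The plan is to obtain both estimates from the optimality of the finite-horizon pair $(\bar\bu(\cdot\,;\tint,\yint),\bar y(\cdot\,;\tint,\yint))$ together with the decrescence bound of \cref{ass:1}, exploiting two structural facts: the cost $J_T$ is additive over adjacent time subintervals, and by the unique solvability stated after \cref{ass:pde} a control together with an initial datum determines the state uniquely. Consequently, concatenating two controls produces an admissible control whose trajectory coincides piecewise with those of the two pieces, the junction value being well defined because of the continuous embedding $\mY_T(\tint)\hookrightarrow C([\tint,\tint+T];H)$. Throughout I abbreviate $\bar\bu=\bar\bu(\cdot\,;\tint,\yint)$, $\bar y=\bar y(\cdot\,;\tint,\yint)$, and $\bar\ell(t)=\bar\ell(t;\tint,\yint)$.

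To prove \eqref{lem2c2}, I would fix $s\in[0,T)$ and let $\bu^\ast$ be the optimal control of $\text{OP}_{T-s}(\tint+s,\bar y(\tint+s))$, so that its cost equals $V_{T-s}(\tint+s,\bar y(\tint+s))$. Defining $\bw\in\mU_T(\tint)$ to equal $\bar\bu$ on $(\tint,\tint+s)$ and $\bu^\ast$ on $(\tint+s,\tint+T)$, uniqueness forces the state driven by $\bw$ from $(\tint,\yint)$ to agree with $\bar y$ on $[\tint,\tint+s]$ and with the optimal trajectory of the subproblem afterwards, whence $J_T(\bw;\tint,\yint)=\int_{\tint}^{\tint+s}\bar\ell(t)\,\dt+V_{T-s}(\tint+s,\bar y(\tint+s))$. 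Optimality of $\bar\bu$ gives $V_T(\tint,\yint)=\int_{\tint}^{\tint+T}\bar\ell(t)\,\dt\le J_T(\bw;\tint,\yint)$; subtracting the common term $\int_{\tint}^{\tint+s}\bar\ell(t)\,\dt$ and invoking \cref{ass:1} to bound $V_{T-s}$ yields \eqref{lem2c2}. The endpoint $s=T$ is immediate, since the left-hand side vanishes and $\gamma\ge0$.

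To prove \eqref{lem2c1}, I would instead view the right-hand side as a feasible cost for the shifted problem on $[\tint+\delta,\tint+\delta+T]$ with initial datum $\bar y(\tint+\delta)$. For $s\in[\delta,T]$ I would build $\bw$ by taking $\bar\bu$ on $(\tint+\delta,\tint+s)$, admissible because $\delta\le s\le T$, and the optimal control of $\text{OP}_{T+\delta-s}(\tint+s,\bar y(\tint+s))$ on $(\tint+s,\tint+\delta+T)$, whose horizon $T+\delta-s$ is positive since $s\le T$. As before, uniqueness makes the driven state equal $\bar y$ on $[\tint+\delta,\tint+s]$, so the cost of $\bw$ splits as $\int_{\tint+\delta}^{\tint+s}\bar\ell(t)\,\dt+V_{T+\delta-s}(\tint+s,\bar y(\tint+s))$. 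Minimality of the value function gives $V_T(\tint+\delta,\bar y(\tint+\delta))\le J_T(\bw;\tint+\delta,\bar y(\tint+\delta))$, and bounding the tail term by \cref{ass:1} produces exactly \eqref{lem2c1}.

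Both arguments are conceptually a truncation-and-reoptimization (dynamic programming) step, so the real work is bookkeeping rather than analysis. I expect the only delicate points to be keeping the shifted initial times and the three horizons $T-s$, $T$, and $T+\delta-s$ mutually consistent across the two concatenations, verifying that each constructed control lies in the correct $L^2$ space, and matching the trajectories at the junction $\tint+s$ via the embedding into $C([\cdot];H)$; the boundary cases $s=\delta$ and $s=T$, where one integral collapses, are handled by the continuity of $\gamma$ and by $\gamma\ge0$.
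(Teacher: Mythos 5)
Your proof is correct: both inequalities follow, exactly as you argue, by concatenating the restriction of $\bar \bu$ with the optimal control of the shifted subproblem (using uniqueness of the state to split the cost at the junction) and then bounding the tail value function $V_{T-s}$, respectively $V_{T+\delta-s}$, via \cref{ass:1}. The paper itself only cites \cite[Lemma 2.3]{azmi2019hybrid} for this statement, and the proof there is the same truncation-and-reoptimization (dynamic programming) argument, so your approach coincides with the paper's.
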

\begin{proof}
The proof has been given in \cite[Lemma 2.3]{azmi2019hybrid}. 
\end{proof}
\begin{lemma}\label{lem3}
    Suppose that \cref{ass:pde,ass:running_cost,ass:1} hold. Then for $ (\tint,\yint) \in  \mathbb{R}_{\geq 0} \times H$, $T>\delta>0$,
    and the choice of
    \begin{equation*}
    \theta_1=\theta_1(T,\delta) \coloneqq \gamma(T-\delta)C_2(\delta) \quad \text{and}\quad \theta_2=\theta_2(T,\delta) \coloneqq
    \frac{\gamma(T)C_1(C_2(\delta)+\theta_1)}{T-\delta},
    \end{equation*}
    we have the following estimates
    \begin{align}
        \label{e23}
        &\int^{ \tint+T}_{\tint+\delta} \bar \ell(t; \tint, \yint)\ \dt \leq \theta_1 \int^{\tint+\delta}_{\tint} \bar \ell(t; \tint, \yint)\ \dt,\\
        \label{e24}
        &\begin{aligned}
            V_T(\tint+\delta,\bar y(\tint+\delta;\tint, \yint))\quad\leq \int^{\tint+T}_{\tint+\delta} \bar\ell(t;\tint, \yint)\ \dt + \theta_2\int_{\tint}^{\tint+\delta} \bar\ell(t;\tint, \yint)\ \dt.
        \end{aligned}
    \end{align}
\end{lemma}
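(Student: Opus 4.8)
The plan is to derive both estimates by combining the a priori bounds \eqref{e8}--\eqref{e8a} of \cref{ass:2} with the decay inequalities \eqref{lem2c1}--\eqref{lem2c2} of \cref{lem2}, exploiting one structural observation: the optimal pair $(\bar\bu,\bar y)=(\bar\bu(\tint,\yint),\bar y(\tint,\yint))$ for \eqref{eqn:fin_hor_ocp}, when restricted to a subinterval $[\tint+a,\tint+b]\subset[\tint,\tint+T]$, is itself the solution of \eqref{eq:LTV} on that subinterval with initial value $\bar y(\tint+a)$ and control $\bar\bu|_{[\tint+a,\tint+b]}$. Hence \eqref{e8} and \eqref{e8a} may be applied on any such subinterval, with the horizon $T$ replaced by its length $b-a$ and $J_T$ replaced by $\int_{\tint+a}^{\tint+b}\bar\ell(t)\,\dt$.

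For \eqref{e23}, I would start from \eqref{lem2c2} evaluated at $s=\delta$, which gives $\int_{\tint+\delta}^{\tint+T}\bar\ell(t)\,\dt\le\gamma(T-\delta)\,\norm{\bar y(\tint+\delta;\tint,\yint)}{H}^2$. It then remains only to control the terminal value: applying \eqref{e8a} on $[\tint,\tint+\delta]$ (horizon $\delta$, initial value $\yint$) yields $\norm{\bar y(\tint+\delta)}{H}^2\le C_2(\delta)\int_\tint^{\tint+\delta}\bar\ell(t)\,\dt$, and multiplying through produces exactly $\theta_1=\gamma(T-\delta)C_2(\delta)$.

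For \eqref{e24}, the factor $1/(T-\delta)$ in $\theta_2$ signals that one should \emph{integrate} \eqref{lem2c1} over $s\in[\delta,T]$ and divide by $T-\delta$, rather than pick a single $s$. On the right-hand side the inner integral is bounded by $\int_{\tint+\delta}^{\tint+T}\bar\ell(t)\,\dt$ for every $s\le T$ (since $\bar\ell\ge0$), while monotonicity of $\gamma$ gives $\gamma(T+\delta-s)\le\gamma(T)$ for $s\ge\delta$; after dividing by $T-\delta$ this leaves the state term $\tfrac{\gamma(T)}{T-\delta}\int_{\tint+\delta}^{\tint+T}\norm{\bar y(t)}{H}^2\,\dt$. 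The crux is to absorb this $L^2$-in-time state norm into $\int_\tint^{\tint+\delta}\bar\ell(t)\,\dt$: using the continuous embedding $V\hookrightarrow H$ together with \eqref{e8} on $[\tint+\delta,\tint+T]$ (horizon $T-\delta$, initial value $\bar y(\tint+\delta)$), one obtains $\int_{\tint+\delta}^{\tint+T}\norm{\bar y(t)}{H}^2\,\dt\le C_1\big(\norm{\bar y(\tint+\delta)}{H}^2+\int_{\tint+\delta}^{\tint+T}\bar\ell(t)\,\dt\big)$. Estimating the first term in parentheses by $C_2(\delta)\int_\tint^{\tint+\delta}\bar\ell(t)\,\dt$ (via \eqref{e8a}) and the second by $\theta_1\int_\tint^{\tint+\delta}\bar\ell(t)\,\dt$ (via the just-proved \eqref{e23}) yields the factor $C_1(C_2(\delta)+\theta_1)$ and hence $\theta_2$.

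The one genuinely non-routine step is the decision to average \eqref{lem2c1} over $s$: this simultaneously generates the $1/(T-\delta)$ prefactor and, more importantly, replaces the pointwise terminal value $\norm{\bar y(\tint+s)}{H}^2$ by the $L^2(\tint+\delta,\tint+T;H)$-norm of the state, which — unlike a pointwise value — is directly controllable through \eqref{e8} on the tail interval. Everything else is careful bookkeeping of the horizon-dependent constant $C_2$ (evaluated at length $\delta$ for the terminal bound) and elementary monotonicity and integration estimates.
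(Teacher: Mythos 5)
Your proof is correct and follows essentially the same route as the paper: \eqref{lem2c2} at $s=\delta$ combined with \eqref{e8a} on $[\tint,\tint+\delta]$ gives \eqref{e23}, and \eqref{lem2c1} is reduced to the tail cost plus $\tfrac{\gamma(T)}{T-\delta}\norm{\bar y}{L^2(\tint+\delta,\tint+T;H)}^2$, which is then absorbed via \eqref{e8}, \eqref{e8a}, and the just-proved \eqref{e23}, yielding the same constants $\theta_1,\theta_2$. The only cosmetic difference is that you average \eqref{lem2c1} over $s\in[\delta,T]$, whereas the paper evaluates it at the minimizer $\hat t$ of $s\mapsto\norm{\bar y(\tint+s;\tint,\yint)}{H}^2$ and bounds that minimum by the mean---two equivalent ways of producing the $\tfrac{1}{T-\delta}$ prefactor.
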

\begin{proof}
    To verify the inequality \eqref{e23}, we can write by \eqref{lem2c2} that 
    \begin{equation}
        \begin{split}
            \int^{ \tint+T}_{\tint+\delta} \bar \ell(t;\tint, \yint)\ \dt  &\hspace{-1.5mm}   \stackrel{\text{\eqref{lem2c2}}}{\leq} 
            \gamma(T-\delta) \norm{\bar y(\tint+\delta;\tint, \yint)}{H}^2 \\
            &\hspace{-1.5mm}   \stackrel{\text{\eqref{e8a}}}{\leq} \gamma(T-\delta)C_2(\delta) \int_{ \tint}^{\tint+\delta} \bar \ell(t;\tint, \yint)\ \dt,
        \end{split}
    \end{equation}
    concluding \eqref{e23}. Turning to \eqref{e24}, recall that $ \bar y(\cdot\,;\tint, \yint) \in C([\tint,\tint+T];H)$. Hence, there is a $\hat{t}\in [\delta, T]$ such that $\hat{t}= \argmin_{t \in [\delta, T]}  \norm{\bar y(\tint+t ;\tint, \yint)}{H}^2$. By \eqref{lem2c1}, we have using that $\gamma$ is non-decreasing by \cref{ass:1}
    \begin{equation}
        \label{e15}
        \begin{split}
        &V_T( \tint+\delta, \bar y(\tint+\delta; \tint, \yint))\\
        &\hspace{10mm}\stackrel{\text{\eqref{lem2c1}}}{\leq}\int^{\tint+\hat{t}}_{\tint+\delta} \bar \ell(t; \tint, \yint)\ \dt+\gamma(T+\delta-\hat{t}) \norm{ \bar y(\tint+\hat{t}; \tint, \yint)}{H}^2\\
        &\hspace{11.5mm}\leq \int^{\tint+\hat{t}}_{ \tint+\delta} \bar \ell(t;\tint, \yint)\ \dt+\gamma(T)\norm{\bar y(\tint+\hat t;\tint, \yint)}{H}^2\\
        &\hspace{11.5mm}\leq \int^{\tint+T}_{\tint+\delta} \bar \ell(t;\tint, \yint)\ \dt+\tfrac{\gamma(T)}{T-\delta}\norm{\bar y(\cdot;\tint, \yint)}{L^2(\tint+\delta,\tint+T;H)}^2.
        \end{split}
    \end{equation}
    Together with \cref{ass:2}
    \begin{align*}
        &\norm{\bar y(\cdot;\tint, \yint)}{L^2(\tint+\delta,\tint+T;H)}^2\leq \norm{\bar y(\cdot;\tint, \yint)}{{L^2(\tint+\delta,\tint+T;V)}}^2\\
        &\hspace{10mm}\stackrel{\text{\eqref{e8}}}{\leq} C_1 \left(   \norm{\bar y(\tint+\delta ;\tint, \yint)}{H}^2+\int^{\tint+T}_{\tint+\delta}\bar \ell(t;\tint, \yint)\ \dt \right)\\
        &\hspace{10mm}\stackrel{\text{\eqref{e8a}}}{\leq}  C_1  \left( C_2(\delta)\int_{\tint}^{\tint+\delta}\bar \ell(t;\tint, \yint)\ \dt +\int^{\tint+T}_{\tint+\delta}\bar\ell(t;\tint, \yint)\ \dt    \right) \\
        &\hspace{10mm}\stackrel{\text{\eqref{e23}}}{\leq}  C_1\big(C_2(\delta)+\theta_1\big)\int_{\tint}^{\tint+\delta}\bar \ell(t;\tint, \yint)\ \dt,
    \end{align*}
    we can conclude  \eqref{e24}.
\end{proof}
\begin{proposition}\label{pro1}
    Suppose that \cref{ass:pde,ass:running_cost,ass:1} hold and let $\delta>0$ be given. Then there exist  $\bar T>\delta$ and $\alpha \in (0,1)$ such that for every $T\geq \bar T$, the following inequalities hold for all $(\tint, \yint) \in  \R_{\geq 0} \times H$
    \begin{align}
    \label{e20s}
    V_T(\tint+\delta ,\bar y(\tint+\delta;\tint, \yint)) &\leq V_T(\tint, \yint)-\alpha \int_{\tint}^{\tint+\delta} \bar\ell(t;\tint, \yint)\ \dt, \\\label{e20e}
    V_T(\tint+\delta ,\bar y(\tint+\delta;\tint, \yint)) &\leq e^{-\zeta \delta} V_T(\tint, \yint),
    \end{align}
    where $\zeta>0$ depends only on $(\theta_1,\theta_2,\alpha)$. 
\end{proposition}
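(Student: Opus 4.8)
The plan is to combine the two one-step estimates \eqref{e23}--\eqref{e24} of \cref{lem3} with the elementary observation that, along the optimal trajectory, the finite-horizon value splits additively. Since $V_T(\tint,\yint)=\int_{\tint}^{\tint+T}\bar\ell(t;\tint,\yint)\,\dt$ by optimality, cutting the integral at $\tint+\delta$ gives
\begin{equation}\label{eq:split}
V_T(\tint,\yint)=\int_{\tint}^{\tint+\delta}\bar\ell(t;\tint,\yint)\,\dt+\int_{\tint+\delta}^{\tint+T}\bar\ell(t;\tint,\yint)\,\dt .
\end{equation}
This identity is what lets me trade the tail integral $\int_{\tint+\delta}^{\tint+T}\bar\ell$ appearing on the right-hand side of \eqref{e24} for $V_T(\tint,\yint)$.

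For \eqref{e20s}, I substitute \eqref{eq:split} into \eqref{e24}: replacing $\int_{\tint+\delta}^{\tint+T}\bar\ell$ by $V_T(\tint,\yint)-\int_{\tint}^{\tint+\delta}\bar\ell$ yields
\[
V_T(\tint+\delta,\bar y(\tint+\delta;\tint,\yint))\le V_T(\tint,\yint)-(1-\theta_2)\int_{\tint}^{\tint+\delta}\bar\ell(t;\tint,\yint)\,\dt .
\]
Because $\bar\ell\ge 0$ by \cref{ass:running_cost}, it only remains to secure $1-\theta_2\ge\alpha$ for a fixed $\alpha\in(0,1)$. This is the crux of the argument, and it rests on the claim that $\theta_2=\theta_2(T,\delta)\to0$ as $T\to\infty$: by \cref{ass:1} the function $\gamma$ is bounded, so $\theta_1=\gamma(T-\delta)C_2(\delta)$ stays bounded, the numerator $\gamma(T)C_1(C_2(\delta)+\theta_1)$ of $\theta_2$ stays bounded, while the denominator $T-\delta$ diverges. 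Fixing any $\alpha\in(0,1)$ (for instance $\alpha=\tfrac12$) and choosing $\bar T>\delta$ so large that $\theta_2(T,\delta)\le 1-\alpha$ for all $T\ge\bar T$ then delivers \eqref{e20s}; note that all constants involved are independent of $(\tint,\yint)$, so the same $\alpha$ and $\bar T$ work uniformly.

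For \eqref{e20e}, I convert the additive decrease into a geometric one by bounding $V_T(\tint,\yint)$ from above in terms of the first-interval cost. Using \eqref{e23} in \eqref{eq:split},
\[
V_T(\tint,\yint)\le(1+\theta_1)\int_{\tint}^{\tint+\delta}\bar\ell(t;\tint,\yint)\,\dt,
\qquad\text{i.e.}\qquad
\int_{\tint}^{\tint+\delta}\bar\ell(t;\tint,\yint)\,\dt\ge\frac{1}{1+\theta_1}\,V_T(\tint,\yint).
\]
Inserting this lower bound into \eqref{e20s} gives $V_T(\tint+\delta,\bar y(\tint+\delta;\tint,\yint))\le\bigl(1-\tfrac{\alpha}{1+\theta_1}\bigr)V_T(\tint,\yint)$. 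Since $0<\tfrac{\alpha}{1+\theta_1}<1$, the contraction factor lies strictly in $(0,1)$, and setting $\zeta\coloneqq-\tfrac1\delta\ln\bigl(1-\tfrac{\alpha}{1+\theta_1}\bigr)>0$ — which depends only on $\theta_1$ and $\alpha$, and through the admissible range of $\alpha$ also on $\theta_2$ — yields \eqref{e20e}.

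The only genuinely nontrivial ingredient is the asymptotic vanishing of $\theta_2$, which supplies both the threshold $\bar T$ and the room to fix $\alpha$; everything else is algebraic manipulation of \eqref{e23}--\eqref{e24} together with the splitting \eqref{eq:split} and the nonnegativity of $\bar\ell$. The point I would be most careful to verify is that $\alpha$, $\bar T$, and $\zeta$ can be selected independently of $(\tint,\yint)$, which is immediate here since $C_1$, $C_2(\delta)$, $\theta_1$, $\theta_2$, and the bound on $\gamma$ are all independent of the initial pair.
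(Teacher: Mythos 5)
Your proof is correct. For \eqref{e20s} it is the paper's argument verbatim: substitute the splitting $V_T(\tint,\yint)=\int_{\tint}^{\tint+\delta}\bar\ell\,\dt+\int_{\tint+\delta}^{\tint+T}\bar\ell\,\dt$ (valid by optimality of $(\bar\bu,\bar y)$) into \eqref{e24}, then use the boundedness of $\gamma$ from \cref{ass:1} to conclude $\theta_2(T,\delta)\to 0$ (equivalently $1-\theta_2\to 1$) as $T\to\infty$, which fixes $\alpha\in(0,1)$ and $\bar T$ uniformly in $(\tint,\yint)$.

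For \eqref{e20e} you take a genuinely different, though closely related, algebraic route. The paper combines \eqref{e23} and \eqref{e24} into \eqref{e44}, $V_T(\tint+\delta,\bar y(\tint+\delta;\tint,\yint))\le(\theta_1+\theta_2)\int_{\tint}^{\tint+\delta}\bar\ell\,\dt$, i.e.\ it bounds the first-interval cost from below by the value at the \emph{next} sampling instant, and then rearranges the resulting implicit inequality to obtain the contraction factor $\eta=(1+\nicefrac{\alpha}{\theta_1+\theta_2})^{-1}$. You instead bound the first-interval cost from below by the value at the \emph{current} instant, $\int_{\tint}^{\tint+\delta}\bar\ell\,\dt\ge\tfrac{1}{1+\theta_1}V_T(\tint,\yint)$, using only \eqref{e23} and the splitting; this plugs directly into \eqref{e20s} and yields the factor $1-\tfrac{\alpha}{1+\theta_1}$ with no implicit inequality to solve. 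Both factors lie in $(0,1)$, and both satisfy the proposition since only the existence of some $\zeta>0$ depending on $(\theta_1,\theta_2,\alpha)$ is claimed. The trade-off is quantitative: since $\alpha\le 1-\theta_2$ by construction, one checks (monotonicity of $x\mapsto x/(x+\alpha)$) that $\eta=\tfrac{\theta_1+\theta_2}{\theta_1+\theta_2+\alpha}\le\tfrac{1+\theta_1-\alpha}{1+\theta_1}=1-\tfrac{\alpha}{1+\theta_1}$, so the paper's constant is marginally sharper (a larger decay rate $\zeta$), while your derivation is slightly more direct. Your closing remarks on uniformity in $(\tint,\yint)$ and on the dependence of $\zeta$ are also accurate.
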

\begin{proof}
    From the definition of $V_T(\tint, \yint)$ and \eqref{e24}, we obtain
    \begin{align*}
        V_T(\tint+\delta ,\bar y(\tint+\delta;\tint, \yint)) - V_T(\tint, \yint)
        \le(\theta_2-1)\int^{\tint+\delta}_{\tint}\bar \ell(t;\tint,  \yint)\ \dt,
    \end{align*}
    where $\theta_2=\theta_2(T,\delta)$ is defined in \cref{lem3}. Due to the boundedness of $\gamma$ in \cref{ass:1}, we have for a fixed $\delta>0$
    \begin{equation}
    \label{e3}
     \alpha(T,\delta) := 1-\theta_2(T,\delta) \to 1 \text{ as } T\to \infty,
    \end{equation}
    and there exist $\bar T>\delta$ and $\alpha(\bar T,\delta) \in (0,1)$ such that $1-\theta_2(T,\delta)\geq \alpha(\bar T,\delta)$ for all
    $T\geq \bar T$. This implies \eqref{e20s}.
    Now, we turn to the verification of \eqref{e20e}. Using \eqref{e23} and \eqref{e24} we have
    \begin{equation}
    \label{e44}
    \begin{split}
    V_T( \tint+\delta,\bar y( \tint+\delta; \tint, \yint)) 
                                        \leq (\theta_1+\theta_2) \int^{\tint+\delta}_{\tint} \bar \ell(t;\tint,\yint)\ \dt.
    \end{split}
    \end{equation}
    Together with  \eqref{e20s}, we obtain 
    \begin{equation*}
    \begin{split}
    V_T(\tint+\delta ,\bar y( \tint+\delta; \tint, \yint)) - V_T( \tint,  \yint)  \leq  \tfrac{-\alpha}{\theta_1+\theta_2}V_T(\tint+\delta , \bar y( \tint+\delta; \tint,  \yint)). 
    \end{split}
    \end{equation*}
    Thus, by defining $\eta\coloneqq(1+\nicefrac{\alpha}{\theta_1+\theta_2})^{-1}\in(0,1)$, we can write
    \begin{equation*}
    V_T( \tint+\delta ,\bar y( \tint+\delta; \tint, \yint)) \leq  \eta V_T(\tint,\yint)
    \end{equation*}
    and,  as a consequence,   \eqref{e20e}  follows by setting  $\zeta\coloneqq\nicefrac{\lvert  \ln\eta \lvert }{\delta}$.
\end{proof}
In the next theorem, we present the main result of this section, namely the exponential stability and suboptimality of Algorithm \ref{algo:FOMRHC}. This result relies on property \eqref{e7} from \cref{ass:1}, conditions \eqref{e8} and \eqref{e8a} from \cref{ass:2}, as well as the well-posedness of \eqref{eqn:fin_hor_ocp} for every pair $(\tint,\yint)\in \R_{\ge 0}\times H$.

\begin{theorem}[Suboptimality and exponential stability]\label{theorem:stabilityRHC}
Let \cref{ass:pde,ass:running_cost,ass:1} hold. For a sampling time $\delta>0$, there exist $\bar T > \delta$ and $\alpha \in (0,1)$ such that, for every fixed prediction horizon $T \geq \bar T$, the RHC $\bu_{rh}$ obtained from \cref{algo:FOMRHC} is suboptimal and exponentially stabilizing for any $y_0 \in H$. That is,
\begin{align}
\label{ed27}
    V_{\infty}(y_0) \leq J_{\infty}(\bu_{rh};0,y_0)\leq \tfrac{1}{\alpha}V_{T}(0,y_0) \leq \tfrac{1}{\alpha} V_{\infty}(y_0),\\
    \label{ed28}
    \norm{y_{rh}(t)}{H}^2 \leq  C_{rh}e^{-\zeta t} \norm{y_0}{H}^2 \quad  \text{ for }  t\geq 0,
\end{align}
where the positive numbers $\zeta$ and $C_{rh}$ depend on $(\alpha,\delta,T)$, but are independent of $y_0$.
\end{theorem}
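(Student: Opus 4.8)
The plan is to combine the contraction estimate \eqref{e20e} from \cref{pro1}, which yields exponential decay of the finite-horizon value function along the RHC trajectory, with the per-step cost bound \eqref{e20s} to telescope the running costs and obtain the suboptimality chain \eqref{ed27}. First I would observe that the sampling points $t_k=k\delta$ generated by \cref{algo:FOMRHC} produce a sequence of initial values $y_{rh}(t_k)$, and that by the optimality principle the RHC cost on each subinterval $[t_k,t_{k+1})$ equals $\int_{t_k}^{t_{k+1}}\bar\ell(t;t_k,y_{rh}(t_k))\,\dt$. Summing \eqref{e20s} over $k=0,1,2,\dots$ telescopes the value-function terms: since $V_T(t_{k+1},y_{rh}(t_{k+1}))\le V_T(t_k,y_{rh}(t_k))-\alpha\int_{t_k}^{t_{k+1}}\bar\ell\,\dt$ and $V_T\ge 0$, I get $\alpha\sum_k\int_{t_k}^{t_{k+1}}\bar\ell\,\dt\le V_T(0,y_0)$, i.e. $J_\infty(\bu_{rh};0,y_0)\le\tfrac{1}{\alpha}V_T(0,y_0)$. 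The outer inequalities $V_\infty(y_0)\le J_\infty(\bu_{rh};0,y_0)$ and $V_T(0,y_0)\le V_\infty(y_0)$ are immediate: the former because $\bu_{rh}$ is a feasible infinite-horizon control and $V_\infty$ is the infimum, the latter because truncating any infinite-horizon control to $[0,T]$ and using $\ell\ge 0$ shows the finite-horizon minimum cannot exceed the infinite-horizon one.

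For the exponential stability estimate \eqref{ed28}, I would iterate \eqref{e20e} to get $V_T(t_k,y_{rh}(t_k))\le e^{-\zeta k\delta}V_T(0,y_0)=e^{-\zeta t_k}V_T(0,y_0)$ at the grid points. To upgrade this to a bound on $\norm{y_{rh}(t)}{H}^2$ for all $t\ge 0$ (not just at sampling times) I would use \cref{ass:2}: the lower bound $\tfrac12\norm{\varphi}{H}^2\le\ell(\varphi,\bv)$ built into \eqref{eq:ell_property} together with \eqref{e8} controls the $C([t_k,t_{k+1}];H)$-norm of the trajectory on each subinterval by the initial $H$-norm plus the local cost, and the local cost is in turn dominated by $V_T(t_k,y_{rh}(t_k))$. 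Concretely, on $[t_k,t_{k+1})$ the state $y_{rh}$ coincides with the optimal $\bar y(\cdot;t_k,y_{rh}(t_k))$, so \eqref{e8} gives $\norm{y_{rh}(\tau)}{H}^2\le C_1(\norm{y_{rh}(t_k)}{H}^2+V_T(t_k,y_{rh}(t_k)))$, and $\norm{y_{rh}(t_k)}{H}^2\le 2V_T(t_k,y_{rh}(t_k))$ from the running-cost lower bound and optimality. Chaining these with the decay $V_T(t_k,y_{rh}(t_k))\le e^{-\zeta t_k}V_T(0,y_0)\le\gamma(T)e^{-\zeta t_k}\norm{y_0}{H}^2$ (using \eqref{e7}) yields \eqref{ed28} with an explicit constant $C_{rh}$ absorbing $C_1$, $\gamma(T)$, and the factor $e^{\zeta\delta}$ needed to pass from the decay at $t_k$ to the supremum over $[t_k,t_{k+1})$.

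The main obstacle I anticipate is the last upgrade, namely bounding the \emph{continuous-time} trajectory uniformly on each sampling interval rather than only at the discrete grid points. The decay \eqref{e20e} is inherently a statement about $V_T$ at the sampling instants, so the continuity estimate \eqref{e8} of \cref{ass:2} must be invoked carefully to relate $\norm{y_{rh}(\tau)}{H}$ for $\tau\in[t_k,t_{k+1})$ back to $V_T(t_k,\cdot)$; I must make sure the constant $C_1$ is uniform in $k$ (which it is, since it depends only on the problem data, not on $(\tint,\yint)$) and that the extra factor from $e^{-\zeta t_k}\le e^{\zeta\delta}e^{-\zeta\tau}$ is correctly folded into $C_{rh}$. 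A secondary subtlety is ensuring the bound $\norm{y_{rh}(t_k)}{H}^2\le 2V_T(t_k,y_{rh}(t_k))$ via the $\tfrac12\norm{\cdot}{H}^2$ term in $\ell$ together with the initial condition $y(\tint)=\yint$, which pins the value of the trajectory at $\tau=t_k$ to $y_{rh}(t_k)$ and hence controls its $H$-norm by the first sliver of running cost. The abstract Gelfand-triple setting requires only the embedding $\mY_T(\tint)\hookrightarrow C([\tint,\tint+T];H)$ and \cref{ass:2}, both already available, so no new regularity argument should be needed beyond bookkeeping of constants.
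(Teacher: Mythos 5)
Your suboptimality argument (telescoping \eqref{e20s}, using $V_T\geq 0$, and the two outer inequalities) is exactly the paper's proof of \eqref{ed27}, and the grid-point decay $V_T(t_k,y_{rh}(t_k))\leq e^{-\zeta t_k}V_T(0,y_0)$ obtained by iterating \eqref{e20e} is also correct. The gap is in your stability upgrade: the inequality $\norm{y_{rh}(t_k)}{H}^2\leq 2V_T(t_k,y_{rh}(t_k))$, which you justify by ``the running-cost lower bound and optimality,'' is false in general. The value function only integrates $\tfrac12\norm{\bar y(t)}{H}^2$ over the horizon, and the value of a trajectory at its initial time is not controlled from above by such an integral, because the trajectory may decay arbitrarily fast immediately after $t_k$. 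Concretely, for the heat equation (covered by the paper's assumptions, cf.~\cref{sec:example}), take $\yint$ equal to the $n$-th Dirichlet eigenfunction: with zero control (admissible, and $g(0)=0$ by \cref{ass:running_cost}) one gets $V_T(\tint,\yint)\leq J_T(0;\tint,\yint)=\tfrac{1-e^{-2\lambda_n T}}{4\lambda_n}\norm{\yint}{H}^2$, so $V_T(\tint,\yint)/\norm{\yint}{H}^2\to 0$ as $n\to\infty$ and no constant can close your step. A bound of the form $\norm{\yint}{H}^2\leq C\,V_T(\tint,\yint)$ is a cost-observability (lower decrescence) property that the paper never assumes; \cref{ass:1} provides only the reverse inequality $V_T\leq\gamma(T)\norm{\cdot}{H}^2$.

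The paper closes this step in the opposite direction. The grid-point decay of the state norm comes from the smoothing estimate \eqref{e8a} of \cref{ass:2}, which bounds the state at the \emph{end} of a sampling interval by the cost over that interval: $\norm{y_{rh}(t_{k+1})}{H}^2\leq C_2(\delta)\int_{t_k}^{t_{k+1}}\bar\ell(t;t_k,y_{rh}(t_k))\,\dt\leq C_2(\delta)V_T(t_k,y_{rh}(t_k))\leq C_2(\delta)\gamma(T)e^{-\zeta t_k}\norm{y_0}{H}^2$, using \eqref{e20e} and then \eqref{e7}. Note that $C_2(\delta)\sim\delta^{-1}$ blows up as $\delta\to 0$, which reflects precisely why the state at the start of an interval cannot be bounded this way while later values can. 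For intermediate times $t\in[t_k,t_{k+1}]$ the paper then invokes \eqref{e8} as you do, but it eliminates the $V_T$ term through \eqref{e7}, i.e.\ $V_T(t_k,y_{rh}(t_k))\leq\gamma(T)\norm{y_{rh}(t_k)}{H}^2$ --- the legitimate direction --- so that everything reduces to $\norm{y_{rh}(t_k)}{H}^2$, whose decay is already established. Replacing your false inequality by this use of \eqref{e8a} and \eqref{e7} repairs the argument; the rest of your bookkeeping (the factor $e^{\zeta\delta}$ when passing from $t_k$ to arbitrary $t$, and the uniformity of $C_1$ in $k$) is fine and matches the paper.
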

\begin{proof}
    Consider the sampling instances $t_k=k\delta$ for $k\in \N_0$ from \cref{algo:FOMRHC}. The first inequality in \eqref{ed27} is trivial. For the second, we sum up \eqref{e20s} from \cref{pro1} with $\tint = t_k$, $\tint+\delta = t_{k+1}$ for $k=0,\ldots,k'$ to obtain
    \begin{equation}
    \nonumber
        \begin{aligned}
            \alpha J_{t_k'}(\bu_{rh};0,y_0)\leq  \sum\limits_{k=0}^{k'}V_T(t_k,y_{rh}(t_k))-V_T(t_{k+1},y_{rh}(t_{k+1}))\leq V_T(0,y_0)\leq V_\infty(y_0),
        \end{aligned}
    \end{equation}
    since $V_T\geq 0$ due to \eqref{eq:ell_property}. Letting $k'\to\infty$, we obtain \eqref{ed27}.
    We also have 
    \begin{equation}
        \nonumber 
        \begin{aligned}
            \norm{y_{rh}(t_{k+1})}{H}^2 &\stackrel{\text{\eqref{e8a}}}{\leq} C_2(\delta) V_T( t_{k} , y_{rh}(t_{k}))\stackrel{\text{\eqref{e20e}}}{\leq} C_2(\delta) e^{-\zeta t_{k}}  V_T(0, y_0)\\
            &\stackrel{\text{\eqref{e7}}}{\leq} C_2(\delta) \gamma(T) e^{-\zeta t_{k}} \norm{y_0}{H}^2.
        \end{aligned}
    \end{equation}
    Furthermore, setting $C_H =\nicefrac{ C_2(\delta) \gamma(T)}{\eta}$ with $\eta = e^{-\delta \zeta}$ we have 
    \begin{equation}
        \label{e93}
        \begin{split}
        \norm{y_{rh}(t_{k+1})}{H}^2 &\leq C_2(\delta) \gamma(T) e^{-\zeta t_{k}}  \norm{y_0}{H}^2 =  C_He^{-\zeta t_{k+1}} \norm{y_0}{H}^2 \quad \text{ for }  k \in \N_0. 
        \end{split}
    \end{equation}
    Moreover, for every $t >0$ there exists a $k \in \mathbb{N}$ such that $t \in [t_k, t_{k+1}]$. For $t \in [t_k, t_{k+1}]$,
    \begin{equation}
        \label{eq:exp_cont_in_t}
        \begin{split}
            \norm{y_{rh}(t)}{H}^2 &\stackrel{\text{\eqref{e8}}}{\leq} C_1\left(\norm{y_{rh}(t_k)}{H}^2+V_T(t_{k},y_{rh}(t_{k}))\right)\stackrel{\text{\eqref{e7}}}{\leq}C_1(1+\gamma(T))\norm{y_{rh}(t_k)}{H}^2\\
            &\hspace{-0.5mm} \stackrel{\text{\eqref{e93}}}{\leq} C_1C_H(1+\gamma(T)) e^{-\zeta t_k} \norm{y_0}{H}^2\leq  C_1C_H(1+\gamma(T))\eta^{-1}e^{-\zeta t_{k+1}}\norm{y_0}{H}^2\\
            &\hspace{2mm}\leq  C_1C_H(1+\gamma(T))\eta^{-1}e^{-\zeta t}\norm{y_0}{H}^2,
        \end{split}
    \end{equation}
    and therefore by setting $C_{rh} \coloneqq C_1C_H(1+\gamma(T))\eta^{-1}$
    we directly infer \eqref{ed28}.
\end{proof}
\begin{remark}
    For a fixed $\delta > 0$ we infer from \eqref{e3} that $\lim_{T \to \infty} \alpha(T) = 1$ and $\lim_{T \to \delta} \alpha(T) = -\infty$. That is, RHC is \emph{asymptotically optimal}. Moreover for fixed $\delta>0$, the constants $(C_{rh},\zeta)$ can be bounded independently of $T \geq \bar T$. Further, for a fixed $T \geq \bar T$, we can also see in the proof of \cref{ass:2} that $C_2(\delta)\to \infty$ as $\delta \to 0$, which implies $\theta_2(T,\delta)\to \infty$ and $\alpha = 1-\theta_2(T,\delta)\to -\infty$ as $\delta \to 0$.
\end{remark}
The goal of the paper is now to investigate conditions under which a similar result holds for the reduced counterpart of \cref{algo:FOMRHC}. 
For later use, we state the optimality condition of \eqref{eqn:fin_hor_ocp}.
\begin{remark}[Finite-horizon optimality condition]
    \label{rem:optcond}
    Let $(T,\tint,\yint)\in{\R_{>0}\times\R_{\ge0}} \times H$. Given $y\in L^2(\tint,\tint+T;H)$, consider the \emph{adjoint equation}
    \begin{align}
        \label{eq:FOM_opsys}
            -\ddt p(t)+A'(t)p(t)= y(t), \quad t \in  (\tint,\tint+T),\quad 
            p(\tint+T)=0.
    \end{align}
    By \cref{ass:pde}, there exists a unique solution $p=p(y)\in \mY_T(\tint)$. Moreover, $\bar \bu = \bar\bu(\tint,\yint)$ is optimal for \eqref{eqn:fin_hor_ocp}, if and only if there exist $\bar y,\bar p\in\mY_T(\tint)$ with $\bar y=\bar y(\bar \bu,\tint,\yint)$ solving \eqref{eq:LTV} for $\bu=\bar \bu$, $\bar p=\bar p(\bar y)$ solving \eqref{eq:FOM_opsys} for $y =\bar y $, and
    \begin{align}
        \label{eq:FOM_opcond}
        \int_{\tint}^{\tint+T}{\langle B'(t)\bar p(t)+ \lambda \bar \bu(t), \bu(t)-\bar \bu(t)\rangle}_U\,\dt \geq g_T(\bar \bu)-g_T(\bu)\quad\text{for all }\bu\in\mU_T(\tint).
    \end{align}
    Here, we have set $g_T(\bu;\tint)\coloneqq \int_{\tint}^{\tint+T}g(\bu(t))\ \dt$ and in \eqref{eq:FOM_opsys}, \eqref{eq:FOM_opcond}, the operators $A'(t)\in\calL(V,V')$, $B'(t)\in\calL(V,U)$ denote the adjoint of $A(t)$ and $B(t)$, respectively. Defining $x=(y,\bu,p)\in\mX_T(\tint)\coloneqq \mY_T(\tint)\times\mU_T(\tint)\times\mY_T(\tint)$ and the Lagrangian of the cost function's smooth part as
    \begin{align*}
        L(x;\yint)&\coloneqq \int_{\tint}^{\tint+T}\left( \tfrac{1}{2}\norm{y(t)}{H}^2+\tfrac{\lambda}{2}\norm{\bu(t)}{U}^2+\langle B(t)\bu(t)-A(t)y(t)-\partial_t y(t), p(t) \rangle_{V',V}\,\right)\dt\\
        &\quad+ \langle \yint-y(\tint),p(\tint)\rangle_H,
    \end{align*}
    we can express the optimality condition compactly as
    \begin{subequations}\label{eq:FOML}
        \begin{align}
            L'_y(\bar x;\yint)(y) &= 0&&\hspace{-25mm}\text{for all }y\in\mY_T(\tint),
            \label{eq:FOML1}\\
            L'_p(\bar x;\yint)(p) &= 0&&\hspace{-25mm}\text{for all }p\in\mY_T(\tint),
            \label{eq:FOML2}\\
            L'_{\bu}(\bar x;\yint)(\bu-\bar \bu) &\geq g_T(\bar \bu)-g_T(\bu)&&\hspace{-25mm}\text{for all }\bu\in\mU_T(\tint).
            \label{eq:FOML3}\
        \end{align}
    \end{subequations}
\end{remark}
%
\section{Reduced-order modeling for the finite-horizon problem}\label{sec:ROM}
%
\cref{algo:FOMRHC} represents a multi-query scenario for the finite-horizon FOM open-loop problem \eqref{eqn:fin_hor_ocp}. In this section, we introduce a cheap-to-compute reduced version \eqref{eqn:reducedStabilizationProblem}, derive corresponding error estimates, and analyze the properties of the ROM in \cref{subsec:apost} and \cref{subsec:ROMproperties}, respectively.

Let $T>0$ be finite throughout this section, and let $V_r\subset V$ be a finite-dimensional linear (reduced-order) subspace. By Galerkin projection, the reduced-order solution $\reduce{y}(t)\in V_r$ satisfies for $(\tint,\tildeyint)\in\R_{\ge0}\times H$
\begin{equation}
    \label{eq:reducedLTV}
    \tag{$\text{ROM}^r_{T}(\tint, \tildeyint)$}
     \ddt {\reduce {y}}(t)+A(t){{\reduce {y}}}(t)= {B}(t)\bu(t)\ \text{in } V_r', \ t \in  ( \tint,\tint+T),\ 
           \reduce{y}(\tint)
           =\Pi^H_{V_r} \tildeyint,
\end{equation}
where $\Pi^H_{V_r}:H\to V_r$ is the $H$-orthogonal projection onto $V_r$, characterized as unique solution to $\langle\Pi^H_{V_r}\tildeyint,v\rangle_H=\langle \tildeyint ,v\rangle_H$ for all $v\in V_r$. Note that we allow for $\tildeyint \neq \yint$. 
We call \eqref{eq:reducedLTV} the \emph{reduced-order model} (ROM) and by \cref{ass:pde}, there exists a unique solution $\reduce{y}=y^r(\bu,\tint,\tildeyint)\in H^1(\tint,\tint+T;V_r)\hookrightarrow \mY_T^r(\tint)$ for all $(\bu,\tildeyint)\in \mU_T(\tint)\times H$.
In addition, we can introduce the reduced finite-horizon problem
\begin{equation}
    \label{eqn:reducedStabilizationProblem}
    \tag{$\text{OP}^r_{T}(\tint,\tildeyint)$}
    \min\limits_{\bu\in\mU_T(\tint)} J_T^r(\bu;\tint,\tildeyint)\coloneqq\int^{\tint+T}_{\tint} \ell(y^r(t;\bu,\tint,\tildeyint),\bu(t))\ \dt
\end{equation}
and the reduced finite-horizon value function $V^r_{T}: \mathbb{R}_{\geq 0} \times H \to \mathbb{R}_{\geq 0}$ 
\begin{equation*}
    V^r_T(\tint,\tildeyint)\coloneqq \inf\big\{J_T^r(\bu;\tint,\tildeyint)\,\big|\,\bu\in \mU_T(\tint)\big\}.
\end{equation*}
\begin{remark}[Reduced finite-horizon optimality condition]
    Given data $\tilde y\in L^2(\tint,\tint+T;H)$, we introduce the reduced adjoint system 
    \begin{align}
        \label{eq:ROM_opsys}
            -\ddt {\reduce {p}}(t)+A'(t){\reduce {p}}(t)= { \tilde{y}}(t)\ \text{in } V_r', \quad t \in  ( \tint,\tint+T),\quad 
            {\reduce {p}}(\tint+T)=0,
    \end{align}
    with the unique solution $p^r=p^r(\tilde y)\in H^1(\tint,\tint+T;V_r)$.
    Also, \eqref{eqn:reducedStabilizationProblem} is uniquely solvable and the unique solution is given by ${\reduce {\bar \bu}} = {\reduce {\bar \bu}}(\tint,\tildeyint)$, if and only if there exists ${\reduce {\bar y}},{\reduce {\bar p}}\in\mY^r_T(\tint)$ with ${\reduce {\bar y}}={\reduce {\bar y}}(\bar\bu^r,\tint,\tildeyint)$ solving \eqref{eq:reducedLTV} for $\bu={\reduce {\bar \bu}} $, ${\reduce {\bar p}}={\reduce {\bar p}}(\bar y^r)$ solving \eqref{eq:ROM_opsys} for $\tilde y={{\bar y^r}} $, and
    \begin{align}\label{eq:ROM_opcond}
        \int_{\tint}^{\tint+T}{\langle B'(t){\reduce {\bar p}}(t)+\lambda {\reduce {\bar\bu}}(t),\bu(t)-{\reduce{\bar \bu}(t)}\rangle}_U\,\dt\geq g_T(\reduce{\bar\bu})-g_T(\bu)\text{ for all }\bu\in\mU_T(\tint).
    \end{align}
    For $\reduce{\bar x}=(\reduce{\bar y},\reduce{\bar\bu},\reduce{\bar p})\in{\mX}^r_T(\tint)\coloneqq\mY^r_T(\tint)\times\mU_T(\tint)\times\mY^r_T(\tint)$ we get the optimality condition
    \begin{subequations}
        \label{eq:ROML}
        \begin{align}
            L'_y(\reduce{\bar x};\tildeyint)(y^r) &= 0&&\text{for all }y^r\in\mY^r_T(\tint),
            \label{eq:ROML1}\\
            L'_p(\bar x^r;\tildeyint)(p^r) &= 0&&\text{for all }p^r\in\mY^r_T(\tint),
            \label{eq:ROML2}\\
            L'_\bu(\bar x^r;\tildeyint)(\bu-\bar \bu^r) &\geq g_T(\bar \bu^r)-g_T(\bu)&&\text{for all }\bu\in\mU_T(\tint).
            \label{eq:ROML3}
        \end{align}
    \end{subequations}
\end{remark}
%
\subsection{A posteriori error estimation for the finite-horizon problem}\label{subsec:apost}
%
In this section, we present \emph{a posteriori} error estimates that serve to quantify the performance of the reduced RHC algorithm. In \cref{subsec:APOS_stat}, we derive estimators for the state, adjoint state, and optimal control, while in \cref{subsec:APOST_val}, we establish error estimates for the cost and value functions.
%
\subsubsection{State, adjoint state, and optimal control estimates}\label{subsec:APOS_stat}
%
Let the initial values of the \eqref{eq:LTV} and the \eqref{eq:reducedLTV} satisfy 
\begin{equation}\label{eq:perturb_init}
    \norm{ \yint-\tildeyint}{H} \leq \Delta_{\yint} \quad \text{for } \Delta_{\yint}\geq 0.
\end{equation}
In the following lemmas, we establish the corresponding error estimators.
\begin{lemma}[State a posteriori estimator]
    \label{Lemma:ResBasStateError}
    Let \cref{ass:pde} be valid and
    let $y=y(\bu,\tint,\yint)\in\mY_T(\tint)$ and $y^r=y^r(\bu^r ,\tint,\tildeyint)\in\mY^r_T(\tint)$ be the solution of \eqref{eq:LTV} and \eqref{eq:reducedLTV} for $\yint,\tildeyint \in H$ and $\bu,\bu^r\in\mU_T(\tint)$, respectively. Assume the data satisfies \eqref{eq:perturb_init} and ${\norm{\bu-\reduce\bu}{\mU_T(\tint)}}\leq\Delta_\bu$
    for $\Delta_\bu\geq 0$. Define the state error and residual as
    \begin{equation}\nonumber
        e_y\coloneqq y-\reduce y, \quad \Res_y(y^r,\bu^r)(t)\coloneqq B(t)\bu^r(t)-A(t)y^r(t)-\partial_t y^r(t) \in V' \text{ for }t\in(0,T).
    \end{equation}
    Then, we have the \emph{a posteriori} error bound for $t\in [\tint, \tint+T]$
    \begin{align}\label{eq:state_apost_CHL2V}
        &\norm{e_y(t)}{H}^2+\norm{e_y}{L^2(\tint,t;V)}^2\leq \Delta_y^2(t;\Delta_\bu, \Delta_{\yint}, y^r, \bu^r)
    \end{align}
    with
    \begin{align*}
        \Delta^2_{y}(t)&\coloneqq C_{1,y}(t)\Big(\norm{e^{-\eta_H (\cdot-\tint) }B}{L^\infty(\tint,t;\calL(U,V'))}^2{\Delta_\bu^2} + \norm{e^{-\eta_H (\cdot-\tint) }\Res_y(\reduce y, \reduce \bu)}{L^2(\tint,t;V')}^2\Big)\\
        &\hspace{1mm}\quad+ C_{2,y}(t){\big( \Delta_{\yint} + \norm{\tildeyint - \Pi^H_{V_r}\tildeyint }{H} \big)^2}
    \end{align*}
    for the constants $C_{1,y}(t)\coloneqq \nicefrac{2e^{2\eta_H(t-\tint)}}{\min\{1,\eta_V\}\eta_V}$, $C_{2,y}(t)\coloneqq \nicefrac{e^{2\eta_H(t-\tint)}}{\min\{1,\eta_V\}}$. If $t=\tint+T$, we simply write $\Delta_y^2(\Delta_\bu, \Delta_{\yint}, y^r, \bu^r)$ instead of $\Delta_y^2(\tint+T;\Delta_\bu, \Delta_{\yint}, y^r, \bu^r)$.
\end{lemma}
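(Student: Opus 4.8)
The plan is to derive a linear evolution equation for the error $e_y=y-\reduce{y}$ and then run a \emph{weighted} energy estimate. First I would rewrite the reduced equation in residual form: by the definition of $\Res_y(\reduce{y},\reduce{\bu})$ we have $\partial_t\reduce{y}+A\reduce{y}=B\reduce{\bu}-\Res_y(\reduce{y},\reduce{\bu})$ in $V'$, which is meaningful because $A\in L^\infty(\calL(V,V'))$, $B\in L^\infty(\calL(U,V'))$ and $\reduce{y}\in\mY^r_T(\tint)\hookrightarrow\mY_T(\tint)$ together give $\Res_y(\reduce{y},\reduce{\bu})\in L^2(\tint,\tint+T;V')$. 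Subtracting this from \eqref{eq:LTV} (driven by the control $\bu$) yields the error equation
\[
\partial_t e_y(t)+A(t)e_y(t)=B(t)\big(\bu(t)-\reduce{\bu}(t)\big)+\Res_y(\reduce{y},\reduce{\bu})(t)\quad\text{in }V',
\]
with initial datum $e_y(\tint)=\yint-\Pi^H_{V_r}\tildeyint$, obtained by comparing the two initial conditions.

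Next I would test with $e_y(t)\in V$, using $\langle\partial_t e_y,e_y\rangle_{V',V}=\tfrac12\tfrac{d}{dt}\norm{e_y}{H}^2$ and bounding $\langle A e_y,e_y\rangle_{V',V}\geq\eta_V\norm{e_y}{V}^2-\eta_H\norm{e_y}{H}^2$ via \eqref{eq:weak_coercivity}. On the right-hand side I would estimate each duality pairing by $\norm{\cdot}{V'}\norm{e_y}{V}$ and apply Young's inequality with a parameter tuned so that exactly half of the $\eta_V\norm{e_y}{V}^2$ coercivity term is absorbed; the two forcing terms then appear weighted by $1/\eta_V$. After multiplying by $2$ this leaves $\tfrac{d}{dt}\norm{e_y}{H}^2+\eta_V\norm{e_y}{V}^2-2\eta_H\norm{e_y}{H}^2\leq\tfrac{2}{\eta_V}\big(\norm{B(\bu-\reduce{\bu})}{V'}^2+\norm{\Res_y}{V'}^2\big)$.

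The crux is the indefinite-sign term $-2\eta_H\norm{e_y}{H}^2$, which blocks a naive integration and is precisely what forces the exponential weights in the estimator. I would neutralize it with the integrating factor $e^{-2\eta_H(t-\tint)}$: multiplying through and integrating over $(\tint,t)$ makes the $\norm{e_y}{H}^2$-coefficient an exact derivative, giving
\[
e^{-2\eta_H(t-\tint)}\norm{e_y(t)}{H}^2+\eta_V\!\int_{\tint}^{t}\! e^{-2\eta_H(s-\tint)}\norm{e_y(s)}{V}^2\ds\leq\norm{e_y(\tint)}{H}^2+\tfrac{2}{\eta_V}\!\int_{\tint}^{t}\! e^{-2\eta_H(s-\tint)}\big(\norm{B(\bu-\reduce{\bu})}{V'}^2+\norm{\Res_y}{V'}^2\big)\ds.
\]
Since $e^{-2\eta_H(s-\tint)}\geq e^{-2\eta_H(t-\tint)}$ on $[\tint,t]$, I can pull the weight out of the left-hand $V$-integral, and dividing by $\min\{1,\eta_V\}$ produces $\norm{e_y(t)}{H}^2+\norm{e_y}{L^2(\tint,t;V)}^2$ on the left carrying the prefactor $e^{2\eta_H(t-\tint)}/\min\{1,\eta_V\}$; this is exactly $C_{2,y}(t)$, and combined with the extra $2/\eta_V$ it yields $C_{1,y}(t)$.

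Finally I would dispatch the three right-hand contributions. The forcing integrals are recognized as the weighted $L^2(V')$-norms of the statement, using $\norm{e^{-\eta_H(\cdot-\tint)}B(\bu-\reduce{\bu})}{L^2(\tint,t;V')}^2\leq\norm{e^{-\eta_H(\cdot-\tint)}B}{L^\infty(\tint,t;\calL(U,V'))}^2\Delta_\bu^2$ for the control term; and the initial term is split by the triangle inequality, $\norm{\yint-\Pi^H_{V_r}\tildeyint}{H}\leq\norm{\yint-\tildeyint}{H}+\norm{\tildeyint-\Pi^H_{V_r}\tildeyint}{H}\leq\Delta_{\yint}+\norm{\tildeyint-\Pi^H_{V_r}\tildeyint}{H}$ by \eqref{eq:perturb_init}. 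Collecting these reproduces $\Delta_y^2(t)$ verbatim. The only genuine obstacle is the bookkeeping around the indefinite $\eta_H$-term: one must be disciplined about the Young parameter and about the direction of the weight inequality in order to land on the precise constants $C_{1,y},C_{2,y}$ rather than on a merely equivalent bound.
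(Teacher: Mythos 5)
Your proof is correct and follows essentially the same route as the paper: derive the error equation, test with $e_y$, apply Young's inequality to absorb half the coercivity, and handle the Gårding term $-\eta_H\norm{e_y}{H}^2$ by exponential weighting, landing on the identical constants $C_{1,y}(t),C_{2,y}(t)$. The only (cosmetic) difference is that the paper performs the substitution $e_v(t)=e^{-\eta_H(t-\tint)}e_y(t)$ to reduce to the coercive case, whereas you apply the equivalent integrating factor $e^{-2\eta_H(t-\tint)}$ directly to the differential inequality.
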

\begin{proof}
    Defining $e_u= \bu-\reduce\bu$, we obtain the following error equation
    \begin{equation}
    \label{eq:error_eq}
    \begin{cases}
    \ddt e_y(t)=-{A}(t)e_y(t)+{B}(t)e_{\bu}(t)+ \Res_y(\reduce y, \reduce \bu)(t) &\text{in } V'\text{ for } t \in ( \tint,\tint+T),\\
    e_y(\tint)= \yint- \Pi^H_{V_r}\tildeyint. &
    \end{cases}
    \end{equation}
    Let $t\in (\tint,\tint+T)$. Testing with $e_y(s)\in V$ f.a.a. $s\in (\tint,t$) and integrating over $(\tint,t)$ leads to
    \begin{align*}
        &\tfrac{1}{2} \norm{e_y(t)}{H}^2 + \int_{\tint}^{t}{\langle A(s) e_y(s), e_y(s) \rangle}_{V',V} \ds=  \tfrac{1}{2} \norm{e_y(\tint)}{H}^2 +\int_{\tint}^{t} \langle B(s)e_u(s) + \Res_y(\reduce y, \reduce \bu)(s), e_y(s)\rangle_{V',V} \ds .
    \end{align*}
    First, we assume $A(s)$ to be coercive, that is, $\eta_H=0$ in \cref{ass:pde}. 
    This results in
    \begin{align*}
        &\tfrac{1}{2}\norm{e_y(t)}{H}^2 + \eta_V \norm{e_y}{L^2(\tint,t;V)}^2\leq   \tfrac{1}{2}\norm{e_y(\tint)}{H}^2 + \int_{\tint}^{t}\tfrac{1}{2\varepsilon} \norm{ B(s)e_u(s) + \Res_y(\reduce y, \reduce \bu)(s)}{V'}^2 +\tfrac{\varepsilon}{2}{e_y(s)}_V^2 \ds
    \end{align*}
    for $\varepsilon\in (0,2\eta_V]$ by Young's inequality. Therefore we have
    \begin{align*}
       &\tfrac{1}{2}\norm{e_y(t)}{H}^2 +\big(\eta_V-\nicefrac{\varepsilon}{2}\big) \norm{e_y}{L^2(\tint,t;V)}^2\le\tfrac{1}{2}\norm{e_y(\tint)}{H}^2 + \tfrac{1}{2\varepsilon} \norm{ Be_u +  \Res_y(\reduce y, \reduce \bu)}{L^2(\tint,t;V')}^2\\
        &\qquad\leq  \tfrac{1}{2}\norm{e_y(\tint)}{H}^2 + \tfrac{1}{\varepsilon} \big(\norm{B}{L^\infty(\tint,t;\calL(U,V'))}^2\Delta_u^2 +\norm{\Res_y(\reduce y, \reduce \bu)}{L^2(\tint,t;V')}^2\big).
    \end{align*}
    The case $\eta_H>0$ follows by using the standard transformation $e_v(t)=e^{-\eta_H (t-\tint)}e_y(t)$. The transformed error equation for $e_v$ is given by the coercive differential operator $\tilde A(t)\coloneqq A(t)+\eta_H$, which, as before, leads to the scaled estimate
    \begin{align}\nonumber
        &\tfrac{1}{2}{\big|e^{-\eta_H (t-\tint)}e_y(t)\big|}_{H}^2+\big(\eta_V-\nicefrac{\varepsilon}{2}\big){\big|e^{-\eta_H (\cdot-\tint) }e_y\big|}_{L^2(\tint,t;V)}^2\\
        &\leq \tfrac{1}{2}\norm{e_y(\tint)}{H}^2 + \tfrac{1}{\varepsilon}\left({\big|e^{-\eta_H (\cdot-\tint) }B\big|}_{L^\infty(\tint,t;\calL(U,V'))}^2\Delta_u^2+ {\big|e^{-\eta_H (\cdot-\tint) }\Res_y(\reduce y, \reduce \bu)\big|}_{L^2(\tint,t;V')}^2\right). 
        \nonumber 
    \end{align}
    Hence, we also find
    \begin{align*}
        &\tfrac{1}{2}{\big|}e_y(t)\big|_H^2+\big(\eta_V-\nicefrac{\varepsilon}{2}\big)\norm{e_y}{L^2(\tint,t;V)}^2\leq \tfrac{e^{2\eta_H (t-\tint)}}{2}\norm{e_y(\tint)}{H}^2\\
        &+ \tfrac{e^{2\eta_H (t-\tint)}}{\varepsilon}\left( {\big|e^{-\eta_H (\cdot-\tint) }B\big|}_{L^\infty(\tint,t;\calL(U,V'))}^2\Delta_u^2 +{\big|e^{-\eta_H (\cdot-\tint) }\Res_y(\reduce y, \reduce \bu)\big|}_{L^2(\tint,t;V')}^2\right).
    \end{align*}
    For the error in the initial condition, we add and subtract $\tildeyint$, and use \eqref{eq:perturb_init}.
    Choosing $\varepsilon= \eta_V$ and $t=\tint+T$ leads to \eqref{eq:state_apost_CHL2V}. 
\end{proof}
\begin{lemma}[Adjoint state a posteriori estimator]
    \label{lem:ResAdjest}
    Suppose that \cref{ass:pde} holds. Let $p=p(y)\in\mY_T(\tint)$ and $p^r=p^r(\tilde y)\in\mY^r_T(\tint)$ be the solutions of \eqref{eq:FOM_opsys} and \eqref{eq:ROM_opsys} for $y,\tilde y\in L^2(\tint,\tint+T;H)$, respectively. Assume that $\norm{y-\tilde y}{L^2(\tint,\tint+T;H)} \leq \Delta_y $
    for $\Delta_y\geq 0$. Define the adjoint state error and residual as
    \begin{equation*}
        e_p\coloneqq p-p^r, \quad \Res_p(\tilde y, p^r)(t)\coloneqq\tilde y(t)-A'(t)p^r(t)+\dot p^r(t)\in V'.
    \end{equation*}
    Then, we have the \emph{a posteriori} error bound
    \begin{align}\label{eq:adjstate_apost_CHL2V}
        &\norm{e_p(\tint)}{H}^2+\norm{e_p}{L^2(\tint,\tint+T;V)}^2\leq \Delta_p^2(\Delta_y,\tilde y, p^r)
    \end{align}
    with
    \begin{align}\label{eq:adjstate_apost_CHL2V_struc}
        \Delta_{p}^2{(\Delta_y,\tilde y, p^r)}  \coloneqq & C_{p}\Big(\Delta_y^2 + \norm{e^{\eta_H (\cdot-\tint-T) }\Res_p(\tilde y, \reduce p)}{L^2(\tint,\tint+T;V')}^2\Big)
    \end{align}
    and $C_{p}\coloneqq\nicefrac{2e^{2\eta_H T}}{\min(1,\eta_V)\eta_V}$. Further, we have the improved estimate
    \begin{align}\label{eq:adjstate_apost_Ht}
        &\norm{e_p(\tint)}{H}^2\leq \tfrac{1}{2} \Delta_{p}^2 {(\Delta_y,\tilde y, p^r)}.
    \end{align}
\end{lemma}
\begin{proof}
    The claim follows by using similar arguments as in the proof of \cref{Lemma:ResBasStateError} and the transformation $e_v(t)=e^{\eta_H(t-\tint-T)}e_p(t)$.
\end{proof}
\begin{theorem}[Optimal control estimator]\label{theo:oc_est}
    Let \cref{ass:pde,ass:running_cost} be valid and let $\bar x=(\bar y, \bar \bu, \bar p)\in\mX_T(\tint),\bar x^r=(\bar y^r, \bar \bu^r, \bar p^r)\in\mX^r_T(\tint)$ be the solutions of the FOM \eqref{eq:FOML} and ROM \eqref{eq:ROML} optimality systems for $\yint,\tildeyint\in H$, respectively. Then, it holds
    \begin{align}\label{eq:opt_cont_est}
        \hspace{-1.95mm}\norm{\bar\bu-\bar\bu^r}{\mU_T(\tint)}^2\leq& \bar \Delta^2_{ \bu}(\bar x^r,\Delta_{\yint})\coloneq \tfrac{\norm{B}{L^\infty}^2}{\lambda^2} \Delta_p^2(0,\bar y^r,\bar p^r) + \tfrac{1}{\lambda}\Delta_y^2(0,0,\bar y^r,\bar \bu^r)+\tfrac{
        C_\bu}{\lambda}\Delta^2_{\yint},\\ 
        \label{eq:opt_out_est}
         \hspace{-1.95mm}\norm{\bar y-\bar y^r}{L^2(H)}^2\leq& \bar \Delta^2_{y,H}(\bar x^r,\Delta_{\yint})\coloneq \tfrac{\norm{B}{L^\infty}^2}{{2}\lambda}  \Delta_p^2(0,\bar y^r,\bar p^r) + 2\Delta_y^2(0,0,\bar y^r,\bar \bu^r)+2
        C_\bu\Delta^2_{\yint}
    \end{align}
    for $C_\bu = \nicefrac{e^{2\eta_HT}}{2\eta_V}$
    and $\Delta_{y}, \Delta_{p}$ as in \cref{Lemma:ResBasStateError,lem:ResAdjest}, respectively.
\end{theorem}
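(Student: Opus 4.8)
The plan is to compare the two optimality systems through their variational inequalities \eqref{eq:FOM_opcond} and \eqref{eq:ROM_opcond}, and to reduce everything to the computable residual-based estimators of \cref{Lemma:ResBasStateError,lem:ResAdjest}. First I would cross-test: insert the admissible $\bu=\bar\bu^r$ into the FOM condition \eqref{eq:FOM_opcond} and $\bu=\bar\bu$ into the ROM condition \eqref{eq:ROM_opcond}, and add the two inequalities. Since both feature the \emph{same} functional $g_T$, the non-smooth terms cancel exactly, and writing $e_\bu\coloneqq\bar\bu-\bar\bu^r$ one is left with the monotonicity-type bound
\begin{equation*}
\lambda\norm{e_\bu}{\mU_T(\tint)}^2\leq\int_{\tint}^{\tint+T}\scalprod{B'(t)(\bar p^r(t)-\bar p(t))}{e_\bu(t)}{U}\,\dt .
\end{equation*}
This isolates the control error but couples it to the non-computable adjoint error $\bar p-\bar p^r$.

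To make the right-hand side computable, I would introduce two auxiliary FOM trajectories driven by reduced data: the FOM state $\hat y\coloneqq y(\bar\bu^r,\tint,\tildeyint)$ and the FOM adjoint $\hat p\coloneqq p(\bar y^r)$. Splitting $\bar p^r-\bar p=(\bar p^r-\hat p)-(\bar p-\hat p)$ separates a \emph{pure model-reduction} part $\bar p^r-\hat p=p^r(\bar y^r)-p(\bar y^r)$, controlled by $\Delta_p(0,\bar y^r,\bar p^r)$ from \cref{lem:ResAdjest} (here the data coincide, so the perturbation argument is $0$), from a \emph{propagated} part $w\coloneqq\bar p-\hat p$, which solves the exact FOM adjoint equation \eqref{eq:FOM_opsys} with data $\bar y-\bar y^r=e_y$ and terminal value $w(\tint+T)=0$ (no reduced residual, since both $\bar p$ and $\hat p$ are exact adjoints). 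The crucial step is a duality/integration-by-parts argument: since $z\coloneqq\bar y-\hat y$ solves the exact FOM state equation with forcing $Be_\bu=\partial_t z+Az$ and initial value $\yint-\tildeyint$, pairing $w$ against this forcing, using the adjointness of $A$ and $A'$ and the boundary data $w(\tint+T)=0$, $z(\tint)=\yint-\tildeyint$, yields
\begin{equation*}
\int_{\tint}^{\tint+T}\scalprod{B'w}{e_\bu}{U}\,\dt=\int_{\tint}^{\tint+T}\scalprod{e_y}{z}{H}\,\dt-\scalprod{w(\tint)}{\yint-\tildeyint}{H}.
\end{equation*}

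Substituting $z=e_y-d$ with the model-reduction state error $d\coloneqq\hat y-\bar y^r$ produces the crucial good sign: the propagated term contributes $-\norm{e_y}{L^2(H)}^2$, which I would move to the left to obtain
\begin{equation*}
\lambda\norm{e_\bu}{\mU_T(\tint)}^2+\norm{e_y}{L^2(H)}^2\leq\int_{\tint}^{\tint+T}\scalprod{B'(\bar p^r-\hat p)}{e_\bu}{U}\,\dt+\int_{\tint}^{\tint+T}\scalprod{e_y}{d}{H}\,\dt+\scalprod{w(\tint)}{\yint-\tildeyint}{H}.
\end{equation*}
I would then bound the three terms by Cauchy--Schwarz: the first by $\norm{B}{L^\infty}\Delta_p(0,\bar y^r,\bar p^r)\,\norm{e_\bu}{\mU_T(\tint)}$; the second by $\Delta_y(0,0,\bar y^r,\bar\bu^r)\,\norm{e_y}{L^2(H)}$ using \cref{Lemma:ResBasStateError} with vanishing control and initial perturbation (the residual and projection contributions being exactly what controls $d$); and the third via the stability estimate $\norm{w(\tint)}{H}^2\leq C_\bu\norm{e_y}{L^2(H)}^2$, an energy bound for the FOM-to-FOM adjoint difference with the $e^{\eta_H(\cdot-\tint-T)}$-rescaling of \cref{lem:ResAdjest}, which is precisely where $C_\bu=e^{2\eta_H T}/(2\eta_V)$ (and the improved factor from \eqref{eq:adjstate_apost_Ht}) enters. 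Young's inequality with suitably chosen weights then absorbs $\norm{e_\bu}{\mU_T(\tint)}^2$ and $\norm{e_y}{L^2(H)}^2$ into the left-hand side and delivers \eqref{eq:opt_cont_est} and \eqref{eq:opt_out_est} with the stated constants.

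I expect the main obstacle to be the duality step together with the bookkeeping of signs: one must route the state and adjoint errors exclusively through the exact FOM pair $(z,w)$ so that no reduced residual contaminates the propagated part, and verify that the cross term $\int\scalprod{e_y}{z}{H}$ recombines into $+\norm{e_y}{L^2(H)}^2$ on the correct side. A secondary technical point is confirming that $\Delta_p(0,\bar y^r,\bar p^r)$ and $\Delta_y(0,0,\bar y^r,\bar\bu^r)$, evaluated with zero perturbation arguments, genuinely bound the pure reduction errors $\bar p^r-\hat p$ and $d$, and that the Gelfand-triple embeddings $\norm{\cdot}{V'}\leq\norm{\cdot}{H}\leq\norm{\cdot}{V}$ make the $L^2(H)$-norms fit under the $L^2(V)$-based estimators; the residual mismatch between the two target inequalities is then a matter of selecting the Young weights separately for \eqref{eq:opt_cont_est} and \eqref{eq:opt_out_est}.
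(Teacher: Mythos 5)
Your proposal is correct, and its endgame coincides with the paper's proof: both bound the pure model-reduction errors $p(\bar y^r)-\bar p^r$ and $y(\bar\bu^r,\tint,\tildeyint)-\bar y^r$ by $\Delta_p(0,\bar y^r,\bar p^r)$ and $\Delta_y(0,0,\bar y^r,\bar\bu^r)$ with vanishing perturbation arguments, and both finish with two separate Young-weight choices ($\varepsilon_1=\lambda,\varepsilon_2=1,\varepsilon_3=\nicefrac{1}{C_\bu}$ for \eqref{eq:opt_cont_est}; $\varepsilon_1=2\lambda,\varepsilon_2=\nicefrac{1}{2},\varepsilon_3=\nicefrac{1}{2C_\bu}$ for \eqref{eq:opt_out_est}). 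The difference is upstream: the paper never derives the master inequality
\begin{align*}
&\big(\lambda-\tfrac{\varepsilon_1}{2}\big)\norm{\bar\bu-\bar\bu^r}{\mU_T(\tint)}^2+\big(1-\tfrac{\varepsilon_2}{2}-\tfrac{\varepsilon_3C_\bu}{2}\big)\norm{\bar y-\bar y^r}{L^2(H)}^2\\
&\qquad\leq\tfrac{\norm{B}{L^\infty}^2}{2\varepsilon_1}\norm{p(\bar y^r)-\bar p^r}{L^2(V)}^2+\tfrac{1}{2\varepsilon_2}\norm{y(\bar\bu^r,\tildeyint)-\bar y^r}{L^2(H)}^2+\tfrac{1}{2\varepsilon_3}\Delta_{\yint}^2
\end{align*}
but imports it verbatim from \cite[Theorem 3.7]{kartmann2024certifiedmodelpredictivecontrol} (where the $\mU_T(\tint)$-norm on the state error in the paper's display is evidently a typo for the $L^2(H)$-norm), whereas you reconstruct exactly this inequality from first principles: cross-testing \eqref{eq:FOM_opcond} and \eqref{eq:ROM_opcond} so that the common $g_T$ cancels, routing the errors through the auxiliary FOM pair $\hat y=y(\bar\bu^r,\tint,\tildeyint)$, $\hat p=p(\bar y^r)$, applying the duality identity to the exactly-solving pair $(z,w)$ (your signs and the identity $\int\scalprod{B'w}{e_\bu}{U}\,\dt=\int\scalprod{e_y}{z}{H}\,\dt-\scalprod{w(\tint)}{\yint-\tildeyint}{H}$ check out, since $w(\tint+T)=0$ and $z(\tint)=\yint-\tildeyint$), and proving $\norm{w(\tint)}{H}^2\leq C_\bu\norm{e_y}{L^2(H)}^2$ by the rescaled energy estimate, which is indeed the origin of $C_\bu=\nicefrac{e^{2\eta_HT}}{2\eta_V}$; the normalization $\norm{\cdot}{H}\leq\norm{\cdot}{V}$ that you flag is needed there to absorb the $\eta_V\norm{\cdot}{L^2(H)}^2$ term, and the paper uses the same convention implicitly elsewhere. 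What your route buys is a self-contained argument that makes visible where each term of $\bar\Delta_\bu$ and $\bar\Delta_{y,H}$ comes from; what the paper's route buys is brevity, at the price of deferring the core perturbation argument to an external reference.
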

\begin{proof}
    In the proof of \cite[Theorem 3.7]{kartmann2024certifiedmodelpredictivecontrol} the following inequality was proven
    \begin{align*}
        &(\lambda-\tfrac{\varepsilon_1}{2})\norm{\bar \bu-\bar \bu^r}{\mU_T(\tint)}^2+ (1-\tfrac{\varepsilon_2}{2}-\tfrac{\varepsilon_3C_\bu}{2})\norm{\bar y-\bar y^r}{\mU_T(\tint)}^2\\
       & \leq  \tfrac{\norm{B}{L^\infty}^2}{2\varepsilon_1} \norm{p(\bar y^r)-\bar p^r}{L^2(\tint,\tint+T;V)}^2 + \tfrac{1}{2\varepsilon_2}\norm{y(\bar \bu^r,\tildeyint)-\bar y^r}{L^2(\tint,\tint+T;H)}^2+\tfrac{1}{2\varepsilon_3}\Delta_{\yint}^2
    \end{align*}
    for $\varepsilon_1, \varepsilon_2, \varepsilon_3 >0$.
    By \eqref{eq:adjstate_apost_CHL2V} in \cref{lem:ResAdjest} for $\tilde y=y=\bar y^r$, we estimate
    \begin{align*}
        \norm{p(\bar y^r)-\bar p^r}{L^2(\tint,\tint+T;V)}^2=\norm{p(\bar y^r)-p^r(\bar y^r)}{L^2(\tint,\tint+T;V)}^2\leq \Delta_p^{{2}}(0,\bar p^r,\bar y^r),
    \end{align*}
    and by \eqref{eq:state_apost_CHL2V} in \cref{Lemma:ResBasStateError} for $\bu^r=\bu=\bar \bu^r$ and $\Delta_{\yint}=0$, we have
    \begin{align*}
        \norm{y(\bar \bu^r,\tildeyint)-\bar y^r}{{L^2(\tint,\tint+T;V)}}^2= \norm{y(\bar \bu^r, \tildeyint)-y^r(\bar \bu^r, \tildeyint)}{{L^2(\tint,\tint+T;V)}}^2  \leq \Delta_y^{{2}}(0,0,\bar y^r,\bar \bu^r).
    \end{align*}
    Now, choosing $\varepsilon_1 =\lambda $, $\varepsilon_2=1 $, $\varepsilon_3= \nicefrac{1}{C_\bu}$ implies \eqref{eq:opt_cont_est} and the choice $\varepsilon_1 =2\lambda $, $\varepsilon_2=\nicefrac{1}{2} $, $\varepsilon_3= \nicefrac{1}{2C_\bu}$ implies \eqref{eq:opt_out_est}.
\end{proof}
\begin{remark}
    In \cref{Lemma:ResBasStateError}, the constants $(C_{1,y}, C_{1,y}(t))$ can be improved by the factor $\nicefrac{1}{2}$ if $\bu=\bu^r$, i.e., $\Delta_\bu=0$. Similarly, the constant $C_p$ in \cref{lem:ResAdjest} can be reduced by the factor $\nicefrac{1}{2}$ if $y=\tilde y$, i.e., $\Delta_y=0$.
\end{remark}

\subsubsection{Value and cost function estimates}\label{subsec:APOST_val}
%
The following result extends the value function estimator from \cite{ran,vexler2008adaptive} to the general convex regularization function $g_T$ from \cref{rem:optcond} satisfying \cref{ass:running_cost} and perturbed initial values with \eqref{eq:perturb_init}.
\begin{theorem}[Optimal value function error representation]
    \label{lem:errorest_optimalval_diff_initguess_prepare}
    Let \cref{ass:pde,ass:running_cost} be valid and let $\bar x=(\bar y, \bar \bu, \bar p)\in\mX_T(\tint),\bar x^r=(\bar y^r, \bar \bu^r, \bar p^r)\in\mX^r_T(\tint)$ be the solutions of the FOM \eqref{eq:FOML} and ROM \eqref{eq:ROML} optimality systems for $\yint,\tildeyint\in H$, respectively. Then, we can bound the error in the optimal value function as
\begin{align}
    \label{eq:errorest_value_initguess_prepare}
    A_1+A_2-A_3\leq V_T(\tint,\yint)-  V^r_T(\tint, \tildeyint)\leq  A_1+A_2+A_3,
\end{align}
where $A_1$, $A_2$, and $A_3$ are defined as 
\begin{align*}
    A_1 \coloneqq &  \tfrac{1}{2}\inf\big\{L'_y(\bar x^r; \tildeyint)(\bar y-y^r)+L_p'(\bar x^r;\tildeyint)(\bar p-p^r)\,\big|\,y^r,p^r\in \mY^r_T(\tint)\big\},\\
    A_2 \coloneqq & {\langle {\tildeyint}-\yint, \bar { p}^r(\tint)\rangle}_H + \tfrac{1}{2} {\langle \yint-{ \tildeyint}, (\bar p -\bar { p}^r)(\tint)\rangle}_H,\\
    A_3 \coloneqq & \tfrac{1}{2}\big(L'_{\bu}(\bar x^r; \tildeyint)-L'_{\bu}(\bar x; \yint)\big)(\bar \bu -\bar \bu ^r).
\end{align*}

\end{theorem}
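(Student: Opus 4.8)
The plan is to reduce the value-function difference to the smooth Lagrangian $L$ plus the non-smooth increment $g_T(\bar\bu)-g_T(\bar\bu^r)$, and then to exploit that $L(\,\cdot\,;\yint)$ is quadratic in $x=(y,\bu,p)$. First I would check that, evaluated at the respective minimizers, the constraint and initial-value contributions of $L$ vanish. For the FOM this is immediate, since $\bar y(\tint)=\yint$ and $\bar y$ solves \eqref{eq:LTV}. For the ROM the dynamics residual is tested only against $V_r$ and $\bar p^r(t)\in V_r$, while the initial term is $\langle\tildeyint-\bar y^r(\tint),\bar p^r(\tint)\rangle_H=\langle\tildeyint-\Pi^H_{V_r}\tildeyint,\bar p^r(\tint)\rangle_H=0$ because $\tildeyint-\Pi^H_{V_r}\tildeyint$ is $H$-orthogonal to $V_r$. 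Hence $V_T(\tint,\yint)=L(\bar x;\yint)+g_T(\bar\bu)$ and $V^r_T(\tint,\tildeyint)=L(\bar x^r;\tildeyint)+g_T(\bar\bu^r)$, so that the error equals $\big(L(\bar x;\yint)-L(\bar x^r;\tildeyint)\big)+\big(g_T(\bar\bu)-g_T(\bar\bu^r)\big)$.

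Next I would isolate the data mismatch. Since only the initial term of $L$ depends on the datum, I would write
\[
L(\bar x;\yint)-L(\bar x^r;\tildeyint)=\big(L(\bar x;\yint)-L(\bar x^r;\yint)\big)+\langle\yint-\tildeyint,\bar p^r(\tint)\rangle_H,
\]
and then apply the exact secant identity $f(1)-f(0)=\tfrac12\big(f'(0)+f'(1)\big)$, valid for quadratic $f$, to $s\mapsto L(\bar x^r+s(\bar x-\bar x^r);\yint)$. This represents $L(\bar x;\yint)-L(\bar x^r;\yint)$ as half the sum of the total $x$-derivatives of $L$ at the two endpoints, applied to the increment $\bar x-\bar x^r$.

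The key simplification is provided by the two optimality systems. At the FOM endpoint, \eqref{eq:FOML1}--\eqref{eq:FOML2} annihilate $L'_y(\bar x;\yint)$ and $L'_p(\bar x;\yint)$ in the directions $\bar y-\bar y^r,\,\bar p-\bar p^r\in\mY_T(\tint)$, leaving only $L'_\bu(\bar x;\yint)(\bar\bu-\bar\bu^r)$. At the ROM endpoint I would use that $L'_y$ and $L'_\bu$ are independent of the datum, while $L'_p(\bar x^r;\yint)$ differs from $L'_p(\bar x^r;\tildeyint)$ only by the boundary pairing $\langle\yint-\tildeyint,(\bar p-\bar p^r)(\tint)\rangle_H$. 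By \eqref{eq:ROML1}--\eqref{eq:ROML2}, $L'_y(\bar x^r;\tildeyint)(y^r)=L'_p(\bar x^r;\tildeyint)(p^r)=0$ for all $y^r,p^r\in\mY^r_T(\tint)$, so $L'_y(\bar x^r;\tildeyint)(\bar y-\bar y^r)+L'_p(\bar x^r;\tildeyint)(\bar p-\bar p^r)$ is unchanged when $\bar y^r,\bar p^r$ are replaced by arbitrary reduced functions; this quantity equals $2A_1$ (the infimum is attained at every admissible choice). The boundary pairing $\tfrac12\langle\yint-\tildeyint,(\bar p-\bar p^r)(\tint)\rangle_H$ together with $\langle\yint-\tildeyint,\bar p^r(\tint)\rangle_H$ collects into $A_2$, and the remaining terms are the two control derivatives $\tfrac12\big(L'_\bu(\bar x;\yint)+L'_\bu(\bar x^r;\tildeyint)\big)(\bar\bu-\bar\bu^r)$.

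Finally I would treat the non-smooth control contribution, which is where the two-sided bound arises. Setting $a:=L'_\bu(\bar x;\yint)(\bar\bu-\bar\bu^r)$ and $b:=L'_\bu(\bar x^r;\tildeyint)(\bar\bu-\bar\bu^r)$, testing the FOM inequality \eqref{eq:FOML3} with $\bu=\bar\bu^r$ gives $a\le g_T(\bar\bu^r)-g_T(\bar\bu)$, and testing the ROM inequality \eqref{eq:ROML3} with $\bu=\bar\bu$ gives $b\ge g_T(\bar\bu^r)-g_T(\bar\bu)$. Consequently the quantity $\tfrac12(a+b)+\big(g_T(\bar\bu)-g_T(\bar\bu^r)\big)$ satisfies $\tfrac12(a+b)+g_T(\bar\bu)-g_T(\bar\bu^r)-A_3=a+g_T(\bar\bu)-g_T(\bar\bu^r)\le0$ and the symmetric lower bound with $b$, where $A_3=\tfrac12(b-a)\ge0$. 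This places the control contribution in $[-A_3,A_3]$ and yields $A_1+A_2-A_3\le V_T(\tint,\yint)-V^r_T(\tint,\tildeyint)\le A_1+A_2+A_3$. I expect the main obstacle to be the careful bookkeeping of the data-dependent boundary terms (the interplay of $\yint$, $\tildeyint$, and $\Pi^H_{V_r}$) that identifies $A_2$ with the stated sign, together with choosing the correct test directions in the two variational inequalities so that the non-smooth part comes out symmetric around $A_3$; quadraticity of $L$ and exactness of the secant rule are what keep the representation an identity rather than an inequality.
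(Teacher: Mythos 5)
Your proof is correct and follows essentially the same route as the paper's: the same reduction to the quadratic Lagrangian plus the nonsmooth increment, the same exact-secant (trapezoidal) identity in place of the paper's fundamental-theorem-plus-cancellation step, the same use of the FOM/ROM optimality systems to annihilate the state and adjoint derivatives and to produce the infimum structure of $A_1$, and the same two variational-inequality tests ($\bu=\bar\bu^r$ in the FOM, $\bu=\bar\bu$ in the ROM), with your symmetric placement of the control contribution in $[-A_3,A_3]$ merely streamlining the paper's two separate upper- and lower-bound computations. One remark: like the paper's own proof, you arrive at the boundary term $\langle \yint-\tildeyint,\bar p^r(\tint)\rangle_H+\tfrac12\langle \yint-\tildeyint,(\bar p-\bar p^r)(\tint)\rangle_H$, whose first summand has the opposite sign to the first summand of $A_2$ as stated in the theorem; this sign inconsistency exists between the paper's statement and its proof (a Riccati-type sanity check confirms the sign you and the proof derive), so it is inherited rather than introduced by you.
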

\begin{proof}
   By definition
    \begin{align*}
         L(\bar x;\yint)+g_T(\bar \bu)-L(\bar { x}^r; \tildeyint)-g_T(\bar \bu^r) &= J_T(\bar \bu;\tint,\yint)-J^r_T(\bar { \bu}^r;\tint,\tildeyint)\\
         &= V_T(\tint,\yint)- V^r_T(\tint,\tildeyint),
    \end{align*}
    and by adding $\pm {\langle \yint-{\reduce {\bar y}}(\tint), \bar { p}^r(\tint)\rangle}_H$, we get
    \begin{align}
         L(\bar x;\yint)-L(\bar { x}^r;\tildeyint)  
         & =  L(\bar x;\yint) - L(\bar { x}^r; \yint) + {\langle \yint-{\tildeyint}, \bar { p}^r(\tint)\rangle}_H.\label{eq:valueapost_help1}
    \end{align}
    For $e=(e_y,e_\bu,e_p)=\bar x-\bar { x}^r\in\mX_T(\tint)$, we use the fundamental theorem of calculus to get
    \begin{align}
        \begin{aligned}
            L(\bar x;\yint)-L(\bar { x}^r;\tildeyint)&\stackrel{\eqref{eq:valueapost_help1}}{=}L(\bar x^r+e;\yint)-L(\bar { x}^r;\yint)+{\langle \yint-{\tildeyint},\bar { p}^r(\tint)\rangle}_H  \\
            & = \int_{0}^1 L'(\bar { x}^r+t e;\yint)(e)\,\dt +{\langle \yint-{\tildeyint}, \bar { p}^r(\tint)\rangle}_H.\\
        \end{aligned}
        \label{eq:valueapost_helper2}
    \end{align}
    For the part depending on $g_T$, we obtain from the FOM optimality conditions \eqref{eq:FOML3} for $\bu = \bar \bu^r$
    \begin{align}
        g_T(\bar \bu) -g_T(\bar \bu^r)\leq L'_{\bu}(\bar x;\yint)(-e_\bu).
        \label{eq:valueapost_helper3}
    \end{align}
    Combining \eqref{eq:valueapost_helper2}, \eqref{eq:valueapost_helper3} and the \FOM\  optimality conditions \eqref{eq:FOML1} for $y=e_y$, \eqref{eq:FOML2} for $p=e_p$, leads to
    \begin{align*}
        &L(\bar x;\yint)+g_T(\bar \bu)-L(\bar x^r;\tildeyint)-g_T(\bar \bu^r)\\
        &\leq \int_0^1 L'(\bar x^r+t e;\yint)(e)\,\dt+{\langle \yint- {\tildeyint}, \bar p^r(\tint)\rangle}_H\pm \tfrac{1}{2}L'(\bar x^r;\yint)(e)\\
         &\quad+L'_{\bu}(\bar x;\yint)(-e_\bu)+\tfrac{1}{2}L'_y(\bar x;\yint)(-e_y)+\tfrac{1}{2}L'_p(\bar x;\yint)(-e_p)\\
        &=\int_0^1 L'(\bar x^r+t e;\yint)(e)\,\dt+{\langle \yint-{\tildeyint}, \bar p^r(\tint)\rangle}_H\pm \tfrac{1}{2}L'(\bar x^r;\yint)(e)\\
         &\quad-\tfrac{1}{2} L'(\bar x;\yint)(e)+\tfrac{1}{2}L'_{\bu}(\bar x;\yint)(-e_\bu).
    \end{align*}
    Due to the quadratic structure of the Lagrange functional $L$, the integral and the two terms $-\tfrac{1}{2}L'(\bar x;\yint)(e)$ and $-\tfrac{1}{2}L'(\bar x^r;\yint)(e)$ cancel (cf. \cite{ran}), and we obtain
    \begin{align*}
       V_T(\tint,\yint)- V^r_T(\tint,\tildeyint)
        &\leq {\langle {\tildeyint}-\yint,\bar p^r(\tint)\rangle}_H+\tfrac{1}{2}L'(\bar x^r;\yint)(e)+\tfrac{1}{2}L'_{\bu}(\bar x; \yint)(-e_\bu).
    \end{align*}
    In addition, we have
    \begin{align} \label{eq:valueapost_helper4}
        \tfrac{1}{2}L'(\bar { x}^r;\yint)(e) = \tfrac{1}{2}L'(\bar { x}^r; \tildeyint)(e) + \tfrac{1}{2}{\langle \yint- { \tildeyint},e_p(\tint)\rangle}_H,
    \end{align}
    which leads to
    \begin{align*}
        &V_T(\tint,\yint)- V^r_T(\tint,\tildeyint)\\
        &\leq{\langle \yint-{\tildeyint}, \bar { p}^r(\tint)\rangle}_H + \tfrac{1}{2} {\langle \yint- {\tildeyint},e_p(\tint)\rangle}_H
        + \tfrac{1}{2}L'(\bar x^r; \tildeyint)(e) + \tfrac{1}{2}L'_{\bu}(\bar x; \yint)(-e_\bu)\\
        &=A_2
        + \tfrac{1}{2}\big(L'_{\bu}(\bar x^r; \tildeyint)-L'_{\bu}(\bar x; \yint)\big)(e_u)
+ \tfrac{1}{2}L'_y(\bar x^r; \tildeyint)(e_y)+\tfrac{1}{2}L'_p(\bar x^r; \tildeyint)(e_p)\\
        &= A_2+ A_3+\tfrac{1}{2}L'_y(\bar x^r; \tildeyint)(e_y)+\tfrac{1}{2}L'_p(\bar x^r; \tildeyint)(e_p).
    \end{align*}
    Further, using \eqref{eq:ROML1} and \eqref{eq:ROML2}, we obtain 
    \begin{align}
        \label{eq:valueapost_helper5}
        \begin{aligned}
            \tfrac{1}{2}L_y'(\bar x^r; \tildeyint)(\bar y-\bar y^r)&= \tfrac{1}{2}L_y'(\bar x^r; \tildeyint)(\bar y- y^r),\\
            \tfrac{1}{2}L_p'(\bar x^r; \tildeyint)(\bar p-\bar p^r)&= \tfrac{1}{2}L_p'(\bar x^r; \tildeyint)(\bar p- p^r)
        \end{aligned}
    \end{align}
    for arbitrary $ y^r, p^r\in \mY^r_T(\tint)$. This implies the structure of $A_1$ and therefore the upper bound in \eqref{eq:errorest_value_initguess_prepare}.
    For the lower bound, we consider similar arguments as for the upper bound using \eqref{eq:ROML3} for $\bu=\bar \bu$, \eqref{eq:valueapost_help1} and \eqref{eq:valueapost_helper4}, to estimate
    \begin{align*}
        &L(\bar x;\yint)+g_T(\bar \bu)-L(\bar x^r;\tildeyint)-g_T(\bar \bu^r)\\
        &\quad\geq\int_0^1 L'(\bar x^r+t e;\yint)(e)\,\dt+{\langle \yint-{\tildeyint}, \bar p^r(\tint)\rangle}_H\pm \tfrac{1}{2}L'(\bar x^r;\yint)(e)\\
        &\qquad-L'_\bu(\bar x^r;\tildeyint)(e_\bu)+\tfrac{1}{2}L'_y(\bar x;\yint)(-e_y)+\tfrac{1}{2}L'_p(\bar x;\yint)(-e_p)\\
        &\quad= {\langle \yint-{\tildeyint}, \bar p^r(\tint)\rangle}_H+ \tfrac{1}{2} {\langle \yint- {\tildeyint},e_p(\tint)\rangle}_H+\tfrac{1}{2}L'(\bar x^r;\tildeyint)(e)\\
        &\qquad - L_\bu'(\bar x^r;\tildeyint)(e_\bu)+\tfrac{1}{2}L_\bu'(\bar x;\yint)(e_\bu)\\
        &\quad= A_2-A_3+ \tfrac{1}{2}L'_y(\bar x^r; \tildeyint)(e_y)+\tfrac{1}{2}L'_p(\bar x^r; \tildeyint)(e_p).
    \end{align*}
    Now \eqref{eq:valueapost_helper5} implies the lower bound in \eqref{eq:errorest_value_initguess_prepare}.
\end{proof}
To obtain a computable bound, we invoke the error estimates from the last subsection.
\begin{corollary}[Value function error estimate $\Delta_{V_T}$]
    \label{cor:errorest_optimalval_diff_initguess}
    In the situation of \cref{lem:errorest_optimalval_diff_initguess_prepare}, we have the bound
    \begin{equation}
        \label{eq:errorest_value_initguess}
        |V_T(\tint,\yint)-  V^r_T(\tint, \tildeyint)|\leq \Delta_{V_T}(\tint, \tildeyint, \Delta_{\yint})
    \end{equation}
    with
    \begin{align}
        \label{eq:valueest_line1}
        \begin{aligned}
            \Delta_{V_T}(\tint, \tildeyint, \Delta_{\yint})&\coloneqq\tfrac{1}{2}\sqrt{\norm{\Res_p(\bar p^r,\bar y^r)}{L^2(V')}^2}\bar \Delta_{y}\\
            &\quad+\tfrac{1}{2}\sqrt{\norm{\Res_y(\bar y^r,\bar \bu^r)}{L^2(V')}^2+\norm{\bar y^r(\tint)-\tildeyint}{H}^2}\bar \Delta_{p}\\
            &\quad+\tfrac{1}{2}\norm{B}{L^\infty} \bar\Delta_{p}\bar \Delta_{\bu}+\Delta_{\yint}\norm{\bar { p}^r(\tint)}{H}+ \tfrac{1}{4}\Delta_{\yint}\bar \Delta_{p},
        \end{aligned}
    \end{align}
    and $ \bar\Delta_y= \bar\Delta_y(\tint,\tildeyint)$, $ \bar\Delta_p= \bar\Delta_p(\tint,\tildeyint)$ defined as
    \begin{equation}\label{eq:optimal_error_ests1}
        \bar \Delta_y \coloneqq \Delta_{y}(\bar \Delta_\bu, \Delta_{\yint}, \bar y^r, \bar \bu^r), \quad  \bar \Delta_p\coloneqq \Delta_{p}(\bar\Delta_{y,H}, \bar p^r, \bar y^r)
    \end{equation}
    for $\Delta_{\yint},\Delta_{y},\Delta_p$, $\bar \Delta_\bu$ and $\bar\Delta_{y,H}$ as in \eqref{eq:perturb_init}, \cref{Lemma:ResBasStateError}, \cref{lem:ResAdjest} and \cref{theo:oc_est}, respectively.
\end{corollary}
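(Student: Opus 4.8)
The plan is to reduce everything to \cref{lem:errorest_optimalval_diff_initguess_prepare}, which already supplies the two-sided representation $A_1+A_2-A_3\le V_T(\tint,\yint)-V^r_T(\tint,\tildeyint)\le A_1+A_2+A_3$ in \eqref{eq:errorest_value_initguess_prepare}. To convert this into the computable bound $|V_T-V^r_T|\le\Delta_{V_T}$, I observe that it suffices to bound $|A_1|$ and $|A_2|$ and to furnish an \emph{upper} bound on $A_3$: since $A_3$ appears with a $+$ sign in the upper estimate and with a $-$ sign in the lower one, in both directions only $A_3\le(\cdots)$ is used, and no lower bound on $A_3$ (which would involve $\bar\Delta_\bu^2$) is needed. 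Each piece is then controlled by the estimators of \cref{Lemma:ResBasStateError,lem:ResAdjest,theo:oc_est} together with the perturbation bound \eqref{eq:perturb_init}.

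First I would put the functionals in $A_1$ into residual form. With $e_y=\bar y-\bar y^r$, $e_p=\bar p-\bar p^r$, $e_\bu=\bar\bu-\bar\bu^r$, an integration by parts in the $\partial_t$-term, using the terminal condition $\bar p^r(\tint+T)=0$, cancels the boundary contributions and gives $L'_y(\bar x^r;\tildeyint)(e_y)=\int_{\tint}^{\tint+T}\langle\Res_p(\bar y^r,\bar p^r),e_y\rangle_{V',V}\,\dt$, whereas directly from the definition of $L$ one obtains $L'_p(\bar x^r;\tildeyint)(e_p)=\int_{\tint}^{\tint+T}\langle\Res_y(\bar y^r,\bar\bu^r),e_p\rangle_{V',V}\,\dt+\langle\tildeyint-\bar y^r(\tint),e_p(\tint)\rangle_H$. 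Moreover, the reduced optimality conditions \eqref{eq:ROML1}, \eqref{eq:ROML2} make $L'_y(\bar x^r;\tildeyint)$ and $L'_p(\bar x^r;\tildeyint)$ vanish on $\mY^r_T(\tint)$, so (as already used in \eqref{eq:valueapost_helper5}) the value of $A_1$ is independent of the admissible $(y^r,p^r)$ and equals $\tfrac12\big[L'_y(\bar x^r;\tildeyint)(e_y)+L'_p(\bar x^r;\tildeyint)(e_p)\big]$.

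Next I would apply Cauchy--Schwarz and insert the estimators. The $L'_y$-part yields $\tfrac12\norm{\Res_p(\bar y^r,\bar p^r)}{L^2(V')}\norm{e_y}{L^2(V)}\le\tfrac12\norm{\Res_p(\bar y^r,\bar p^r)}{L^2(V')}\bar\Delta_y$ via \cref{Lemma:ResBasStateError}, giving the first term of \eqref{eq:valueest_line1}. For the $L'_p$-part the clean step is to pair $(\Res_y(\bar y^r,\bar\bu^r),\tildeyint-\bar y^r(\tint))$ with $(e_p,e_p(\tint))$ in $L^2(V')\times H$ and apply a single Cauchy--Schwarz, so that the bound involves $\big(\norm{e_p}{L^2(V)}^2+\norm{e_p(\tint)}{H}^2\big)^{1/2}$; the \emph{combined} adjoint estimate \eqref{eq:adjstate_apost_CHL2V} bounds this factor by $\bar\Delta_p$, producing the second term of \eqref{eq:valueest_line1}. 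For $A_2$, Cauchy--Schwarz with $\norm{\yint-\tildeyint}{H}\le\Delta_{\yint}$ and the improved pointwise estimate \eqref{eq:adjstate_apost_Ht} for $\norm{e_p(\tint)}{H}$ gives the two initial-value contributions $\Delta_{\yint}\norm{\bar p^r(\tint)}{H}$ and a constant multiple of $\Delta_{\yint}\bar\Delta_p$.

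Finally, for $A_3$ I would use the sign structure of the control term: a direct computation from the definition of $L'_\bu$ gives $\big(L'_\bu(\bar x^r;\tildeyint)-L'_\bu(\bar x;\yint)\big)(e_\bu)=-\lambda\norm{e_\bu}{L^2(U)}^2-\int_{\tint}^{\tint+T}\langle B'e_p,e_\bu\rangle_U\,\dt$, whence, discarding the nonpositive leading term, $A_3\le\tfrac12\norm{B}{L^\infty}\norm{e_p}{L^2(V)}\norm{e_\bu}{L^2(U)}\le\tfrac12\norm{B}{L^\infty}\bar\Delta_p\bar\Delta_\bu$ by \cref{theo:oc_est}. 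Collecting the five contributions reproduces $\Delta_{V_T}$ in \eqref{eq:valueest_line1}. I expect the main obstacle to be the residual identifications of the second paragraph — in particular arranging the integration-by-parts boundary terms so that they cancel and $\Res_p$, $\Res_y$ appear cleanly — together with the observation that the favourable sign of $-\tfrac\lambda2\norm{e_\bu}{L^2(U)}^2$ lets $A_3$ be treated with a one-sided estimate; the remaining Cauchy--Schwarz steps and substitutions are routine.
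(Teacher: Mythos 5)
Your proposal is correct and follows essentially the same route as the paper's proof: the identical reduction $|V_T(\tint,\yint)-V^r_T(\tint,\tildeyint)|\leq |A_1|+|A_2|+A_3$ exploiting that only a one-sided bound on $A_3$ is needed, the same residual identifications $L'_y(\bar x^r;\tildeyint)(e_y)=\langle \Res_p(\bar y^r,\bar p^r),e_y\rangle$ and $L'_p(\bar x^r;\tildeyint)(e_p)=\langle \Res_y(\bar y^r,\bar\bu^r),e_p\rangle+\langle \tildeyint-\bar y^r(\tint),e_p(\tint)\rangle_H$ with the choice $y^r=\bar y^r$, $p^r=\bar p^r$ (the paper's $Z$-norm pairing is exactly your combined Cauchy--Schwarz against $\bar\Delta_y$ and $\bar\Delta_p$), and the same sign-based treatment of $A_3$ discarding the nonpositive $-\lambda\norm{e_\bu}{\mU_T(\tint)}^2$ term before invoking \cref{theo:oc_est}. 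The only place you are less precise than the statement is the coefficient of $\Delta_{\yint}\bar\Delta_p$ (\enquote{a constant multiple} rather than $\tfrac14$), but this matches the paper's own slightly loose use of the improved estimate \eqref{eq:adjstate_apost_Ht}, so it is not a substantive gap.
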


\begin{proof}
    From \eqref{eq:errorest_value_initguess_prepare}, it follows
    \begin{equation*}
        |V_T(\tint,\yint)-  V^r_T(\tint, \tildeyint)|\leq \max\{A_1+A_2+A_3,-A_1-A_2+A_3\}\leq |A_1|+|A_2|+A_3.
    \end{equation*}
    By \cref{theo:oc_est}, we can bound the error in the optimal control by $\bar\Delta_\bu$. Since we have $\bar y = y(\bar \bu,\yint)$ and $\bar y^r=y^r(\bar \bu^r, \tildeyint)$, we can estimate the optimal state error by \cref{Lemma:ResBasStateError} using $\bar \Delta_{y}$ in \eqref{eq:optimal_error_ests1} as
    \begin{align}
        \label{eq:help22}
        \norm{(\bar y -\bar y^r)(\tint+T)}{H}^2+ \norm{\bar y -\bar y^r}{L^2(\tint,\tint+T;V)}^2\leq \bar \Delta_y^2.
    \end{align}
    For the adjoint state $\bar p = p(\bar y)$ and $\bar p^r=p^r(\bar y^r)$, we obtain by \cref{lem:ResAdjest} for $\Delta_y=\bar \Delta_{y,H}$ in \eqref{eq:opt_out_est} and the definition of $\bar \Delta_{ p}$ in \eqref{eq:optimal_error_ests1}
    \begin{align}
        \label{eq:cor:errorest_optimalval_diff_initguess_help1}
        \norm{(\bar p -\bar p^r)(\tint)}{H}^2+ \norm{\bar p -\bar p^r}{L^2(\tint,\tint+T;V)}^2\leq \bar \Delta^2_p.
    \end{align}
    For the error in the optimal gradient, we obtain using the structure of $L'_\bu(x^r;\yint)$ ($L'_\bu(x,\tildeyint)$) in \eqref{eq:FOM_opcond} (\eqref{eq:ROM_opcond}), the optimal control estimate \eqref{eq:opt_cont_est} and \eqref{eq:cor:errorest_optimalval_diff_initguess_help1}
    \begin{align}
       \nonumber
            A_3&= \tfrac{1}{2}\big(L'_{\bu}(\bar x^r; \tildeyint)-L'_{\bu}(\bar x; \yint)\big)(e_\bu)\stackrel{\eqref{eq:FOM_opcond},\eqref{eq:ROM_opcond}}{=}\tfrac{1}{2}\langle B'(\bar p^r-\bar p)+\lambda(\bar \bu^r-\bar \bu) , \bar \bu-\bar \bu^r\rangle_{\mU_T(\tint)}\\
            &\leq \tfrac{1}{2} \langle B'(\bar p^r-\bar p), \bar \bu-\bar \bu^r\rangle_{\mU_T(\tint)}-\lambda\norm{\bar \bu^r-\bar \bu}{\mU_T(\tint)}^2\stackrel{\eqref{eq:opt_cont_est}, \eqref{eq:cor:errorest_optimalval_diff_initguess_help1}}{\leq} \tfrac{1}{2}\norm{B}{L^\infty} \bar \Delta_{p}\bar \Delta_\bu.   \label{eq:helper} 
    \end{align}
    Choosing $y^r=\bar y^r $, $p^r= \bar p^r$ in the term $A1$ in \eqref{eq:errorest_value_initguess_prepare}, and upper bounding the second line of \eqref{eq:errorest_value_initguess_prepare} using \eqref{eq:helper}, \eqref{eq:perturb_init} leads to 
    \begin{align*}
        |V_T(\tint,\yint)-  V^r_T(\tint, \tildeyint)|\leq &   
       \tfrac{1}{2}|L'_y(\bar x^r; \tildeyint)(\bar y-\bar y^r)|+\tfrac{1}{2}|L_p'(\bar x^r;\tildeyint)(\bar p-\bar p^r)| \\
        & +\tfrac{1}{2}\norm{B}{L^2} \bar\Delta_{p}\bar\Delta_{\bu}  +\Delta_{\yint}{\norm{\bar { p}^r(\tint) }{H}}+ \tfrac{1}{4}\Delta_{\yint}\bar \Delta_p.
    \end{align*}
    For the last term, we have used that the factor of $\bar \Delta_p$ can be improved by $\nicefrac{1}{2}$ according to \eqref{eq:adjstate_apost_Ht}. Now we estimate the remaining derivatives of $L$. Note that $L_p'(\bar x^r;\tildeyint)\in \mY_T(\tint)'$ is continuous in $Z\coloneqq L^2(\tint,\tint+T;V)\cap C([\tint,\tint+T];H)\supset \mY_T(\tint)$ with norm $\norm{v}{Z}^2\coloneqq \norm{v(\tint)}{H}^2 +\norm{v}{L^2(V)}^2$, since for all $v\in Z$
    \begin{align*}
        |L_p'(\bar x^r;\tildeyint)(v)|\leq &|\langle \Res_y(\bar y^r,\bar \bu^r),v\rangle_{L^2(\tint,\tint+T;V)}|+|\langle \bar y^r(\tint)-\tildeyint, v(\tint)\rangle_H|\\
        \leq & {\sqrt{\norm{\Res_y(\bar y^r,\bar \bu^r)}{L^2(\tint,\tint+T;V')}^2+\norm{\bar y^r(\tint)-\tildeyint}{H}^2}}\norm{v}{Z}.
    \end{align*}
    {Setting $v= \bar p-\bar p^r$}, we conclude by using the error estimate for the adjoint
    \begin{align*}
        |L_p'(\bar x^r;\tildeyint)(\bar p-\bar p^r)|&\leq \norm{L_p'(\bar x^r;\tildeyint)}{Z'}\norm{\bar p-\bar p^r}{Z}\\
        &\hspace{-2mm}\stackrel{\eqref{eq:cor:errorest_optimalval_diff_initguess_help1}}{\leq} \sqrt{\norm{\Res_y(\bar y^r,\bar \bu^r)}{L^2(\tint,\tint+T;V')}^2+\norm{\bar y^r(\tint)-\tildeyint}{H}^2 }\bar \Delta_{p}.
    \end{align*}
    For $|L'_y(\bar x^r; \tildeyint)(\bar y-\bar y^r)|$, similar arguments using the norm $\norm{v}{Z}^2\coloneqq \norm{v(\tint+T)}{H}^2 +\norm{v}{L^2(V)}^2$, and \eqref{eq:help22} leads to the result.
\end{proof}
The next result compares the FOM and ROM cost at a fixed control $\bu\in\mU_T(\tint)$.
\begin{theorem}[Cost function error representation]
    \label{lem:errorest_optimalval_not_optimal}
    Suppose that \cref{ass:pde,ass:running_cost} hold and let $\bu\in\mU_T(\tint)$. For $x(\bu)\coloneqq(y(\bu),\bu,p(y(\bu)))\in\mX_T(\tint)$ and $x^r(\bu)\coloneqq(y^r(\bu),\bu,p^r(y^r(\bu)))\in\mX^r_T(\tint)$, we have the following error representation
    \begin{align}\nonumber
        &J_T(\bu ;\tint,\yint)- J^r_T(\bu;\tint,\tildeyint)\\
        &\label{eq:infCostest}\quad=\tfrac{1}{2}\inf\big\{L_y'(x^r(\bu);\tildeyint)(y(\bu)-y^r)+L_p'(x^r(\bu);\tildeyint)(p(y(\bu))-p^r)\,\big|\,y^r,p^r\in \mY^r_T(\tint)\big\}\\
        &\qquad+ {\langle {\tildeyint}-\yint, p^r(y^r(\bu))(\tint)\rangle}_H + \tfrac{1}{2} {\langle \yint-{ \tildeyint}, (p(y(\bu))-p^r(y^r(\bu)))(\tint)\rangle}_H.\nonumber
    \end{align}
\end{theorem}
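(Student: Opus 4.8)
The plan is to mirror the argument of \cref{lem:errorest_optimalval_diff_initguess_prepare}, but to exploit that here the \emph{same} control $\bu$ enters both models, so that the non-smooth parts $g_T(\bu)$ cancel identically and no variational inequality is needed. First I would record that $x(\bu)$ and $x^r(\bu)$ make the constraint and initial terms of the Lagrangian harmless. Since $y(\bu)$ solves \eqref{eq:LTV} with $y(\tint)=\yint$, the pairing $\langle\Res_y(y(\bu),\bu),p\rangle_{V',V}$ and the term $\langle\yint-y(\tint),p(\tint)\rangle_H$ in $L(x(\bu);\yint)$ vanish, whence $L(x(\bu);\yint)=J_T(\bu;\tint,\yint)-g_T(\bu)$. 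For the reduced triple the analogous terms vanish by Galerkin orthogonality: the reduced residual $\Res_y(y^r(\bu),\bu)(t)$ is orthogonal to $V_r\ni p^r(y^r(\bu))(t)$, and $y^r(\bu)(\tint)=\Pi^H_{V_r}\tildeyint$ together with $p^r(y^r(\bu))(\tint)\in V_r$ kills $\langle\tildeyint-y^r(\tint),p^r(\tint)\rangle_H$, giving $L(x^r(\bu);\tildeyint)=J^r_T(\bu;\tint,\tildeyint)-g_T(\bu)$. Subtracting, the $g_T(\bu)$ contributions cancel and
\[
J_T(\bu;\tint,\yint)-J^r_T(\bu;\tint,\tildeyint)=L(x(\bu);\yint)-L(x^r(\bu);\tildeyint).
\]

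Next I would isolate the initial-value mismatch exactly as in \eqref{eq:valueapost_help1}: only the initial term of $L$ depends on the prescribed datum, so $L(x^r(\bu);\tildeyint)=L(x^r(\bu);\yint)-\langle\yint-\tildeyint,p^r(y^r(\bu))(\tint)\rangle_H$, which produces the pure boundary contribution that becomes the $\langle\tildeyint-\yint,p^r(y^r(\bu))(\tint)\rangle_H$ summand of \eqref{eq:infCostest}. For the remaining difference $L(x(\bu);\yint)-L(x^r(\bu);\yint)$ I would apply the fundamental theorem of calculus to $e=x(\bu)-x^r(\bu)=(e_y,0,e_p)$ and use that $L(\cdot;\yint)$ is quadratic in $x$, so its derivative is affine and the trapezoidal identity $\int_0^1 L'(x^r(\bu)+te;\yint)(e)\,\dt=\tfrac12 L'(x(\bu);\yint)(e)+\tfrac12 L'(x^r(\bu);\yint)(e)$ holds exactly.

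The decisive simplification, and the reason this statement is genuinely easier than \cref{lem:errorest_optimalval_diff_initguess_prepare}, is that $e_\bu=0$: the control component drops out of every directional derivative, so $L'(\cdot;\yint)(e)=L'_y(\cdot;\yint)(e_y)+L'_p(\cdot;\yint)(e_p)$ and there is no $A_3$-type control-gradient remainder. Because $x(\bu)$ solves the FOM state and adjoint equations, the identities $L'_p(x(\bu);\yint)(\cdot)=0$ and $L'_y(x(\bu);\yint)(\cdot)=0$ hold for all test functions — these are precisely the weak forms of \eqref{eq:LTV} and \eqref{eq:FOM_opsys} and do \emph{not} require $\bu$ to be optimal — so $L'(x(\bu);\yint)(e)=0$ and only $\tfrac12 L'(x^r(\bu);\yint)(e)$ survives. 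I would then shift this term to the datum $\tildeyint$ via the analogue of \eqref{eq:valueapost_helper4}, namely $L'(x^r(\bu);\yint)(e)=L'(x^r(\bu);\tildeyint)(e)+\langle\yint-\tildeyint,e_p(\tint)\rangle_H$, which supplies the last summand of \eqref{eq:infCostest}.

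Finally I would invoke the reduced state and adjoint equations in the form $L'_p(x^r(\bu);\tildeyint)(p^r)=0$ and $L'_y(x^r(\bu);\tildeyint)(y^r)=0$ for all $y^r,p^r\in\mY^r_T(\tint)$ (the reduced counterparts of \eqref{eq:ROML1}--\eqref{eq:ROML2}, again valid for the fixed, not necessarily optimal, $\bu$). Since $y^r(\bu),p^r(y^r(\bu))\in\mY^r_T(\tint)$, the reduced components of $e$ contribute nothing, so that $L'_y(x^r(\bu);\tildeyint)(e_y)+L'_p(x^r(\bu);\tildeyint)(e_p)$ equals $L'_y(x^r(\bu);\tildeyint)(y(\bu)-y^r)+L'_p(x^r(\bu);\tildeyint)(p(y(\bu))-p^r)$ for \emph{every} choice of $y^r,p^r\in\mY^r_T(\tint)$; the quantity is therefore independent of these arguments, which is exactly why it may be written as the infimum in \eqref{eq:infCostest}. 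Collecting the three surviving contributions yields the claimed exact representation. I expect the only delicate points to be the bookkeeping of signs (which follow verbatim from the proof of \cref{lem:errorest_optimalval_diff_initguess_prepare}) and the careful verification that the boundary pairings vanish through $H$-orthogonality onto $V_r$; the quadratic-exactness step is routine once $L$ is recognized as quadratic in $x$.
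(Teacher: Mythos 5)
Your proposal is correct in approach and is essentially the paper's own proof: the paper merely records the two facts you establish --- namely $J_T(\bu;\tint,\yint)-J^r_T(\bu;\tint,\tildeyint)=L(x(\bu);\yint)-L(x^r(\bu);\tildeyint)$, and the vanishing of $L'_{(y,p)}(x(\bu);\yint)$ on $\mY_T(\tint)\times\mY_T(\tint)$ and of $L'_{(y,p)}(x^r(\bu);\tildeyint)$ on $\mY^r_T(\tint)\times\mY^r_T(\tint)$ without any optimality of $\bu$ --- and then instructs the reader to repeat the argument of \cref{lem:errorest_optimalval_diff_initguess_prepare} with $e=(e_y,0,e_p)$. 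Your expansion of that instruction (cancellation of $g_T(\bu)$, Galerkin orthogonality plus $H$-orthogonality of $\tildeyint-\Pi^H_{V_r}\tildeyint$ for the reduced Lagrangian identity, trapezoidal exactness of the quadratic $L$, the datum-shift identity, disappearance of the $A_3$-type term because $e_\bu=0$, and the constancy in $(y^r,p^r)$ of the expression under the infimum) is accurate.

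There is, however, one concrete wrinkle you should not gloss over: the sign of the middle term. Your own (correct) identity $L(x^r(\bu);\tildeyint)=L(x^r(\bu);\yint)-\langle\yint-\tildeyint,p^r(y^r(\bu))(\tint)\rangle_H$ yields, after substitution into $J_T-J^r_T=L(x(\bu);\yint)-L(x^r(\bu);\tildeyint)$, the boundary contribution $+\langle\yint-\tildeyint,p^r(y^r(\bu))(\tint)\rangle_H$, which is the \emph{negative} of the summand $\langle\tildeyint-\yint,p^r(y^r(\bu))(\tint)\rangle_H$ printed in \eqref{eq:infCostest}; you assert they coincide. Carried through carefully, your argument proves the representation with middle term $\langle\yint-\tildeyint,p^r(y^r(\bu))(\tint)\rangle_H$. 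A one-dimensional sanity check ($A\equiv 0$, $B\equiv 0$, $g\equiv 0$, $\bu\equiv 0$, $V_r=V$, $\yint=0$, $\tildeyint=1$, so $y\equiv 0$, $p\equiv 0$, $\tilde y\equiv 1$, $\tilde p(t)=T-t$, and the infimum term vanishes) gives $J_T-J^r_T=-\tfrac{T}{2}$, which is matched by your sign ($-T+\tfrac{T}{2}$) but not by the printed one ($T+\tfrac{T}{2}$). So your derivation is sound and the printed sign is the problem; in fact the same inconsistency already sits inside the paper: in the proof of \cref{lem:errorest_optimalval_diff_initguess_prepare} the derivation produces $\langle\yint-\tildeyint,\bar p^r(\tint)\rangle_H$ (consistently with \eqref{eq:valueapost_help1}), but this is then identified with $A_2$, whose first term is defined with the opposite sign, and \eqref{eq:infCostest} inherits that flip. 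Your write-up should either state and prove the corrected formula or explicitly flag the discrepancy, rather than silently equating the two signs.
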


\begin{proof}
    It holds that $
    J_T(\bu ;\tint,\yint)- J^r_T(\bu;\tint,\tildeyint) = L(x(\bu);\yint)-L(x^r(\bu) ;\tildeyint)$ 
    and $L_{(y,p)}'(x(\bu);\yint)(y,p)=0$ for all $(y,p)\in \mY_T(\tint)\times\mY_T(\tint)$ and $L_{(y,p)}'(x^r(\bu);\tildeyint)(y^r,p^r)=0$ for all $(y^r,p^r)\in\mY_T^r(\tint)\times\mY_T^r(\tint)$. 
    Applying analogous argumentations as in the proof of \cref{lem:errorest_optimalval_diff_initguess_prepare} to $e=x(\bu)-x^r(\bu)= (e_y,0,e_p)$ leads to the desired result.
\end{proof}

\begin{corollary}[Cost function error estimate $\Delta_{J_T}$]
    \label{cor:costbound}
    In the situation of \cref{lem:errorest_optimalval_not_optimal} we have
    \begin{equation}
        \label{eq:errorest_value_not_optimal}
        |J_T(\bu ;\tint,\yint)- J^r_T(\bu;\tint,\yint)|\leq \Delta_{J_T}(\tint,\tildeyint,\bu,\Delta_{\yint})
    \end{equation}
    with 
    \begin{align*}
       \Delta_{J_T}(\tint, \tildeyint, \bu, \Delta_{\yint} )&\coloneqq \tfrac{1}{2}\sqrt{\norm{\Res_p(p^r(\bu), y^r(\bu))}{L^2(V')}^2}\Delta_{y}(\bu)\\
        &\quad+\tfrac{1}{2}\sqrt{\norm{\Res_y(y^r(\bu),\bu)}{L^2(V')}^2+\norm{y^r(\tint;\bu)-\tildeyint}{H}^2}\Delta_{p}(\bu)\\
        &\quad+\Delta_{\yint}{\norm{\bar { p}^r(\tint) }{H}}+ \tfrac{1}{4}\Delta_{\yint}\Delta_{p}(\bu)
    \end{align*}
    and 
    \begin{equation*}
        \Delta_y(\bu) \coloneqq \Delta_{y}(0, \Delta_{\yint}, y^r(\bu), \bu),\quad  \Delta_p(\bu)\coloneqq \Delta_{p}(\Delta_{y}(\bu), p^r(y^r(\bu)), y^r(\bu))
    \end{equation*}
    for $\Delta_{y},\Delta_p$ as in \cref{Lemma:ResBasStateError} and \cref{lem:ResAdjest} respectively.
\end{corollary}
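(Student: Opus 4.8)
The plan is to follow the proof of \cref{cor:errorest_optimalval_diff_initguess} almost verbatim, starting from the \emph{exact} error representation \eqref{eq:infCostest} of \cref{lem:errorest_optimalval_not_optimal} and bounding each of its three summands by computable, residual-based quantities. The decisive simplification compared with \cref{cor:errorest_optimalval_diff_initguess} is that the control is held fixed here, so that $e = x(\bu)-x^r(\bu) = (e_y,0,e_p)$ with $e_\bu = 0$. As a result, no term analogous to $A_3$ appears, the optimal-control estimator of \cref{theo:oc_est} is not needed, and the summand $\tfrac12\norm{B}{L^\infty}\bar\Delta_p\bar\Delta_\bu$ present in $\Delta_{V_T}$ is absent from $\Delta_{J_T}$.

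First I would fix the infimum in \eqref{eq:infCostest}. Since $x^r(\bu)$ solves the reduced state and adjoint Galerkin equations, the functionals $L_y'(x^r(\bu);\tildeyint)$ and $L_p'(x^r(\bu);\tildeyint)$ annihilate every reduced test function in $\mY^r_T(\tint)$; hence the bracketed expression is independent of the admissible $y^r,p^r$, and I may simply take $y^r = y^r(\bu)$ and $p^r = p^r(y^r(\bu))$. This turns the first two summands into $\tfrac12 L_y'(x^r(\bu);\tildeyint)(e_y) + \tfrac12 L_p'(x^r(\bu);\tildeyint)(e_p)$, where now $e_y = y(\bu)-y^r(\bu)$ and $e_p = p(y(\bu))-p^r(y^r(\bu))$ are genuine FOM--ROM errors. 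Applying \cref{Lemma:ResBasStateError} with $\Delta_\bu = 0$ gives the computable bound $\norm{e_y}{Z}\le \Delta_y(\bu)$ (with the final-time $Z$-norm), and then \cref{lem:ResAdjest} with data error $\Delta_y = \Delta_y(\bu)$ gives $\norm{e_p}{Z}\le \Delta_p(\bu)$ (with the initial-time $Z$-norm), together with the improved boundary estimate $\norm{e_p(\tint)}{H}^2\le\tfrac12\Delta_p^2(\bu)$ from \eqref{eq:adjstate_apost_Ht}.

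Next I would bound the two functionals by their dual norms, exactly as in \cref{cor:errorest_optimalval_diff_initguess}: the functional $L_p'(x^r(\bu);\tildeyint)$ extends continuously to $Z = L^2(\tint,\tint+T;V)\cap C([\tint,\tint+T];H)$, its representer being the state residual $\Res_y(y^r(\bu),\bu)$ together with the initial mismatch $y^r(\tint;\bu)-\tildeyint$, so that $\norm{L_p'(x^r(\bu);\tildeyint)}{Z'}\le \big(\norm{\Res_y(y^r(\bu),\bu)}{L^2(V')}^2 + \norm{y^r(\tint;\bu)-\tildeyint}{H}^2\big)^{1/2}$; pairing this with $\norm{e_p}{Z}\le\Delta_p(\bu)$ produces the second line of $\Delta_{J_T}$. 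The symmetric argument for $L_y'(x^r(\bu);\tildeyint)$, whose representer is the adjoint residual $\Res_p(p^r(\bu),y^r(\bu))$ and which is controlled in the final-time $Z$-norm paired with $\Delta_y(\bu)$, yields the first line.

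Finally, the two inner products $\langle\tildeyint-\yint, p^r(y^r(\bu))(\tint)\rangle_H$ and $\tfrac12\langle\yint-\tildeyint, e_p(\tint)\rangle_H$ are estimated by Cauchy--Schwarz and \eqref{eq:perturb_init}, the first giving $\Delta_{\yint}\norm{p^r(y^r(\bu))(\tint)}{H}$ and the second, via the improved estimate \eqref{eq:adjstate_apost_Ht} for $\norm{e_p(\tint)}{H}$ as in \cref{cor:errorest_optimalval_diff_initguess}, giving $\tfrac14\Delta_{\yint}\Delta_p(\bu)$. Collecting the four contributions and applying the triangle inequality to \eqref{eq:infCostest} reproduces $\Delta_{J_T}$ exactly. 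I expect no genuine obstacle: the whole argument is a specialization of \cref{cor:errorest_optimalval_diff_initguess} to a fixed control, and the only point requiring care is the identification of the representers of $L_y'$ and $L_p'$ in $Z'$, which has already been carried out there.
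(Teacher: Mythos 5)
Your proposal is correct and follows essentially the same route as the paper: the paper's proof consists precisely of choosing $y^r=y^r(\bu)$, $p^r=p^r(y^r(\bu))$ in \eqref{eq:infCostest} and bounding the remaining terms as in \cref{cor:errorest_optimalval_diff_initguess}, which is exactly what you spell out (dual-norm bounds on $L_y'$, $L_p'$ via the residual representers, the estimators $\Delta_y(\bu)$, $\Delta_p(\bu)$ from \cref{Lemma:ResBasStateError,lem:ResAdjest}, and Cauchy--Schwarz with \eqref{eq:perturb_init} and \eqref{eq:adjstate_apost_Ht} for the initial-value terms). Your added remark that the infimum is independent of the reduced test functions, and that $e_\bu=0$ removes any $A_3$-type contribution, is correct and consistent with the paper's proof of \cref{lem:errorest_optimalval_not_optimal}.
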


\begin{proof}
   Choosing $y^r= y^r(\bu)$, $p^r= p^r(y^r(\bu))$ in \eqref{eq:infCostest}, and upper bounding the remaining terms similarly as in \cref{cor:errorest_optimalval_diff_initguess} leads to the result.
\end{proof}
\begin{remark}

{ For the evaluation of the value function a posteriori error estimator from
\cref{cor:errorest_optimalval_diff_initguess}, one needs to compute the reduced optimal
triple \(\bar x^r = (\bar y^r, \bar{\bu}^r, \bar p^r)\) by solving \eqref{eq:ROML}, together
with the \(L^2(V')\)-norms of the residuals \(\calR_p(\bar p^r, \bar y^r)\) and
\(\calR_y(\bar y^r, \bar{\bu}^r)\).
For an online-efficient evaluation, an offline--online decomposition can be employed
(cf.~\cite{hesthaven2016certified}; see also the discussion in
\cref{subsec:NUMEXP_setup}). Similarly, the cost functional error estimator from \cref{cor:costbound} requires the
evaluation of the reduced state \(y^r(\bu)\) and the corresponding adjoint
\(p^r(y^r(\bu))\), together with the associated residual norms.}
{ Note that, if \(\Delta_{\yint}=0\), both the value function and cost functional
estimators scale quadratically with respect to the residuals. Indeed, for instance, in
\eqref{eq:valueest_line1}, the state residual \(\calR_y\) is multiplied by the error
estimator for the optimal adjoint state \(\bar{\Delta}_p\), which depends linearly on the
adjoint residual (see \eqref{eq:adjstate_apost_CHL2V_struc}).}
\end{remark}
\subsection{Properties of the ROM}\label{subsec:ROMproperties}

In this section, we state interpolation properties of the ROM and its error estimators and provide convergence results.
\begin{lemma}[Properties of the ROM]\label{lem:interpolation_properties}
    Let \cref{ass:pde,ass:running_cost} be valid and assume $\Delta_{\yint}=0$ in \eqref{eq:perturb_init}, i.e., $\yint=\tildeyint\in H$. It holds
    \begin{enumerate}
        \item\label{lem:interpolation_properties_1} Interpolation property: $y(\bu),\ p(y(\bu))\in \mY_T^r(\tint) \Rightarrow y(\bu)=y^r(\bu)\text{ and }p(y(\bu))=p^r(y^r(\bu))$;
        \item\label{lem:interpolation_properties_2} Interpolation of the optimality system 1: $\bar y,\bar p \in \mY_T^r(\tint) \Rightarrow \bar x=\bar x^r$;
        \item\label{lem:interpolation_properties_3} Interpolation of the optimality system 2: $ y(\bar \bu^r), p(y(\bar \bu^r)) \in \mY_T^r(\tint) \Rightarrow \bar x=\bar x^r$;
        \item\label{lem:interpolation_properties_4} $\bar x=\bar x^r\Rightarrow \Delta_{V_T}=0$;
        %
        %
    \end{enumerate}
\end{lemma}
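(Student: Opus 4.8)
The plan is to treat part~\ref{lem:interpolation_properties_1} as the engine and to deduce the remaining three parts from it, together with the uniqueness of the full and reduced optimality systems recorded in \cref{rem:optcond} and the subsequent remark.

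For part~\ref{lem:interpolation_properties_1} I would first observe that the hypothesis $y(\bu)\in\mY_T^r(\tint)$ forces the initial datum into the reduced space: since $V_r$ is finite-dimensional (hence closed in $H$) the continuous representative of $y(\bu)$ takes values in $V_r$, so evaluating at $\tint$ gives $\yint=y(\bu)(\tint)\in V_r$ and therefore $\Pi^H_{V_r}\tildeyint=\Pi^H_{V_r}\yint=\yint$, reconciling the two initial conditions. Next, because $V_r\subset V$, I would test the FOM state equation \eqref{eq:LTV} only against $v\in V_r$; as $y(\bu)$ already takes values in $V_r$, the resulting identity is exactly the weak form of \eqref{eq:reducedLTV} with the same right-hand side $B\bu$ and the matching initial value, and uniqueness of the reduced solution yields $y(\bu)=y^r(\bu)$. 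The adjoint statement follows identically: with $\tilde y=y^r(\bu)=y(\bu)$, the hypothesis $p(y(\bu))\in\mY_T^r(\tint)$ lets me restrict \eqref{eq:FOM_opsys} to test functions in $V_r$, recover \eqref{eq:ROM_opsys}, and conclude $p(y(\bu))=p^r(y^r(\bu))$ by uniqueness.

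Parts~\ref{lem:interpolation_properties_2} and~\ref{lem:interpolation_properties_3} then reduce to identifying the two variational inequalities \eqref{eq:FOM_opcond} and \eqref{eq:ROM_opcond}. For part~\ref{lem:interpolation_properties_2}, applying part~\ref{lem:interpolation_properties_1} with $\bu=\bar\bu$ gives $\bar y=y^r(\bar\bu)$ and $\bar p=p^r(y^r(\bar\bu))$; substituting this adjoint into \eqref{eq:FOM_opcond} shows that $\bar\bu$ satisfies the reduced optimality inequality \eqref{eq:ROM_opcond}, whence $\bar\bu=\bar\bu^r$ by uniqueness of the reduced minimizer, and then $\bar y=\bar y^r$, $\bar p=\bar p^r$. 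Part~\ref{lem:interpolation_properties_3} is the mirror image: applying part~\ref{lem:interpolation_properties_1} with $\bu=\bar\bu^r$ yields $y(\bar\bu^r)=\bar y^r$ and $p(y(\bar\bu^r))=\bar p^r$, so the reduced condition \eqref{eq:ROM_opcond} for $\bar\bu^r$ becomes the full condition \eqref{eq:FOM_opcond}, forcing $\bar\bu^r=\bar\bu$ by uniqueness of the FOM minimizer and hence $\bar x=\bar x^r$. The only point that requires care here is to verify that after interpolation the adjoint appearing in one inequality literally coincides with the adjoint in the other, so that the nonsmooth terms $g_T$ cancel and the two inequalities are genuinely the same problem; uniqueness then does the rest.

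Finally, part~\ref{lem:interpolation_properties_4} is a direct substitution into the definition \eqref{eq:valueest_line1} of $\Delta_{V_T}$. The assumption $\Delta_{\yint}=0$ removes the two terms carrying $\Delta_{\yint}$, and $\bar x=\bar x^r$ with $\yint=\tildeyint\in V_r$ makes both residuals vanish: $\bar y^r=\bar y$ solves \eqref{eq:LTV} exactly, so $\Res_y(\bar y^r,\bar\bu^r)=0$, and $\bar p^r=\bar p$ solves \eqref{eq:FOM_opsys} exactly, so $\Res_p(\bar y^r,\bar p^r)=0$, while $\norm{\bar y^r(\tint)-\tildeyint}{H}=\norm{\yint-\tildeyint}{H}=0$. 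Tracing these zeros through \cref{Lemma:ResBasStateError,lem:ResAdjest,theo:oc_est} gives $\bar\Delta_\bu=\bar\Delta_y=\bar\Delta_{y,H}=\bar\Delta_p=0$, so every summand of $\Delta_{V_T}$ vanishes. I expect no real difficulty in parts~\ref{lem:interpolation_properties_1} and~\ref{lem:interpolation_properties_4}; the main (mild) obstacle is the bookkeeping in parts~\ref{lem:interpolation_properties_2}--\ref{lem:interpolation_properties_3} just described.
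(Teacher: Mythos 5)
Your proposal is correct and follows essentially the same route as the paper's proof: part~(1) by noting the interpolated FOM solutions satisfy the Galerkin (ROM) equations and invoking uniqueness, parts~(2)--(3) by using part~(1) to transfer the optimality conditions \eqref{eq:FOM_opcond}/\eqref{eq:ROM_opcond} between FOM and ROM and concluding via uniqueness of the minimizers, and part~(4) by tracing zeros through the estimators. The only (harmless) deviation is in part~(4), where the paper deduces the vanishing of $\Delta_y$ and $\Delta_p$ from $\bar x=\bar x^r$ via the error--residual equivalence of \cref{lemma:app_error_equivalence}, while you observe directly that the residuals $\Res_y(\bar y^r,\bar\bu^r)$ and $\Res_p(\bar y^r,\bar p^r)$ vanish because $\bar y^r=\bar y$ and $\bar p^r=\bar p$ solve the full-order equations exactly --- a slightly more elementary argument reaching the same conclusion.
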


\begin{proof}
    \begin{enumerate}
        \item Given that \( y(\bu) \) and \( p(y(\bu)) \) belong to \( \mY_T^r(\tint) \), they solve the ROM state and adjoint equations, respectively. Hence, the assertion follows from the uniqueness of these solutions.
        \item Due $\bar y,\bar p \in \mY_T^r(\tint)$ and \cref{lem:interpolation_properties_1}, we have $\bar y = y^r(\bar \bu)$ and therefore $\bar p = p^r(y^r(\bar \bu))$. Hence, $\bar \bu$ fulfills the optimality condition of the ROM \eqref{eq:ROM_opcond} and it follows $\bar \bu = \bar \bu^r$ by uniqueness. 
        \item Due to $ y(\bar \bu^r), p(y(\bar \bu^r)) \in \mY_T^r(\tint)$ and \cref{lem:interpolation_properties_1}, we have $y(\bar \bu^r) = y^r(\bar \bu^r)$ and therefore $p(y(\bar \bu^r)) = p^r(y^r(\bar \bu^r))$. Hence, $\bar \bu^r$ fulfills the optimality condition of the FOM \eqref{eq:FOM_opcond} and it follows $\bar \bu = \bar \bu^r$ by uniqueness. 
        \item Due to $\bar x = \bar x^r$ and \cref{lemma:app_error_equivalence}, the two error estimators $\Delta_y(0,0,\bar \bu^r, \bar y^r)$ and $\Delta_p(0,\bar \bu^r,\bar y^r)$ vanish. According to \cref{theo:oc_est} the control estimator $\bar \Delta_u$ vanishes and hence $\Delta_{V_T}=0$ by definition in \eqref{eq:valueest_line1}.
        %
        %
    \end{enumerate}
\end{proof}
In the following, we show the convergence of $\bar x^r$ to $\bar x$. The proof and some preliminary lemmas are deferred to \cref{sec:APP_add_estimates}.
\begin{theorem}\label{theo:os_conv}
    Let $\bar x=(\bar y, \bar \bu, \bar p)\in \mX$ and $\bar x^r=(\bar y^r, \bar \bu^r, \bar p^r)\in \mX^r$ be the solution of \eqref{eq:FOML} and \eqref{eq:ROML}, respectively, for $\tildeyint=\yint\in H$ and $V_r\coloneqq \linspan\{v_1,\ldots,v_r\}$ and an orthonormal basis $(v_n)_{n\in \N}$ of $V$. Then, $\bar x^r\to \bar x$ in $\mX(\tint)$ as $r\to \infty$.
\end{theorem}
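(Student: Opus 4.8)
The goal is to show that as the reduced space $V_r = \linspan\{v_1,\ldots,v_r\}$ is enriched along an orthonormal basis $(v_n)_{n\in\N}$ of $V$, the reduced optimal triple $\bar x^r$ converges to the FOM optimal triple $\bar x$ in $\mX(\tint) = \mY_T(\tint)\times\mU_T(\tint)\times\mY_T(\tint)$. The natural strategy is to exploit the \emph{a posteriori} error estimates already established, together with a density argument. Since $V_r$ increases to $V$ (the span of the full orthonormal basis is dense in $V$), the key mechanism is that the approximability of the \emph{fixed} FOM optimal state $\bar y$ and adjoint $\bar p$ by reduced-space functions improves as $r\to\infty$.

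\textbf{Step 1: reduce convergence to approximability of $\bar y$ and $\bar p$.} I would first invoke \cref{theo:oc_est}: the control error $\norm{\bar\bu-\bar\bu^r}{\mU_T(\tint)}^2$ is bounded by $\bar\Delta_\bu^2$, which in turn (with $\Delta_{\yint}=0$ here) is a combination of the state estimator $\Delta_y(0,0,\bar y^r,\bar\bu^r)$ and the adjoint estimator $\Delta_p(0,\bar y^r,\bar p^r)$. By the error--residual equivalence referenced in \cref{lem:interpolation_properties} (item \ref{lem:interpolation_properties_4}), these residual-based estimators are equivalent to the true errors $\norm{\bar y - y^r(\bar\bu^r)}{}$ and $\norm{\bar p - p^r(\bar y^r)}{}$ up to constants independent of $r$. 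Thus it suffices to show that the true state, adjoint, and control errors all vanish as $r\to\infty$, and the whole problem collapses onto controlling these three quantities simultaneously.

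\textbf{Step 2: the fixed-control best-approximation bound.} The crucial observation is that \cref{Lemma:ResBasStateError} and \cref{lem:ResAdjest} compare the FOM and ROM solutions \emph{for the same control}. Because the residual $\Res_y(y^r,\bu^r)$ measures how far the FOM dynamics are from being satisfied by the reduced solution, and because the reduced state $y^r(\bu)$ is the Galerkin projection of the FOM dynamics onto $V_r$, standard Galerkin theory gives $\norm{y(\bu)-y^r(\bu)}{\mY_T(\tint)} \to 0$ as $r\to\infty$ uniformly over controls in bounded sets, the rate being governed by $\dist_V(\,\cdot\,,V_r)$ applied to $y(\bu)$; the same holds for the adjoint. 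The point is that $\bigcup_r V_r$ is dense in $V$, so for the \emph{fixed} $\bar y,\bar p\in\mY_T(\tint)\hookrightarrow L^2(\tint,\tint+T;V)$ one has $\dist_{L^2(V)}(\bar y,\mY_T^r(\tint))\to 0$ and likewise for $\bar p$, by projecting pointwise in time and applying dominated convergence.

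\textbf{Step 3: closing the loop via uniform boundedness and continuous dependence.} The difficulty is that $\bar\bu^r$, $\bar y^r$, and $\bar p^r$ all move with $r$, so the estimators in Step 1 involve the reduced quantities rather than the fixed FOM ones; a naive application would be circular. To break this, I would first establish a uniform a priori bound: since $g\ge 0$ and $J_T^r(\bar\bu^r)\le J_T^r(0) \le \gamma(T)\norm{\yint}{H}^2$ uniformly in $r$ (using $\Pi^H_{V_r}$ is a contraction on $H$), the optimal reduced controls $\bar\bu^r$ are bounded in $\mU_T(\tint)$ independently of $r$, and hence so are $\bar y^r, \bar p^r$ by \cref{ass:2} and the adjoint estimate. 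Then I would extract a weakly convergent subsequence $\bar\bu^r\rightharpoonup \bu^*$, pass to the limit in the reduced state and adjoint equations (using that any fixed test function lies in $V_r$ for $r$ large and density), and show the limit triple satisfies the FOM optimality system \eqref{eq:FOML}; by uniqueness of the FOM optimizer the limit is $\bar x$, and weak convergence upgrades to strong convergence through the coercive energy estimates. A subsequence argument then yields convergence of the full sequence.

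\textbf{Main obstacle.} The principal difficulty is the circularity noted above: the error estimators bound $\bar x-\bar x^r$ in terms of residuals evaluated at the moving reduced solution, so one cannot directly conclude convergence without first pinning down the limiting behavior of $\bar\bu^r$. I expect the decisive step to be the passage to the limit in the variational inequality \eqref{eq:ROML3}, where the non-smooth term $g_T$ must be handled: lower semicontinuity of $g_T$ under weak convergence of controls is needed to ensure the limit satisfies the FOM variational inequality \eqref{eq:FOML3}, and strong convergence of the control (rather than merely weak) must then be recovered from the quadratic coercivity in $\lambda$ together with strong convergence of the adjoint states $\bar p^r\to\bar p$ in $L^2(\tint,\tint+T;V)$.
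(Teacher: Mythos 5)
Your overall strategy would yield the theorem, but it is a genuinely different -- and considerably heavier -- route than the paper's. The paper proves strong convergence of the controls \emph{directly}, with no compactness or weak-limit extraction: it tests the FOM variational inequality \eqref{eq:FOML3} at $\bu=\bar\bu^r$ and the ROM inequality \eqref{eq:ROML3} at $\bu=\bar\bu$ and adds them, so that the non-smooth terms $g_T$ cancel identically and one is left with
\begin{equation*}
    \lambda \norm{\bar\bu^r-\bar\bu}{\mU_T(\tint)}^2 \leq \langle B'(\bar p^r-\bar p),\bar\bu-\bar\bu^r\rangle_{\mU_T(\tint)}.
\end{equation*}
The circularity you identify (all reduced quantities move with $r$) is then broken not by compactness but by inserting the \emph{mixed} quantities $p^r(\bar y)$ and $y^r(\bar\bu)$, i.e.\ ROM solutions evaluated at the fixed FOM optimal data: after integrating by parts against the ROM state and adjoint equations and applying Young's inequality, the right-hand side is absorbed, leaving only $\norm{\bar y-y^r(\bar\bu)}{L^2(H)}^2$ and $\norm{p^r(\bar y)-\bar p}{\mY_T(\tint)}^2$, both of which vanish by the fixed-data Galerkin convergence result (\cref{theo:APP_convergence}). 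Convergence of $\bar y^r$ and $\bar p^r$ then follows from exactly the decomposition you sketch (reduced-solution-at-FOM-control plus Galerkin error) together with continuous dependence. What the paper's route buys: full-sequence convergence for free, no subsequence extraction, no Aubin--Lions compactness, and no lower-semicontinuity argument for $g_T$. What your route buys: it is the classical scheme and would extend to situations where the two variational inequalities cannot be cross-tested so cleanly.

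Three soft spots in your sketch deserve mention. First, your Step 1 is vacuous: reducing ``convergence in $\mX_T(\tint)$'' to ``the true state, adjoint and control errors vanish'' is a tautology, and you rightly abandon it in Step 3; the a posteriori estimators play no role in either proof. Second, you treat the fixed-control convergence $y^r(\bu)\to y(\bu)$ in $\mY_T(\tint)$ as ``standard Galerkin theory,'' but under the paper's weak regularity ($y,p\in\mY_T(\tint)$ only, rather than $H^1(\tint,\tint+T;V)$) this is precisely the non-trivial content of \cref{theo:APP_convergence}: pointwise-in-time projection plus dominated convergence gives only the $L^2(V)$ part, while the $L^2(V')$-convergence of the time derivatives requires the norm identities \eqref{eq:hh0}--\eqref{eq:hh2} specific to the $V$-orthonormal basis. (Also, your uniform bound on $\bar\bu^r$ should come from an energy estimate for $J^r_T(0)$, not from $\gamma(T)$: \cref{ass:1} concerns the FOM value function and is not a hypothesis of the theorem.) Third, your final upgrade from weak to strong control convergence is circular as stated: you invoke strong $L^2(\tint,\tint+T;V)$ convergence of $\bar p^r$, which is itself not yet available at that stage and must be chained through Aubin--Lions compactness of the states, continuous dependence of the reduced adjoint on its data, and again the fixed-data convergence. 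These gaps are fillable, but they are where the actual work of the proof lies.
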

%
\section{Suboptimality of the reduced RHC algorithm}\label{sec:stabROMRHC}
%
Replacing the FOM finite-horizon problem \eqref{eqn:fin_hor_ocp} with its reduced counterpart \eqref{eqn:reducedStabilizationProblem} to determine a control for the full-order system \eqref{eq:LTV} may compromise the exponential stability of the resulting closed-loop system. In this section, we investigate conditions, formulated in terms of the error estimates, under which the reduced optimal control stabilizes the full-order system with a desired performance level \( \tilde{\alpha} \in (0,1) \). A central component of our analysis is the following formulation of the RDP.
\begin{theorem}[Relaxed Dynamic Programming]\label{theorem:RelaxedDDP}
    Let \cref{ass:pde,ass:running_cost} hold and let $T\geq \delta >0$. Let $\bu\in \mU_\infty(0)$ be a control with trajectory $y=y(\bu,0,y_0)$ starting from $y_0\in H$. If there exists $\tilde\alpha \in (0,1)$ such that
    \begin{align}\label{eq:RelaxedDDP_dissipativity}
        \tilde\alpha J_\delta(\bu;t_k,y(t_k))&\leq  V_T(t_k,y(t_k)) - V_T(t_{k+1},y(t_{k+1}))  \quad \text{ for } k \in \N_0,
    \end{align}
    then $y\in L^2(0,\infty;V)$ and the following \emph{suboptimality inequality} holds
    \begin{equation}
        \label{eq:RelaxedDDP_suboptimaity_inequality}
        V_\infty(y_0)\leq J_\infty(\bu;0,y_0)\leq \tfrac{1}{\tilde\alpha}V_T(0,y_0)\leq \tfrac{1}{\tilde\alpha}V_\infty(0,y_0).
    \end{equation}
    If additionally \cref{ass:1} holds, we have
    \begin{align}\label{eq:relDD_help1}
        V_T(t_{k+1},y(t_{k+1})) \leq e^{-\tilde\zeta t_k}V_T(0,y_0),\\
         \label{eq:RelaxedDDP_exponential_stability}
        {\norm{y(t;\bu,y_0)}{H}^2} \leq \tilde C_{rh} e^{-\tilde\zeta t}\norm{y_0}{H}^2 \quad  \text{ for }t\geq 0,
    \end{align}
    where the positive numbers $(\tilde\zeta,\tilde C_{rh})$ depend on $(\tilde\alpha,\delta,T)$. 
\end{theorem}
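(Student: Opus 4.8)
The plan is to mirror the structure of the FOM proof of \cref{theorem:stabilityRHC}, but now starting from the \emph{hypothesis} \eqref{eq:RelaxedDDP_dissipativity} rather than deriving it from \cref{pro1}. First I would establish the suboptimality inequality \eqref{eq:RelaxedDDP_suboptimaity_inequality}. Summing \eqref{eq:RelaxedDDP_dissipativity} over $k=0,\ldots,k'$ telescopes the right-hand side to $V_T(0,y_0)-V_T(t_{k'+1},y(t_{k'+1}))\leq V_T(0,y_0)$, while the left-hand side accumulates to $\tilde\alpha\sum_{k=0}^{k'}J_\delta(\bu;t_k,y(t_k))=\tilde\alpha J_{t_{k'+1}}(\bu;0,y_0)$, using additivity of the cost integral over the consecutive sampling intervals $[t_k,t_{k+1})$ together with the dynamic-programming/flow property of the state $y=y(\bu,0,y_0)$. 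Nonnegativity of $V_T$ (from \eqref{eq:ell_property}) gives $\tilde\alpha J_{t_{k'+1}}(\bu;0,y_0)\leq V_T(0,y_0)$; letting $k'\to\infty$ and using monotone convergence yields $\tilde\alpha J_\infty(\bu;0,y_0)\leq V_T(0,y_0)$, hence $J_\infty(\bu;0,y_0)\leq\tfrac{1}{\tilde\alpha}V_T(0,y_0)$. The remaining inequalities $V_\infty(y_0)\leq J_\infty(\bu;0,y_0)$ and $V_T(0,y_0)\leq V_\infty(0,y_0)$ are immediate from the definitions of the value functions (any feasible control is suboptimal; a shorter horizon has a smaller optimal cost since $\ell\geq 0$). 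Finiteness of $J_\infty(\bu;0,y_0)$ forces $\norm{y}{L^2(0,\infty;H)}<\infty$, and then \eqref{e8} from \cref{ass:2}, applied on each interval and summed, upgrades this to $y\in L^2(0,\infty;V)$.

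Next I would prove the exponential decay \eqref{eq:relDD_help1}--\eqref{eq:RelaxedDDP_exponential_stability} under the additional \cref{ass:1}. The aim is to convert the hypothesis \eqref{eq:RelaxedDDP_dissipativity} into a geometric contraction of $V_T$ along the sampling grid, exactly as \eqref{e20s} was converted into \eqref{e20e} in \cref{pro1}. The key auxiliary bound I need is a reverse estimate of the form
\begin{equation*}
    V_T(t_{k+1},y(t_{k+1}))\leq (\theta_1+\theta_2)\,J_\delta(\bu;t_k,y(t_k)),
\end{equation*}
analogous to \eqref{e44}; combining it with \eqref{eq:RelaxedDDP_dissipativity} gives
\begin{equation*}
    V_T(t_{k+1},y(t_{k+1}))-V_T(t_k,y(t_k))\leq-\tfrac{\tilde\alpha}{\theta_1+\theta_2}V_T(t_{k+1},y(t_{k+1})),
\end{equation*}
whence $V_T(t_{k+1},y(t_{k+1}))\leq\tilde\eta\,V_T(t_k,y(t_k))$ with $\tilde\eta=(1+\tfrac{\tilde\alpha}{\theta_1+\theta_2})^{-1}\in(0,1)$. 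Iterating yields \eqref{eq:relDD_help1} with $\tilde\zeta=\lvert\ln\tilde\eta\rvert/\delta$.

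The main obstacle is that \eqref{e44} was derived in \cref{lem3}/\cref{pro1} for the \emph{optimal} finite-horizon trajectory $\bar y(\cdot;t_k,y(t_k))$ via \eqref{e23}--\eqref{e24}, whereas here $\bu$ is a given (possibly non-optimal) control and $y$ is its trajectory. I therefore cannot apply \eqref{e44} verbatim. The cleanest route is to note that the needed reverse bound on $V_T(t_{k+1},y(t_{k+1}))$ depends only on the state value $y(t_{k+1})$ at the sampling instant, not on how it was reached: apply \cref{lem3} with the new initial pair $(\tint,\yint)=(t_k,y(t_k))$ and use that the left-hand side $V_T(t_k+\delta,\bar y(t_k+\delta;t_k,y(t_k)))$ equals $V_T(t_{k+1},y(t_{k+1}))$ precisely when $\bu$ coincides with the optimal control on $[t_k,t_{k+1})$. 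Since this coincidence is not assumed, I would instead bound $V_T(t_{k+1},y(t_{k+1}))$ directly from \cref{ass:1} and \eqref{e8a}: by \eqref{e7} and \eqref{e8a},
\begin{equation*}
    V_T(t_{k+1},y(t_{k+1}))\leq\gamma(T)\norm{y(t_{k+1})}{H}^2\leq\gamma(T)C_2(\delta)\,J_\delta(\bu;t_k,y(t_k)),
\end{equation*}
which supplies a reverse bound of the required type with $\theta_1+\theta_2$ replaced by $\gamma(T)C_2(\delta)$. This sidesteps the optimality issue entirely and still delivers the contraction. Finally, \eqref{eq:RelaxedDDP_exponential_stability} follows from \eqref{eq:relDD_help1} by the same continuous-in-time interpolation argument as in \eqref{eq:exp_cont_in_t}: for $t\in[t_k,t_{k+1}]$, estimate $\norm{y(t)}{H}^2$ via \eqref{e8} on $[t_k,t]$ in terms of $\norm{y(t_k)}{H}^2$ and $V_T(t_k,y(t_k))$, bound the latter by \eqref{e7}, insert \eqref{eq:relDD_help1}, and absorb the grid offset $e^{-\tilde\zeta(t_{k+1}-t)}\leq\tilde\eta^{-1}$ into the constant $\tilde C_{rh}$.
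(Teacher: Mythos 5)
Your proposal is correct and follows essentially the same route as the paper: the suboptimality inequality is obtained by the same telescoping argument, and your ``sidestep'' --- bounding $V_T(t_{k+1},y(t_{k+1}))\leq\gamma(T)C_2(\delta)\,J_\delta(\bu;t_k,y(t_k))$ directly via \eqref{e7} and \eqref{e8a} rather than through the optimal-trajectory estimate \eqref{e44} --- is precisely the paper's chain of inequalities, yielding the same contraction factor $\tilde\eta=(1+\nicefrac{\tilde\alpha}{C_2(\delta)\gamma(T)})^{-1}$ and the same interpolation argument for intermediate times $t\in[t_k,t_{k+1}]$. Your explicit verification that the hypothesis forces $y\in L^2(0,\infty;V)$ (summing \eqref{e8} over sampling intervals, with $\sum_k\norm{y(t_k)}{H}^2<\infty$ from \eqref{e8a}) is a detail the paper's proof leaves implicit, and is a welcome addition.
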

\begin{proof}
    Inequality \eqref{eq:RelaxedDDP_suboptimaity_inequality} follows directly from \eqref{eq:RelaxedDDP_dissipativity} with the same argument as in the proof of \cref{theorem:stabilityRHC}. Next, we turn to \eqref{eq:relDD_help1}, focusing first on the discrete time steps $t_k$. From \cref{ass:1} and \cref{ass:2}, we obtain
    \begin{align*}
         V_T(t_k,y(t_k)) - V_T(t_{k+1},y(t_{k+1})) & \stackrel{\eqref{eq:RelaxedDDP_dissipativity}}{\geq} \tilde\alpha J_\delta(\bu;t_k,y(t_k))\stackrel{\eqref{e8a}}{\geq} \tfrac{\tilde\alpha}{C_2(\delta)}\norm{y(t_{k+1})}{H}^2\\
         &\stackrel{\eqref{e7}}{\geq}\tfrac{\tilde\alpha}{C_2(\delta)\gamma(T))}V_T(t_{k+1},y(t_{k+1})).
    \end{align*}
    Hence, with $\tilde \eta=(1+\nicefrac{\tilde\alpha}{C_2(\delta)\gamma(T)})^{-1}\in (0,1)$ it holds $V_T(t_{k+1},y(t_{k+1})) \leq\tilde \eta V_T(t_k,y(t_k))$. Setting $\tilde\zeta=\nicefrac{|\ln(\tilde\eta)|}{\delta}$, this inductively implies \eqref{eq:relDD_help1}. Turning to \eqref{eq:RelaxedDDP_exponential_stability}, we have
    \begin{equation}
    \label{e132}
    \begin{split}
    \norm{y(t_{k+1})}{H}^2 &\stackrel{\text{\eqref{e8a}}}{\leq} C_2(\delta) J_\delta(\bu,t_k,y(t_k)) \stackrel{\text{\eqref{eq:RelaxedDDP_dissipativity}}}{\leq} \tfrac{C_2(\delta)}{\tilde \alpha} V_T( t_{k} , y(t_{k}))\\
    & \stackrel{\text{\eqref{eq:relDD_help1}}}{\leq} \tfrac{C_2(\delta)}{\tilde \alpha}  e^{-\tilde\zeta t_{k}} V_T(0, y_0) \stackrel{\text{\eqref{e7}}}{\leq} \tfrac{C_2(\delta)\gamma(T)}{\tilde \alpha}  e^{-\tilde\zeta t_{k}} \norm{y_0}{H}^2.
    \end{split}
    \end{equation}
    Setting $\tilde C_H=\tfrac{ C_2(\delta) \gamma(T)}{\tilde\alpha\tilde\eta}$, we get $\norm{y(t_{k+1})}{H}^2 \leq \tilde C_H e^{-\tilde\zeta t_{k+1}} \norm{y_0}{H}^2$ for $k \in \N_0$.
    We obtain then \eqref{eq:RelaxedDDP_exponential_stability} with $\tilde C_{rh} \coloneqq\bar{C}\tilde C_H(1+\gamma(T))\tilde\eta^{-1}$, where we use the same arguments as in \eqref{eq:exp_cont_in_t} replacing all the constants there with their corresponding ones here.
\end{proof} 
\begin{definition}\label{def:current_perfind}
    Given a control $\bu \in \mU_\infty(0)$, current initial value $\yint\in H$ at $\tint \geq 0$, we define the induced perfomance index as
    \begin{equation}\label{eq:current_pfind}
        \alpha(\bu,\tint,\yint)\coloneqq \frac{ V_T(\tint, \yint) -  V_T(\tint+\delta, y(\tint+\delta;\bu,\tint,\yint)}{J_\delta(\bu;\tint,\yint) }.
    \end{equation}
    The above definition is equally applicable for $\bu \in \mU_\delta(\tint)$.
\end{definition}
We adaptively construct the reduced space $V_r$ and the reduced RHC $\bu_{rh}^r\in \mU_\infty(0)$ to satisfy condition \eqref{eq:RelaxedDDP_dissipativity} from \cref{theorem:RelaxedDDP}, and denote the corresponding trajectory by $\tilde y_{rh}=y(\bu_{rh}^r,0,y_0)$. Let $\tilde y_k=\tilde y_{rh}(t_k)\in H$ for $k\in \N_0$ and consider the reduced optimal control $\bar \bu_k^r \coloneqq \bar \bu^r(t_k,\tilde y_k)\in \mU_T(t_k)$ being the solution to \eqref{eqn:reducedStabilizationProblem} for $(\tint, \tildeyint)=(t_k, \tilde y_k)$. With \cref{def:current_perfind}, condition \eqref{eq:RelaxedDDP_dissipativity} reads as
\begin{equation}
    \label{eq:ROMsuboptimality}
     \alpha_k \coloneqq \alpha(\bar\bu^r_k,t_k,\tilde y_k)\geq {\tilde \alpha} \quad \text{for all }k\in \N_0.
\end{equation}
Unfortunately, verifying \eqref{eq:ROMsuboptimality} is computationally expensive, as it requires two evaluations of the full-order value function. Therefore, we instead consider inexpensive sufficient and necessary conditions based on the ROM value function $V_T$ and the error estimator $\Delta_{V_T}$ introduced in \cref{cor:errorest_optimalval_diff_initguess}. A sufficient condition for \eqref{eq:ROMsuboptimality} provided in \cite{dietze2023reduced} is
\begin{align}
    \label{eq:ROMsuboptimality_suff}
    \begin{aligned}
        \ubar{\alpha}_k&\coloneqq\tfrac{ V_T^r(t_k, \tilde y_k)-\Delta_{V_T}(t_k,\tilde y_k,0)-V_T^r(t_{k+1},\tilde y_{k+1})-\Delta_{V_T}(t_{k+1},\tilde y_{k+1},0)}{J_\delta(\bar \bu_k^r;t_k,\tilde y_k)}\geq {\tilde \alpha},
    \end{aligned}
\end{align}
while a necessary condition is 
\begin{align}
    \label{eq:ROMsuboptimality_necc}
    \begin{aligned}
        \tilde \alpha \leq \bar{\alpha}_k&\coloneqq\min\Big\{\tfrac{V_T^r(t_k, \tilde y_k)+\Delta_{V_T}(t_k,\tilde y_k,0)-V_T^r(t_{k+1},\tilde y_{k+1})+\Delta_{V_T}(t_{k+1},\tilde y_{k+1},0)}{J_\delta(\bar \bu_k^r;t_k,\tilde y_k))},1\Big\}
    \end{aligned}
\end{align}
since by the error estimator property \eqref{eq:errorest_value_initguess} it holds $\alpha_k\in [\ubar{\alpha}_k, \bar{\alpha}_k]$. 
A disadvantage of the pair $(\ubar{\alpha}_k, \bar{\alpha}_k)$ is that their computation at time step~$k$ still requires computing the full-order system over the interval $[t_k, t_k + \delta]$ -- see, for instance, the dependency on the FOM state $\tilde y_{k+1}$ and the FOM cost $J_\delta$ in the denominator \eqref{eq:ROMsuboptimality_suff}. {To make the certification procedure completely independent from this FOM evaluation}, we introduce a new performance index that enables an online-efficient acceptance procedure at step~$k$, {at the cost of a larger overestimation (see the discussion in
\cref{subsec:numstab})}. Specifically, we replace the FOM state $\tilde y_{k+1} = y(t_{k+1};\bar \bu_k^r, t_k, \tilde y_k)$ and the cost $J_\delta(\bar \bu_k^r; t_k, \tilde y_k)$ with their ROM counterparts $y^r_{k+1} = y^r(t_{k+1}; \bar \bu_k^r, t_k, \tilde y_k)$,  $J^r_\delta(\bar \bu_k^r; t_k, \tilde y_k)$, and incorporate the estimators $\Delta_{\tilde y_{k+1}} \coloneqq \Delta_{y}(t_{k+1}; 0, 0, y^r(\bar \bu_k^r, t_k, \tilde y_k), \bar \bu_k^r),\ \Delta_{J_\delta}(\bar \bu_k^r, t_k, \tilde y_k, 0)$ (see \eqref{eq:state_apost_CHL2V} for $t=t_{k+1}$ in \cref{Lemma:ResBasStateError} and \cref{cor:costbound}). We define
    \begin{align}
        \label{eq:ROMsuboptimality_suff_fullROM}
        \begin{aligned}
            \ubar{\alpha}_k^r&\coloneqq \tfrac{V_T^r(t_k, \tilde y_k)-\Delta_{V_T}(t_k, \tilde y_k,0)-V_T^r(t_{k+1},y^r_{k+1})-\Delta_{V_T}(t_{k+1},y^r_{k+1},\Delta_{ \tilde y_{k+1}})}{J^r_\delta(\bar \bu_k^r;t_k, \tilde y_k)+\Delta_{J_\delta}(t_k,  \tilde y_k, \bar \bu_k^r, 0 )}
        \end{aligned}
    \end{align}
    and
    \begin{align}
        \label{eq:ROMsuboptimality_necc_fullROM}
        \begin{aligned}
            \bar{\alpha}_k^r&\coloneqq\min \Big\{\tfrac{V_T^r(t_k,  \tilde y_k)+\Delta_{V_T}(t_k, \tilde y_k,0)-V_T^r(t_{k+1},y^r_{k+1})+\Delta_{V_T}(t_{k+1},y^r_{k+1},\Delta_{ \tilde y_{k+1}})}{J^r_\delta(\bar \bu_k^r;t_k, \tilde y_k)-\Delta_{J_\delta}(t_k,  \tilde y_k, \bar \bu_k^r, 0 )},1\Big\}.
        \end{aligned}
    \end{align}
    Then $\alpha_k\in [\ubar{\alpha}^r_k, \bar{\alpha}^r_k]$ by \cref{cor:errorest_optimalval_diff_initguess,cor:costbound} and the condition $\ubar{\alpha}^r_k \geq \tilde{\alpha}$ is sufficient to guarantee \eqref{eq:ROMsuboptimality}. Note that for the computation of $(\ubar{\alpha}_k, \bar{\alpha}_k)$, $(\ubar{\alpha}_k^r, \bar{\alpha}_k^r)$, we need to evaluate the reduced value function at step $k$ and $k+1$.
In the following, we consider the pair $(\ubar{\alpha}^r_k, \bar{\alpha}^r_k)$, but analogous argumentation applies also to $(\ubar{\alpha}_k, \bar{\alpha}_k)$.
If $\ubar{\alpha}^r_k \geq \tilde{\alpha}$, we accept $\bu_{rh}^r\big|_{[t_k,t_{k+1})} \coloneqq \bar \bu^r_k$ and apply it to the full system to obtain the next initial value. If $\ubar{\alpha}^r_k\leq  \tilde \alpha$, we reject the control, refine the model, and repeat the step. The refinement step is model-dependent and will be discussed in \cref{subsec:POD} for a POD-Galerkin ROM. A pseudocode of the ROM-RHC is outlined in \cref{algo:ROMRHC}. To prove its stability, we impose the following consistency assumption on the reduced space $V_r$, which will be verified for the POD-Galerkin ROM in \cref{lem:alpha_convergence}.
\begin{assumption}\label{ass:ROMconsistency}
    For each $k\in \N_0$, assume that we can make the ROM arbitrarily accurate with increasing dimension $r\in \N$ of $V_r$ in the sense that
    \begin{equation}\label{eq:ROM_accurate_property}
        \ubar\alpha_k^r \uparrow  \alpha(\bar \bu_k,t_k,\tilde y_k)
        \ (r\to \infty),
    \end{equation}
    where  $\alpha(\bar \bu_k,t_k,\tilde y_k)$ (defined in \cref{def:current_perfind}) is the FOM performance along the trajectory $\tilde y_{rh}$ with $\bar \bu_k \coloneqq \bar \bu (t_k,\tilde y_k)$ being the solution of \eqref{eqn:fin_hor_ocp} for $(\tint,\yint)=(t_k,\tilde y_k)$.
\end{assumption}
%
\begin{algorithm}[htbp!]
\caption{ROM-RHC($\delta,T, \tilde\alpha$)}\label{algo:ROMRHC}
\begin{algorithmic}[1]
\REQUIRE{Final time $T_{\infty} \in \mathbb{R}_{\geq 0} \cup \{ \infty\}$, sampling time $\delta>0$,  prediction horizon $T\geq \delta$, desired suboptimality $\tilde\alpha\in(0,\alpha)$, initial value $y_0\in H$;}
\ENSURE{ROM-RHC~$\mathbf{u}^r_{rh}$, non-decreasing sequence~$\{ t_k \}_{k\in\mathbb{N}}$.}
\STATE Set~$(\tint,\tildeyint)\colonequals(0,y_0)$, $ \tilde y_{rh}(\tint)\coloneqq y_0$, $t_0\coloneqq 0$; $k\coloneqq 0$; $V_r\coloneqq \{0\}$;
\WHILE{$ \tint <T_\infty$}\label{algoline:1}
\STATE \label{step4} Find the solution $(\bar { \bu}^r(\cdot\,;\tint,\yint),\bar { y}^r(\cdot\,;\tint,\yint))$ by solving~\eqref{eqn:reducedStabilizationProblem};
\STATE Compute $(\ubar \alpha_k^r,\bar \alpha_k^r)$ according to \eqref{eq:ROMsuboptimality_suff_fullROM} and \eqref{eq:ROMsuboptimality_necc_fullROM}, respectively;
\IF{$\ubar \alpha_k^r \geq  \tilde \alpha$}
\STATE Set $k\gets k+1$;  $t_k \gets \tint +\delta$;
\STATE For all $\tau\in[\tint,t_k)$, set $\bu^r_{rh}(\tau)\coloneqq \bar { \bu}^r(\tau;\tint,\tildeyint)$, and $\tilde y_{rh}(\tau)\coloneqq y(\tau;\bar{ \bu}^r,\tint, \tildeyint)$;
\STATE Update $(\tint,\tildeyint)\leftarrow(t_k, y(t_k;\bar{ \bu}^r,\tint, \tildeyint))$;
\ELSE
\STATE\label{algoline:2} Discard $\bar { \bu}^r( \cdot;\tint,\tildeyint)$, {refine model $V_r$ to satisfy \cref{ass:ROMconsistency} (cf. \cref{rem:pod_update} for a POD-ROM)};
\ENDIF
\ENDWHILE
\end{algorithmic}
\end{algorithm}
\begin{theorem}[Stability/suboptimality of ROM-RHC]\label{theorem:StabilityReducedRHC}
    Let \cref{ass:pde,ass:running_cost} and \cref{ass:ROMconsistency} hold and $ \alpha\in (0,1)$, $T\geq \bar T>\delta>0$ be chosen as in \cref{theorem:stabilityRHC}. Further, let $y_0\in H$ hold and $\tilde \alpha\in (0,\alpha)$ be a fixed desired performance.
     Then, for all $k\in \N_0$ there {exists a reduced space $V_{r_k}$ with dimension} $r_k\in \N$ with $0<\tilde \alpha \leq \ubar\alpha_k^{r_k}<\alpha$. {For these basis sizes}, the ROM-RHC $\bu^r_{rh}$ from \cref{algo:ROMRHC} satisfies the suboptimality inequality 
    \begin{equation}
        \label{eq:ROM_suboptimaity_inequality}
    V_\infty(0,y_0)\leq J_\infty(\bu^r_{rh};0,y_0)\leq \tfrac{1}{\tilde \alpha}V_T(0,y_0)\leq \tfrac{1}{\tilde \alpha}V_\infty(0,y_0).
    \end{equation}
    If additionally, \cref{ass:1} is satisfied, we have exponential stability of the closed-loop law, that is,
    \begin{equation}
        \label{eq:ROM_exponential_stability}
        \norm{\tilde y_{rh}(t)}{H}^2 \leq \tilde C_{rh} e^{-\tilde \zeta t}\norm{y_0}{H}^2 \quad  \text{ for }t\geq 0,
    \end{equation}
    where the positive numbers $(\tilde \zeta,\tilde C_{rh})$ depend on $(\tilde \alpha,\delta,T)$.
\end{theorem}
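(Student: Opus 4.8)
The plan is to reduce everything to the relaxed dynamic programming principle of \cref{theorem:RelaxedDDP}: as soon as the sampled performance index $\alpha_k$ of the reduced feedback stays above $\tilde\alpha$, the suboptimality bound \eqref{eq:ROM_suboptimaity_inequality} and the exponential decay \eqref{eq:ROM_exponential_stability} are nothing but \eqref{eq:RelaxedDDP_suboptimaity_inequality} and \eqref{eq:RelaxedDDP_exponential_stability} applied to $\bu=\bu^r_{rh}$ along $\tilde y_{rh}$. The argument therefore decomposes into three parts: (i) a finite $r_k$ always makes \cref{algo:ROMRHC} accept step $k$; (ii) acceptance implies the RDP inequality \eqref{eq:RelaxedDDP_dissipativity}; and (iii) the genuinely new claim $\bar r=\sup_k r_k<\infty$.

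For (i) I would first note that the FOM index is bounded below by the FOM parameter. Combining \cref{def:current_perfind} with \eqref{e20s}, and using $\int_{t_k}^{t_k+\delta}\bar\ell(t;t_k,\tilde y_k)\,\dt=J_\delta(\bar\bu_k;t_k,\tilde y_k)$ for the FOM optimal control $\bar\bu_k=\bar\bu(t_k,\tilde y_k)$, gives $\alpha(\bar\bu_k,t_k,\tilde y_k)\geq\alpha>\tilde\alpha$ at every node. \cref{ass:ROMconsistency} then provides $\ubar\alpha_k^r\uparrow\alpha(\bar\bu_k,t_k,\tilde y_k)$, so a smallest accepting dimension $r_k$ with $\ubar\alpha_k^{r_k}\geq\tilde\alpha$ exists; taking this minimal $r_k$ yields the lower bound and records the bracket $\tilde\alpha\leq\ubar\alpha_k^{r_k}<\alpha$, the strict upper bound reflecting that a model certified only at level $\tilde\alpha$ is refined no further than necessary.

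Part (ii) is short: \cref{cor:errorest_optimalval_diff_initguess,cor:costbound} sandwich the true index, $\alpha_k\in[\ubar\alpha_k^{r_k},\bar\alpha_k^{r_k}]$, so $\ubar\alpha_k^{r_k}\geq\tilde\alpha$ forces $\alpha_k\geq\tilde\alpha$; unfolding \cref{def:current_perfind} (with $\bu^r_{rh}|_{[t_k,t_{k+1})}=\bar\bu_k^r$) this is precisely \eqref{eq:RelaxedDDP_dissipativity}. \cref{theorem:RelaxedDDP} then delivers \eqref{eq:ROM_suboptimaity_inequality} unconditionally and \eqref{eq:ROM_exponential_stability} once \cref{ass:1} is added, and -- this will be decisive below -- its constants $(\tilde\zeta,\tilde C_{rh})$ depend only on $(\tilde\alpha,\delta,T)$ and not on the reduced space.

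The hard part is (iii), and my plan is to exploit exactly this basis-independence. Because the RDP inequality holds at each accepted step with $r$-independent constants, \cref{theorem:RelaxedDDP} already forces $\norm{\tilde y_k}{H}^2\leq\tilde C_{rh}e^{-\tilde\zeta t_k}\norm{y_0}{H}^2$ and $\tilde y_{rh}\in L^2(0,\infty;V)$ regardless of how the bases were built, so the sampled states converge to $0$ in $H$; combined with the regularity estimate \eqref{e8} and the compact embedding $V\hookrightarrow\hookrightarrow H$, the set $\{\tilde y_k\}_{k\in\N_0}\cup\{0\}$ is relatively compact in $H$. On this compact set I would run a Dini-type argument: for fixed $r$ the map $\yint\mapsto\ubar\alpha^r(\tint,\yint)$ is continuous (well-posedness of \eqref{eq:reducedLTV}, \eqref{eq:ROM_opsys} and continuity of the estimators in the data), it increases monotonically in $r$ to the continuous limit $\alpha(\bar\bu(\tint,\yint),\tint,\yint)\geq\alpha>\tilde\alpha$, and monotone convergence of continuous functions to a continuous limit on a compact set is uniform; this yields a single $\bar r$ with $\ubar\alpha^{\bar r}(t_k,\tilde y_k)\geq\tilde\alpha$ for all $k$, i.e.\ $r_k\leq\bar r<\infty$. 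The two technical points I expect to fight are making this convergence uniform in the node time $t_k$ as well -- which should follow from the $L^\infty(0,\infty)$ bounds on $A,B$ in \cref{ass:pde}, rendering every constant $t$-uniform -- and the a priori dependence of the state set on the adaptively chosen bases, which is precisely what the basis-independent exponential confinement removes.
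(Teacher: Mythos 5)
Your parts (i) and (ii) are essentially the paper's own argument: the bound $\alpha(\bar\bu_k,t_k,\tilde y_k)\geq\alpha$ follows from \eqref{e20s}, \cref{ass:ROMconsistency} then yields a finite accepting dimension $r_k$, the estimator sandwich $\alpha_k\in[\ubar\alpha_k^{r_k},\bar\alpha_k^{r_k}]$ turns acceptance into \eqref{eq:RelaxedDDP_dissipativity}, and \cref{theorem:RelaxedDDP} closes both claims. The divergence is part (iii): the paper fixes $\hat\alpha\in(\tilde\alpha,\alpha)$, postulates that $r_k$ is chosen so that $\tilde\alpha\leq\ubar\alpha_k^{r_k}\leq\hat\alpha$, and argues $\sup_k r_k<\infty$ by contradiction; your compactness-plus-Dini route is genuinely different, but as proposed it fails. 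The central obstruction is that the limit function in your Dini argument is singular exactly at the accumulation point of your compact set: the performance index \eqref{eq:current_pfind} and its estimator \eqref{eq:ROMsuboptimality_suff_fullROM} are quotients whose denominators $J_\delta$, respectively $J^r_\delta+\Delta_{J_\delta}$, vanish as $\yint\to 0$ (for $\yint=0$ the optimal control is zero and $V_T(\tint,0)=J_\delta=0$). Exponential decay forces $\tilde y_k\to 0$ in $H$, so the one accumulation point of $\{\tilde y_k\}$ is precisely where the limit $\alpha(\bar\bu(\tint,\cdot),\tint,\cdot)$ is not defined, let alone continuous; Dini's theorem cannot be applied on $\{\tilde y_k\}\cup\{0\}$ unless one first proves a uniform positive lower bound on the index near $0$, which is essentially the statement to be proved. (Scaling invariance would rescue this for purely quadratic costs, but \cref{ass:running_cost} admits $g$, e.g.\ indicator functions of constraint sets, that are not positively homogeneous of degree two.)

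Two further steps would also fail as written. First, the circularity you flag is not actually removed by the "basis-independent exponential confinement": relative compactness of $\{\tilde y_k\}\cup\{0\}$ is a property of the one trajectory generated by the adaptively chosen bases, whereas rerunning \cref{algo:ROMRHC} with the uniform dimension $\bar r$ produced by Dini changes the controls and hence the sample points; the basis-independent bound \eqref{eq:RelaxedDDP_exponential_stability} only confines all possible sampled states to $H$-balls of shrinking radius, and balls are not relatively compact in the infinite-dimensional space $H$. Second, the nodes $t_k\to\infty$ live in a non-compact time set, and the $L^\infty(0,\infty)$ bounds on $A,B$ in \cref{ass:pde} make the \emph{constants} in the error estimates $t$-uniform but not the \emph{rate} of the convergence $\ubar\alpha^r(\tint,\cdot)\uparrow\alpha(\cdot)$ as $r\to\infty$: for a time-varying operator, a fixed $V_r$ can approximate the optimality system well on early windows $[t_k,t_k+T]$ and arbitrarily badly on later ones. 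A minor additional point: your justification of the bracket $\ubar\alpha_k^{r_k}<\alpha$ via the \emph{minimal} accepting $r_k$ does not work either, since a monotone sequence may jump over the window $[\tilde\alpha,\alpha)$ in a single increment; this is the same delicate selection issue the paper resolves (admittedly tersely) by explicitly restricting to choices with $\ubar\alpha_k^{r_k}\leq\hat\alpha<\alpha$.
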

\begin{proof}
    Note that it holds $ \alpha(\bar \bu_k,t_k,y_k)\geq \alpha$ for all $k\in\N_0$, since otherwise this would be a contradiction to \cref{theorem:stabilityRHC}. Then \eqref{eq:ROM_accurate_property}, implies that 
    $\ubar \alpha_k^r\uparrow \alpha(\bar \bu_k,t_k,y_k)\geq \alpha>\tilde \alpha$ as $r\to \infty$. Therefore, a dimension $r_k\in \N$ with $\ubar \alpha_k^{ r_k}\geq \tilde \alpha$ exists. 
    For $r=r_k$, it holds $\tilde\alpha \leq \ubar \alpha_k^{{r_k}}$ for all $k\in \N_0$. Rewriting this inequality using the definition of $\ubar{\alpha}_k^r$ from \eqref{eq:ROMsuboptimality_suff_fullROM}, together with the error estimator properties from \cref{cor:errorest_optimalval_diff_initguess,cor:costbound}, yields
    \begin{align*}
    &\tilde\alpha J_\delta(\bar \bu^{{r_k}}_k;t_k,\tilde y_k)\leq
         \tilde\alpha \big(J^{{r_k}}_\delta(\bar \bu_k^{{r_k}};t_k,\tilde y_k)+\Delta_{J_\delta}(t_k, \tilde y_k, \bar \bu_k^{{r_k}}, 0 )\big)\\
         &\quad\leq V_T^{{r_k}}(t_k, \tilde y_k)-\Delta_{V_T}(t_k,\tilde y_k,0)-V_T^{{r_k}}(t_{k+1},y^{{r_k}}(t_{k+1};\bar \bu_k^{{r_k}},t_k,\tilde y_k))\\
         &\qquad-\Delta_{V_T}(t_{k+1},y^{{r_k}}(t_{k+1};\bar \bu_k^{{r_k}},t_k,\tilde y_k),\Delta_{\tilde y_{k+1}})\leq V_T(t_{k},\tilde y_k)-V_T(t_{k+1},\tilde y_{k+1})
    \end{align*}
    for all $k\in \N_0$. Observe that $\tilde y_k = \tilde y_{rh}(t_k)$, so that \eqref{eq:RelaxedDDP_suboptimaity_inequality} is satisfied with $\bu = \bu^r_{rh}$ and $y = \tilde y_{rh}$. The claim then follows directly from \cref{theorem:RelaxedDDP}.
\end{proof}
Using the triangle inequality, the following result can be shown.
\begin{corollary}[Exponential decay of the error]
    In the situation of \cref{theorem:stabilityRHC,theorem:StabilityReducedRHC}, we have    
    \begin{equation}\nonumber
        \norm{y_{rh}(t;0,y_0)-\tilde y_{rh}(t;0,y_0)}{H}^2\leq \hat Ce^{-\hat \zeta t}\norm{y_0}{H}^2 \quad  \text{ for }t\geq 0,
    \end{equation}
    where the positive numbers $(\hat \zeta,\hat C)$ depend on $(\alpha, \tilde\alpha,\delta,T)$.
\end{corollary}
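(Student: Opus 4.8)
The plan is to combine the two exponential stability estimates already available for the two closed loops, rather than to analyze the error dynamics between $y_{rh}$ and $\tilde y_{rh}$ directly. Both trajectories start from the same initial value $y_0$, and each is individually exponentially stable: \cref{theorem:stabilityRHC} gives $\norm{y_{rh}(t)}{H}^2 \le C_{rh}e^{-\zeta t}\norm{y_0}{H}^2$ via \eqref{ed28}, while \cref{theorem:StabilityReducedRHC} gives $\norm{\tilde y_{rh}(t)}{H}^2 \le \tilde C_{rh}e^{-\tilde\zeta t}\norm{y_0}{H}^2$ via \eqref{eq:ROM_exponential_stability}. Since both states decay to zero, so must their difference, and the triangle inequality in $H$ turns this observation into the desired quantitative bound.

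Concretely, I would first take square roots of the two stability estimates to obtain bounds on $\norm{y_{rh}(t)}{H}$ and $\norm{\tilde y_{rh}(t)}{H}$, then apply the triangle inequality,
\begin{equation*}
    \norm{y_{rh}(t)-\tilde y_{rh}(t)}{H} \le \norm{y_{rh}(t)}{H}+\norm{\tilde y_{rh}(t)}{H} \le \big(\sqrt{C_{rh}}\,e^{-\zeta t/2}+\sqrt{\tilde C_{rh}}\,e^{-\tilde\zeta t/2}\big)\norm{y_0}{H}.
\end{equation*}
Squaring and using $(a+b)^2\le 2a^2+2b^2$ yields $\norm{y_{rh}(t)-\tilde y_{rh}(t)}{H}^2 \le 2\big(C_{rh}e^{-\zeta t}+\tilde C_{rh}e^{-\tilde\zeta t}\big)\norm{y_0}{H}^2$. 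Setting $\hat\zeta\coloneqq\min\{\zeta,\tilde\zeta\}$ and bounding $e^{-\zeta t},e^{-\tilde\zeta t}\le e^{-\hat\zeta t}$ for $t\ge 0$, the claim follows with $\hat C\coloneqq 2(C_{rh}+\tilde C_{rh})$.

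Finally, I would track the dependence of the constants: $(C_{rh},\zeta)$ depend on $(\alpha,\delta,T)$ by \cref{theorem:stabilityRHC} and $(\tilde C_{rh},\tilde\zeta)$ on $(\tilde\alpha,\delta,T)$ by \cref{theorem:StabilityReducedRHC}, so $(\hat C,\hat\zeta)$ depend only on $(\alpha,\tilde\alpha,\delta,T)$, as asserted. There is essentially no technical obstacle here; the only point worth emphasizing is that the finer route through the error estimators of \cref{sec:ROM} (controlling the difference of the two controls and propagating it through the state equation) is unnecessary, since the two independent decay estimates already pin down the difference. The sole bookkeeping step is reconciling the two decay rates, which is handled by passing to their minimum.
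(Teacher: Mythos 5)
Your proposal is correct and matches the paper's intended argument: the paper itself introduces this corollary with ``Using the triangle inequality, the following result can be shown,'' i.e., it combines the two exponential stability estimates \eqref{ed28} and \eqref{eq:ROM_exponential_stability} exactly as you do. Your bookkeeping of the constants $\hat C = 2(C_{rh}+\tilde C_{rh})$, $\hat\zeta=\min\{\zeta,\tilde\zeta\}$ and their dependence on $(\alpha,\tilde\alpha,\delta,T)$ is also consistent with the statement.
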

\begin{remark}\label{rem:perfrind}
    If the performance estimator $\ubar \alpha_k^r$ is employed to construct the ROM-RHC $\bu_{rh}^r$, the results of \cref{theorem:StabilityReducedRHC} also apply to the reduced cost $J^r_\infty$ in place of $J_\infty$. Consequently, using $\ubar \alpha_k^r$ stabilizes both the FOM and the ROM simultaneously. {The performance conditions
\eqref{eq:ROMsuboptimality_suff} or $\ubar{\alpha}_k^r \ge \tilde\alpha$
can be interpreted as balancing the error in the decrease of the two subsequent value functions relative to the cost along the controlled trajectory. These conditions are local in the sense that the model space and its
dimension may vary with $k$.}
\end{remark}

\subsection{POD-Galerkin model reduction}\label{subsec:POD}
In the following, we show how \cref{algo:ROMRHC} can be implemented using orthonormal bases generated by POD. As a first step, we verify \cref{ass:ROMconsistency}.
\begin{lemma}
    \label{lem:alpha_convergence}
    Let $(v_n)_{n\in \N}$ be a complete orthonormal basis of the separable Hilbert space $V$ and let $V_r=\text{\emph{span}}\{v_1,\ldots,v_r\}$.
    Then,  \cref{ass:ROMconsistency} is satisfied.
\end{lemma}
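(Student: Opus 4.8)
The plan is to fix the index $k\in\N_0$ together with the (given) state $\tilde y_k\in H$ and to pass to the limit $r\to\infty$ in the defining formula \eqref{eq:ROMsuboptimality_suff_fullROM} for $\ubar\alpha_k^r$, showing that its numerator and denominator converge to those of the full-order performance index $\alpha(\bar\bu_k,t_k,\tilde y_k)$ from \eqref{eq:current_pfind}. The engine is \cref{theo:os_conv}: since $V_r=\spn\{v_1,\dots,v_r\}$ is generated by a complete orthonormal basis of $V$, the reduced optimality-system solution $\bar x^r=(\bar y^r,\bar\bu_k^r,\bar p^r)$ of \eqref{eqn:reducedStabilizationProblem} for $(\tint,\tildeyint)=(t_k,\tilde y_k)$ converges to the full-order solution $\bar x=(\bar y,\bar\bu_k,\bar p)$ in $\mX_T(t_k)$ as $r\to\infty$. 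In particular $\bar\bu_k^r\to\bar\bu_k$ in $\mU_T(t_k)=L^2(U)$, $\bar y^r\to\bar y$ in $\mY_T(t_k)\hookrightarrow C([t_k,t_k+T];H)$, and $\bar p^r\to\bar p$ in $\mY_T(t_k)$.

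First I would show that every error estimator entering \eqref{eq:ROMsuboptimality_suff_fullROM} tends to zero. The state and adjoint residuals vanish in $L^2(t_k,t_k+T;V')$: indeed $\Res_y(\bar y^r,\bar\bu_k^r)=B\bar\bu_k^r-A\bar y^r-\partial_t\bar y^r\to B\bar\bu_k-A\bar y-\partial_t\bar y=0$ by the $L^\infty$-bounds on $A,B$ and the convergence of $\bar x^r$, using that the full-order residuals vanish since $\bar y=y(\bar\bu_k)$, $\bar p=p(\bar y)$ solve the state/adjoint equations, and analogously for $\Res_p$. The initial projection error satisfies $\|\tilde y_k-\Pi^H_{V_r}\tilde y_k\|_H\to0$, because $\bigcup_r V_r$ is dense in $V$ and hence in $H$. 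Consequently the estimators $\bar\Delta_y,\bar\Delta_p,\bar\Delta_\bu$ of \cref{Lemma:ResBasStateError,lem:ResAdjest,theo:oc_est}, and therefore $\Delta_{V_T}(t_k,\tilde y_k,0)$, $\Delta_{J_\delta}(t_k,\tilde y_k,\bar\bu_k^r,0)$ and $\Delta_{\tilde y_{k+1}}$, all converge to $0$; \cref{cor:errorest_optimalval_diff_initguess} then yields $V_T^r(t_k,\tilde y_k)\to V_T(t_k,\tilde y_k)$.

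The denominator requires care for the nonsmooth part. Its smooth part $\tfrac12\|\bar y^r\|_{L^2(t_k,t_{k+1};H)}^2+\tfrac\lambda2\|\bar\bu_k^r\|_{L^2(t_k,t_{k+1};U)}^2$ converges to the full-order counterpart by the strong convergences above. For the contribution $\int_{t_k}^{t_{k+1}}g(\bar\bu_k^r(t))\,\dt$ I would combine lower semicontinuity with value convergence over the full horizon: by lsc of $g$ and Fatou's lemma, $\liminf_r\int_I g(\bar\bu_k^r)\ge\int_I g(\bar\bu_k)$ for each of $I=[t_k,t_{k+1}]$ and $I=[t_{k+1},t_k+T]$; on the other hand $V_T^r(t_k,\tilde y_k)\to V_T(t_k,\tilde y_k)$ together with the convergence of the smooth part over $[t_k,t_k+T]$ forces $\int_{t_k}^{t_k+T}g(\bar\bu_k^r)\to\int_{t_k}^{t_k+T}g(\bar\bu_k)$. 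Writing $\int_{t_k}^{t_k+T}=\int_{t_k}^{t_{k+1}}+\int_{t_{k+1}}^{t_k+T}$ and using the two $\liminf$ inequalities upgrades both subinterval limits to equalities, so in particular $\int_{t_k}^{t_{k+1}}g(\bar\bu_k^r)\to\int_{t_k}^{t_{k+1}}g(\bar\bu_k)$. Hence $J_\delta^r(\bar\bu_k^r;t_k,\tilde y_k)+\Delta_{J_\delta}\to J_\delta(\bar\bu_k;t_k,\tilde y_k)>0$ (for $\tilde y_k\neq0$).

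It remains to treat the two terms evaluated at $t_{k+1}$ with the $r$-dependent initial value $y^r_{k+1}=\bar y^r(t_{k+1})$, and this is the main obstacle, since \cref{theo:os_conv} only provides convergence for a \emph{fixed} initial datum. I would resolve it by an equicontinuity argument: taking $\bu=0$ in \eqref{eqn:reducedStabilizationProblem} and using a Galerkin analogue of \eqref{e8} (valid with the same $\eta_V,\eta_H$) gives the $r$-uniform bound $V_T^r(t_{k+1},\cdot)\le C\|\cdot\|_H^2$, so the convex functionals $V_T^r(t_{k+1},\cdot)$ are locally Lipschitz with an $r$-independent constant on a ball containing $\bar y(t_{k+1};t_k,\tilde y_k)$ and all $y^r_{k+1}$. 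Since $y^r_{k+1}\to\bar y(t_{k+1};t_k,\tilde y_k)$ in $H$ and $V_T^r(t_{k+1},\bar y(t_{k+1};t_k,\tilde y_k))\to V_T(t_{k+1},\bar y(t_{k+1};t_k,\tilde y_k))$ at this fixed datum (again \cref{cor:errorest_optimalval_diff_initguess} and \cref{theo:os_conv} at base $t_{k+1}$), the uniform Lipschitz bound yields $V_T^r(t_{k+1},y^r_{k+1})\to V_T(t_{k+1},\bar y(t_{k+1};t_k,\tilde y_k))$; an analogous combined convergence (refinement of $V_r$ together with $y^r_{k+1}\to\bar y(t_{k+1})$ and $\Delta_{\tilde y_{k+1}}\to0$, which amounts to a strengthening of \cref{theo:os_conv} to convergent initial data) shows $\Delta_{V_T}(t_{k+1},y^r_{k+1},\Delta_{\tilde y_{k+1}})\to0$. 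Assembling the numerator and denominator limits in \eqref{eq:ROMsuboptimality_suff_fullROM} gives
\[
\ubar\alpha_k^r\longrightarrow\frac{V_T(t_k,\tilde y_k)-V_T\big(t_{k+1},\bar y(t_{k+1};t_k,\tilde y_k)\big)}{J_\delta(\bar\bu_k;t_k,\tilde y_k)}=\alpha(\bar\bu_k,t_k,\tilde y_k),
\]
which is exactly \eqref{eq:ROM_accurate_property}. The monotone ``from below'' character of the limit is consistent with the guaranteed lower-bound property $\ubar\alpha_k^r\le\alpha(\bar\bu_k^r,t_k,\tilde y_k)$ established before \cref{ass:ROMconsistency}, and plain convergence to a limit $\ge\alpha>\tilde\alpha$ is all that is ultimately used in the proof of \cref{theorem:StabilityReducedRHC}.
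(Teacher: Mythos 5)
Your proposal is correct and rests on the same engine as the paper's proof: \cref{theo:os_conv} gives $\bar x^r\to\bar x$ in $\mX_T(t_k)$, from which all residual-based estimators at $t_k$ vanish, and the step $t_{k+1}$ is then treated as a perturbed-initial-value problem. (The paper routes the residual decay through the equivalence \cref{lemma:app_error_equivalence}, while you compute the residual limits directly from $\Res_y(\bar y^r,\bar\bu_k^r)\to 0$, $\Res_p(\bar y^r,\bar p^r)\to 0$; these are interchangeable.) Where you genuinely go beyond the paper is that you prove convergence of the numerator and denominator of $\ubar\alpha_k^r$ themselves, not only of the estimators. The paper's proof stops at $\Delta_{V_T}(t_k,\tilde y_k,0)\to0$, $\Delta_{J_\delta}\to0$ and $\Delta_{V_T}(t_{k+1},\cdot,\cdot)\to0$, which by the estimator properties only yields $V_T^r(t_k,\tilde y_k)\to V_T(t_k,\tilde y_k)$ and $J^r_\delta(\bar\bu_k^r)-J_\delta(\bar\bu_k^r)\to0$; it is silent on the two remaining limits that you isolate and settle: (i) $J_\delta(\bar\bu_k^r;t_k,\tilde y_k)\to J_\delta(\bar\bu_k;t_k,\tilde y_k)$, which is delicate because $g$ is only lower semicontinuous and $[t_k,t_{k+1}]$ is a strict subinterval of the horizon -- your Fatou-plus-forcing argument (liminf bounds on both subintervals combined with full-horizon convergence of $\int g$ extracted from value convergence and convergence of the smooth part) is exactly what is needed; and (ii) $V_T^r(t_{k+1},y^r_{k+1})\to V_T(t_{k+1},\bar y_k(t_{k+1}))$ with $r$-dependent initial data, which you obtain from the $r$-uniform bound $V^r_T(t_{k+1},\cdot)\le C\norm{\cdot}{H}^2$ (a Galerkin analogue of \eqref{e8}) and the resulting $r$-uniform local Lipschitz constant of these convex functionals -- again absent from the paper's argument. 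Both proofs share the same sketchiness at the very last step: the decay of $\Delta_{V_T}(t_{k+1},y^r_{k+1},\Delta_{\tilde y_{k+1}})$ requires a version of \cref{theo:os_conv} for initial data varying with $r$, which the paper dispatches with ``similar arguments'' and which you at least name explicitly as a needed strengthening. Finally, you are right that neither argument establishes the monotone convergence suggested by the notation $\uparrow$ in \eqref{eq:ROM_accurate_property}, and that plain convergence to a limit $\geq\alpha>\tilde\alpha$ is all that the proof of \cref{theorem:StabilityReducedRHC} actually uses.
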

\begin{proof}
    Let $\bar x_k^r=(\bar y^r_k,\bar\bu^r_k,\bar p^r_k)$ and $\bar x_k=(\bar y_k,\bar\bu_k,\bar p_k)$ be the solution of \eqref{eq:ROML} and \eqref{eq:FOML}, respectively, for $(\tint,\yint)=(t_k,\tilde y_k)$. Note that by definition, we have
    \begin{equation}\nonumber
         \alpha(\bar \bu_k,t_k,\tilde y_k)\coloneqq \frac{ V_T(t_k, \tilde y_k) -  V_T(t_{k+1}, y(t_{k+1};\bar \bu_k,t_k,\tilde y_k))}{J_\delta(\bar \bu_k;t_k,\tilde y_k) },
    \end{equation}
    and the lower bound $\ubar{\alpha}_k^r$ is defined in \eqref{eq:ROMsuboptimality_suff_fullROM}. Thus, to prove \cref{ass:ROMconsistency}, we have to show convergence of the error and its estimators at time steps $t_k, t_{k+1}$. First, we consider the time step $t_k$ without an error in the initial condition $\tilde y_k$. By \cref{theo:os_conv}, we have $\bar x^r \to \bar x$ in $\mX_T(\tint)$ as $r\to \infty$. Hence, by triangle inequality, $y^r(\bar \bu^r_k,t_k,\tilde y_k)\to y(\bar \bu^r_k,t_k,\tilde y_k)$, $p^r(\bar y^r_k)\to p(\bar y^r_k)$ in $\mY_T(t_k)$, and therefore by the error estimator equivalence \cref{lemma:app_error_equivalence}
     $\Delta_y(0,0,\bar y^r_k,\bar \bu_k^r)$, $\Delta_p(0,\bar y^r_k,\bar p^r_k)\to 0 \ (r\to \infty)$. Therefore, by \cref{theo:oc_est}, $\bar \Delta_{y,H}(\bar x_k^r,0)\to 0$, and $\bar \Delta_\bu(\bar x_k^r,0)\to 0$ and also $\bar \Delta_{y}(t_k,\tilde y_k)$, $\bar \Delta_{p}(t_k,\tilde y_k)\to 0 $ by their definition in \eqref{eq:optimal_error_ests1} in \cref{cor:errorest_optimalval_diff_initguess}. Therefore $\Delta_{V_T}(t_k,\tilde y_k,0)\to 0$ and $\Delta_{J_\delta}(t_k, \tilde y_k, \bar \bu^r_k, 0 )\to 0$ by definition in \eqref{eq:valueest_line1} in \cref{cor:errorest_optimalval_diff_initguess} and \cref{cor:costbound}, respectively. Consider now time step $t_{k+1}$ and let $\bar x_{k+1}^r=(\bar y^r_{k+1},\bar\bu^r_{k+1},\bar p^r_{k+1})$ be the solution of \eqref{eq:ROML} for $(\tint,\yint)=(t_{k+1},\bar y^r_k(t_{k+1}))$ and $\bar x_{k+1}=(\bar y_{k+1},\bar\bu_{k+1},\bar p_{k+1})$ be the solution of \eqref{eq:FOML} for $(\tint,\yint)=(t_{k+1},\bar y_k(t_{k+1}))$. Hence, there is an error in the initial condition that decays using the argumentation for step $t_k$. Therefore, one can apply similar arguments to show that $\Delta_{V_T}(t_{k+1},y^r(t_{k+1}),\Delta_{\tilde y_{k+1}})\to 0$ as $r\to \infty$.
\end{proof}

In practice, orthonormal bases that are $L^2(t_k,t_k+T;V)$-optimal w.r.t. to a given snapshot set $S\subset L^2(t_k,t_k+T;V)$ can be constructed by POD via the minimization problem
\begin{align}
    \label{eqn:PODminimization}
     \min_{V_r=\linspan(v_i)_{i=1}^r\subseteq V} \sum_{s\in S}\norm{s-\Pi^V_{V_r}s }{L^2(t_k,t_k+T;V)}^2\mathrm dt
     \ \  \text{s.t.}\ \ \langle v_i, v_j\rangle_V = \delta_{ij} \ (i,j = 1,\ldots, r),
\end{align}
where $\Pi^V_{V_r}v\coloneqq \sum_{i=1}^{r} \langle  v, v_i\rangle_V v_i $ for $v\in V$.
The solution to \eqref{eqn:PODminimization} is called the POD basis $(v_i)_{i=1}^{r}$ of rank $r\leq r_{\max} \in \N \cup \{\infty \}$ and can be characterized as eigenfunctions of the correlation operator (cf. \cite{GubV17}). In the following theorem, we state that choosing the FOM solution {of \eqref{eqn:fin_hor_ocp} at time instants $t_k$ and $t_{k+1}$} as snapshots either yields an orthonormal basis of \( V \), or leads to an interpolation of the full-order performance index {$\alpha(\bar \bu_k,t_k,\tilde y_k)$} using a finite-dimensional subspace. 
\begin{lemma}[POD model update]\label{lem:remark_alpha_convergence}
    Let $V_{r_{\text{\tiny old}}}$ be the POD space constructed with the snapshots set $S_{\text{\tiny old}}\subset H$ with maximal rank $r_{\text{\tiny old}}\in \N$. Suppose at step $k\in \N$, it holds, $\ubar \alpha_k^{r_{\text{\tiny old}}}< \tilde\alpha$ {with $\tilde\alpha\in (0,\alpha)$ chosen as in \cref{theorem:StabilityReducedRHC}}. Then, construct $V_{r_{\text{\tiny new}}}$ by POD with maximal rank $r_{\text{\tiny new}} \in \N \cup \{\infty\}$ with the new snapshot set $S_{\text{\tiny new}}\coloneqq  S_k\cup S_{k+1}\subset \mY_T(t_k)$ with $S_k=\{\bar y_k, \bar p_k\}$ from the proof of \cref{lem:alpha_convergence}, $S_{k+1} =\{\bar y_{k+1}(\cdot-\delta), \bar p_{k+1}(\cdot-\delta)\}$, where $\bar x(t_{k+1},\bar y_k(t_{k+1}))=(\bar y_{k+1},\bar \bu_{k+1},\bar p_{k+1})$
    solve \eqref{eq:FOML} for $(\tint,\yint)=(t_{k+1},\bar y_k(t_{k+1}))$. If $r_{\text{\tiny new}}=\infty$, then $V_{r_{\text{\tiny new}}}=V$ with complete orthonormal basis $(v_r)_{r\in \N}$. If $r_{\text{\tiny new}}\in \N$, then 
     $\ubar \alpha_k^{r_\text{\tiny new}} = \alpha(\bar \bu_k,t_k,\tilde y_k)>\tilde \alpha$.
\end{lemma}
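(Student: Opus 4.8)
The plan is to argue by a case distinction on the maximal POD rank $r_{\text{\tiny new}}$, using as the central engine the defining optimality of the POD basis in \eqref{eqn:PODminimization}. First I would record that the correlation operator associated with \eqref{eqn:PODminimization} is a compact, self-adjoint, positive semidefinite operator on $V$, so that $r_{\text{\tiny new}}$ equals the number of its nonzero eigenvalues and $(v_r)$ is an orthonormal basis of its closed range, which coincides with $\overline{\linspan}\{s(t):s\in S_{\text{\tiny new}},\,t\in(t_k,t_k+T)\}$. At maximal rank the residual in \eqref{eqn:PODminimization} vanishes, hence $\Pi^V_{V_{r_{\text{\tiny new}}}}s=s$ in $L^2(t_k,t_k+T;V)$ for every snapshot $s\in S_{\text{\tiny new}}$, i.e.\ $s(t)\in V_{r_{\text{\tiny new}}}$ for almost every $t$. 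In particular $V_{r_{\text{\tiny new}}}$ contains the spatial values of all four trajectories in $S_k\cup S_{k+1}$; the time shift by $\delta$ only re-parametrizes the common window and leaves the spatial span unchanged.

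If $r_{\text{\tiny new}}\in\N$, I would chain the interpolation property \cref{lem:interpolation_properties} across the two sampling instants. At $t_k$ the FOM optimal trajectories are precisely $\bar y_k,\bar p_k\in\mY_T^r(t_k)$, so \cref{lem:interpolation_properties_2} gives $\bar x_k=\bar x_k^r$; by \cref{lem:interpolation_properties_4} this forces $\Delta_{V_T}(t_k,\tilde y_k,0)=0$, and the same reasoning applied to the cost yields $\Delta_{J_\delta}(t_k,\tilde y_k,\bar\bu_k^r,0)=0$ together with $V_T^r(t_k,\tilde y_k)=V_T(t_k,\tilde y_k)$ and $J_\delta^r=J_\delta$. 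Crucially, $\bar x_k=\bar x_k^r$ also gives $\bar\bu_k^r=\bar\bu_k$ and $y^r(\cdot\,;\bar\bu_k^r,t_k,\tilde y_k)=\bar y_k$, so the value transported to step $k+1$ is $y^r_{k+1}=\bar y_k(t_{k+1})$, which coincides with the FOM initial datum used to generate $\bar y_{k+1},\bar p_{k+1}$; by the error-estimator equivalence \cref{lemma:app_error_equivalence} this makes $\Delta_{\tilde y_{k+1}}=0$. Since $S_{k+1}$ places $\bar y_{k+1},\bar p_{k+1}$ in $\mY_T^r(t_{k+1})$, \cref{lem:interpolation_properties_2,lem:interpolation_properties_4} apply once more at $t_{k+1}$ with initial value $\bar y_k(t_{k+1})$, giving $\Delta_{V_T}(t_{k+1},y^r_{k+1},\Delta_{\tilde y_{k+1}})=0$ and $V_T^r(t_{k+1},y^r_{k+1})=V_T(t_{k+1},\bar y_k(t_{k+1}))$. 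Substituting all vanishing estimators into \eqref{eq:ROMsuboptimality_suff_fullROM} collapses $\ubar\alpha_k^{r_{\text{\tiny new}}}$ to the FOM quotient \eqref{eq:current_pfind}, i.e.\ $\ubar\alpha_k^{r_{\text{\tiny new}}}=\alpha(\bar\bu_k,t_k,\tilde y_k)$, and the strict bound $>\tilde\alpha$ follows since \cref{theorem:stabilityRHC} forces $\alpha(\bar\bu_k,t_k,\tilde y_k)\ge\alpha>\tilde\alpha$.

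If $r_{\text{\tiny new}}=\infty$, the correlation operator has infinitely many nonzero eigenvalues, so $(v_r)_{r\in\N}$ is an infinite orthonormal system whose closed span equals $\overline{\linspan}\{s(t):s\in S_{\text{\tiny new}},\,t\}$. Identifying this closed span with $V$ — that is, using that the exact state and adjoint snapshots are rich enough to span a dense subspace — produces the complete orthonormal basis asserted, and the claim then reduces to \cref{theo:os_conv} and \cref{lem:alpha_convergence}: in this case one obtains the asymptotic convergence $\ubar\alpha_k^r\uparrow\alpha(\bar\bu_k,t_k,\tilde y_k)$ rather than one-shot interpolation, which is exactly \cref{ass:ROMconsistency} along the trajectory.

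The main obstacle I expect is the bookkeeping at the second sampling instant in the finite-rank case: one must verify that the initial value transported from step $k$ to step $k+1$ carries \emph{no} error, so that $\Delta_{\tilde y_{k+1}}=0$ and the ROM and FOM problems at $t_{k+1}$ genuinely share the datum $\bar y_k(t_{k+1})$ — this is precisely why both $S_k$ and the shifted $S_{k+1}$ must be included, and it is the step where the interpolation lemma and the estimator equivalence must be combined with care. A secondary delicate point is the identification $V_{r_{\text{\tiny new}}}=V$ in the infinite-rank case, which genuinely rests on density of the snapshot span in $V$ and should be traced back to the regularity (e.g.\ time-analyticity) of the parabolic state and adjoint trajectories.
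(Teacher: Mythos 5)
Your proposal is correct and takes essentially the same approach as the paper's proof: maximal-rank POD reproduces all snapshots in the reduced space, the interpolation properties \cref{lem:interpolation_properties_2} and \cref{lem:interpolation_properties_4} of \cref{lem:interpolation_properties} then force every estimator entering \eqref{eq:ROMsuboptimality_suff_fullROM} to vanish and every ROM quantity (including the transported initial value at $t_{k+1}$) to coincide with its FOM counterpart, so that $\ubar{\alpha}_k^{r_{\text{\tiny new}}}$ collapses to $\alpha(\bar{\bu}_k,t_k,\tilde{y}_k)\geq\alpha>\tilde{\alpha}$. Your detailed bookkeeping at the two sampling instants is precisely what the paper compresses into the phrase \enquote{we interpolate the exact solution of the optimality systems at $t_k$ and $t_{k+1}$}, and your reading of the $r_{\text{\tiny new}}=\infty$ case is consistent with the paper's, which likewise asserts completeness of the basis directly from the orthonormality constraint in \eqref{eqn:PODminimization} rather than proving density of the snapshot span.
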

\begin{proof}
    If $\ubar \alpha_k^{r_{\text{\tiny old}}} < \tilde\alpha$, then at least one of the quantities $\bar y_k$, $\bar p_k$, $\bar y_{k+1}$, or $\bar p_{k+1}$ is not contained in $\mY_T^r(\tint)$, since otherwise, by the interpolation property of the optimality system from \cref{lem:interpolation_properties_2} in \cref{lem:interpolation_properties} {at time steps $k$ and $k+1$}, we would have {$\bar y_k=\bar y_k^r$, $\bar p_k=\bar y_k^r$ and $\bar y_{k+1}=\bar y_{k+1}^r$, $\bar p_{k+1}=\bar y_{k+1}^r$.  Thus, by the definition of $\ubar \alpha_k^{r}$ in \eqref{eq:ROMsuboptimality_suff_fullROM} and  $\alpha(\bar \bu_k, t_k, \tilde y_k)$ in \eqref{eq:current_pfind} it would hold} 
    \(
    \ubar \alpha_k^{r_{\text{\tiny old}}} = \alpha(\bar \bu_k, t_k, \tilde y_k) \geq \alpha > \tilde \alpha,
    \)
    which contradicts the assumption {$\ubar \alpha_k^{r_{\text{\tiny old}}}< \tilde \alpha$}. {Hence, there is new snapshot information in the set $S_{\text{\tiny new}}$. Note that by the time scaling of the snapshots in $S_{k+1}$ it really holds $S_{\text{\tiny new}}\subset \mY_T(t_k)\subset {L^2(t_k,t_k+T;V)}$.} Thus, we  can solve \eqref{eqn:PODminimization} for $S=S_{\text{\tiny new}}$ to obtain the new space $V_{r_{\text{\tiny new}}}$. If $r_{\text{\tiny new}} = \infty$, then by the constraint in \eqref{eqn:PODminimization}, the POD basis forms a complete orthonormal basis of $V$ {and we are in the situation of \cref{lem:alpha_convergence}}. If instead $r_{\text{\tiny new}} \in \mathbb{N}$, then all snapshots are reproduced exactly in $V_{r_{\text{\tiny new}}}$, so that $s \in \mY_T^r(t_k)$ for all $s \in S_{\text{\tiny new}}$. {Therefore, using the same argument as before,} we interpolate the exact solution of the optimality systems at $t_k$ and $t_{k+1}$ by \cref{lem:interpolation_properties_2} in \cref{lem:interpolation_properties}, which yields
    \(
    \ubar \alpha_k^{r_{\text{\tiny new}}} = \alpha(\bar \bu_k, t_k, \tilde y_k)>\tilde \alpha
    \). {Thus, \eqref{eq:ROM_accurate_property} in \cref{ass:ROMconsistency} is satisfied with equality for a finite $r=r_{\text{\tiny new}}$.} 
\end{proof}
{
\begin{remark}\label{rem:pod_update}
Hence, to update the POD-ROM in Line~\ref{algoline:2} of \cref{algo:ROMRHC}, we invoke
\cref{lem:remark_alpha_convergence}, compute the snapshot set
\(S_{\text{\tiny new}}\), and solve \eqref{eqn:PODminimization} with
\(S = S_{\text{\tiny new}}\). That is, we construct a combined reduced basis for the state and adjoint variables to be able to use the interpolation property \cref{lem:interpolation_properties} in the proof of \cref{lem:remark_alpha_convergence}. With this basis choice, \cref{lem:remark_alpha_convergence} guarantees that the prescribed performance criterion is satisfied when selecting the maximal POD rank \(r_{\max}\).
We emphasize that the choice \(r = r_{\max}\) is primarily of theoretical interest to
verify \cref{ass:ROMconsistency}. In practice, the reduced basis dimension is typically
chosen more heuristically, for instance by requiring that a prescribed
fraction of the snapshot energy is captured (cf.~\cite{GubV17}).
Moreover, the snapshot set \(S_{k+1}\) is introduced solely for theoretical purposes to
prove \cref{lem:remark_alpha_convergence} and was not required for the numerical
experiments presented in \cref{sec:NUMEXP}.
Since the computation of optimal snapshots constitutes the most time-consuming part of
\cref{algo:ROMRHC}, a promising extension is to combine the proposed approach with adaptive
optimization strategies, enabling efficient (possibly real-time) basis construction and
simultaneous optimization.
\end{remark}
}
\section{Concrete example}\label{sec:example}
Here, we present an example within the proposed framework for which \cref{ass:pde,ass:running_cost,ass:1} are satisfied (cf. \cite{azmi2019hybrid}). For $n\in \N$, consider a bounded Lipschitz domain $\Omega\subset\R^n$ and \eqref{eqn:infhorizonproblem} with
\begin{equation}
\label{eq:cost_example}
J_{\infty}(\mathbf{u};0,y_0):= \frac{1}{2}\int_{0}^{\infty}\norm{y(t)}{L^2(\Omega)}^2+\tfrac{\lambda}{2} |\mathbf{u}(t)|^2_{2}+\tfrac{\beta}{2} |\mathbf{u}(t)|^2_{1}\ \dt
\end{equation}
for $\lambda,\beta >0$, $\norm{\cdot}{2}$ being the Euclidian norm on $\R^m$, and $|\mathbf{u}(t)|_{1}=\sum_{i=1}^m|\bu_i(t)|$.             
Further, consider \eqref{eq:LTV} as
{\begin{equation}
    \label{eq:pde_example}
    \begin{aligned}
        \partial_t y(t,x)-\nu\Delta y(t,x) + a(t,x)y(t,x)+ b(t,x) \cdot \nabla y(t,x)&=\textstyle \sum\limits^m_{i =1} \mathbf{1}_{R_i}(x)\bu_i(t)&& (t,x)\in Q\coloneqq(0,\infty)\times\Omega,\\
          y(t,x)&=0&&(t,x)\in (0,\infty)\times \partial \Omega,\\
        y(0,x)&=y_0(x)&& x\in  \Omega,
    \end{aligned}
\end{equation}}
for $\nu>0$, $y_0\in L^2(\Omega)$, $a\in L^\infty(Q)$, $b\in L^\infty(Q; \R^n) $ with $\nabla\cdot b\in L^\infty(Q) $. Moreover, the actuators are chosen as indicator functions on rectangular subdomains \( R_i \subset \Omega \) (cf.~\cref{fig:ErrorEstOL}) for $i=1,\ldots,m$. 
Defining $U=\R^m$, $H=L^2(\Omega)$, $V=H_0^1(\Omega)$, {$A(t)=-\nu\Delta y(t) + a(t)y(t)+ b(t)\cdot\nabla y(t)\in \calL(V,V')$}, $B(t)=[\mathbf{1}_{R_1}, \ldots,\mathbf{1}_{R_m}]\in \calL(\R^m,V')$, $g(\bv)=\tfrac{\beta}{2} |\bv|^2_{1}$ for $\bv\in U$, we are in the situation of \cref{sec:FOM_stability_RHC} and the assumptions are satisfied.
\begin{lemma}
     \cref{ass:pde,ass:running_cost} are satisfied for \eqref{eq:cost_example}-\eqref{eq:pde_example} with $\eta_V = \nu$ and $\eta_H=\essinf\{a(t,\bx)-\nicefrac{1}{2}(\nabla \cdot b)(t,\bx)\,|\,(t,\bx)\in Q\}$. Furthermore, if the number of actuators $m\in\N$ is chosen sufficiently large, then \cref{ass:1} is satisfied.
\end{lemma}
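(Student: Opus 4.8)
The plan is to verify the three assumptions separately: \cref{ass:pde} and \cref{ass:running_cost} reduce to direct computations, while \cref{ass:1} is the substantive point and rests on an exponential-stabilizability argument that explains the phrase ``$m$ sufficiently large.'' For \cref{ass:pde} I would first write out the bilinear form $\langle A(t)y,v\rangle_{V',V}=\nu\int_\Omega\nabla y\cdot\nabla v+\int_\Omega a\,yv+\int_\Omega(b\cdot\nabla y)v$ on $V=H^1_0(\Omega)$. Uniform boundedness $A\in L^\infty(0,\infty;\mathcal L(V,V'))$ then follows from $\nu$ being constant and $a,b\in L^\infty(Q)$, estimating the lower-order terms with Poincar\'e's inequality; measurability in $t$ is inherited from $a,b$, and $B\in L^\infty(0,\infty;\mathcal L(U,V'))$ holds because $B$ is time-independent with columns $\mathbf 1_{R_i}\in L^2(\Omega)\hookrightarrow V'$. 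For \eqref{eq:weak_coercivity} I would test with $v$ and integrate the convection term by parts, using $v\in H^1_0(\Omega)$ and $\nabla\cdot b\in L^\infty(Q)$, to obtain $\int_\Omega(b\cdot\nabla v)v=-\tfrac12\int_\Omega(\nabla\cdot b)v^2$ and hence the identity $\langle A(t)v,v\rangle=\nu\|\nabla v\|_{L^2}^2+\int_\Omega(a-\tfrac12\nabla\cdot b)v^2$. Bounding the zeroth-order integral below by $\essinf_Q(a-\tfrac12\nabla\cdot b)\,\|v\|_H^2$ and using $\|\nabla\cdot\|_{L^2}\simeq\|\cdot\|_V$ on $H^1_0(\Omega)$ yields \eqref{eq:weak_coercivity} with $\eta_V=\nu$ and $\eta_H$ determined by $\essinf_Q(a-\tfrac12\nabla\cdot b)$ as stated.

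\cref{ass:running_cost} is immediate: $\ell\ge0$ by construction, and $g(\bv)=\tfrac\beta2|\bv|_1^2$ is finite-valued (hence proper), continuous (hence lower-semicontinuous), nonnegative with $g(0)=0$, and convex as the composition of the seminorm $\bv\mapsto|\bv|_1$ with the nondecreasing convex map $t\mapsto\tfrac\beta2 t^2$ on $[0,\infty)$. For \cref{ass:1}, since the running cost is nonnegative, $V_T(\tint,\yint)\le\int_\tint^{\tint+T}\ell(y(\bu),\bu)\,\dt\le\int_\tint^\infty\ell(y(\bu),\bu)\,\dt$ for any admissible $\bu$, so it suffices to exhibit one control whose infinite-horizon cost is $\le C\|\yint\|_H^2$ with $C$ independent of $(\tint,\yint)$; then the constant function $\gamma\equiv C$ satisfies \eqref{e7} (and, being bounded, even recovers the asymptotic optimality noted after \cref{pro1}).

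To build such a control, I would use that $V\hookrightarrow H$ compactly, so $-\Delta$ has discrete spectrum $0<\mu_1\le\mu_2\le\cdots\to\infty$ with eigenbasis $\{\phi_j\}$, and the lower-order part $a+b\cdot\nabla$ is uniformly (in $t$) relatively bounded with respect to $-\nu\Delta$, whence only finitely many modes can be unstable. I would fix $N$ with $\nu\mu_{N+1}>\eta_H$, split $H=H_u\oplus H_s$ with $H_u=\spn\{\phi_1,\dots,\phi_N\}$, and use that the high modes are strongly damped while only the finite-dimensional part $H_u$ requires actuation. For $m$ large the rectangles $R_i$ can be chosen so that $\{\Pi_{H_u}\mathbf 1_{R_i}\}_{i=1}^m$ spans $H_u$; this rank condition makes the unstable dynamics controllable and hence stabilizable by a bounded feedback $\bu=-Fy$, giving $\|y(t)\|_H\le Ce^{-\sigma(t-\tint)}\|\yint\|_H$ and $\|\bu\|_{L^2(\tint,\infty;U)}^2\le C\|\yint\|_H^2$. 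Inserting this control into $J_\infty$ and bounding the tracking and $\ell^2$ terms by these two decay estimates, and the nonsmooth term via $g(\bu)=\tfrac\beta2|\bu|_1^2\le\tfrac{\beta m}{2}|\bu|_2^2$, yields $\int_\tint^\infty\ell\,\dt\le C\|\yint\|_H^2$, as required.

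The main obstacle is exactly this stabilizability step: for the \emph{time-varying}, non-self-adjoint operator $A(t)$ one must argue that a single fixed finite-dimensional spectral subspace captures all instability uniformly in $t$, and that enough well-placed actuators produce a genuinely stabilizing feedback with a quantitative $L^2$-cost bound. The coupling between $H_u$ and $H_s$ induced by the perturbation has to be controlled, so the clean ``invariant stable subspace'' picture is only heuristic. I would handle the time dependence through the uniform G\aa rding constant $\eta_H$ rather than a $t$-dependent eigendecomposition, reduce controllability to the spanning condition on $\{\Pi_{H_u}\mathbf 1_{R_i}\}$ that a sufficiently fine family of rectangles satisfies, and for the remaining quantitative details appeal to the stabilizability construction in \cite{azmi2019hybrid}.
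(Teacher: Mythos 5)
Your proposal is correct and takes essentially the same route as the paper: \cref{ass:pde,ass:running_cost} are verified by the standard computations you give (integration by parts of the convection term yielding $\eta_V=\nu$ and the $\essinf$ constant, plus direct convexity/lsc of $g$), and \cref{ass:1} is settled by deferring to the stabilizability construction of \cite[Proposition~6.1]{azmi2019hybrid}, which is precisely what the paper does. In particular, your bound $g(\bv)=\tfrac{\beta}{2}|\bv|_1^2\le\tfrac{\beta m}{2}|\bv|_2^2$ is exactly the observation the paper highlights to transfer that quadratic-cost argument to the squared $\ell^1$ term.
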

\begin{proof}
    The boundedness, continuity, and weak coercivity of the operators \( (A, B) \) follow directly from the regularity of \( (a, b) \) in combination with standard estimates. Furthermore, using similar arguments as in \cite[Section~4.1]{azmi2019hybrid}, one can show that \( g \) satisfies \cref{ass:running_cost}. The assertion in \cref{ass:1} follows by analogous reasoning to that in \cite[Proposition~6.1]{azmi2019hybrid}. Note that the argumentation presented there applies to all \( g \) satisfying \( g(\bv) \leq C \norm{\bv}{2}^2 \) for \( \bv \in U \), where \( C > 0 \) is a constant independent of  \({\bv}, T \) and \( \tint \).
\end{proof}
\section{Numerical experiments}\label{sec:NUMEXP}
In this section, we compare \cref{algo:FOMRHC} (FOM-RHC) and \cref{algo:ROMRHC} (ROM-RHC) in terms of closed-loop stability and computational performance. 
\subsection{Algorithmic setup and discretization}\label{subsec:NUMEXP_setup}
For the example from \cref{sec:example}, we choose the problem parameters listed in \Cref{tab:parameters} together with
\begin{equation*}
    a(t,\bx) \coloneqq -2-0.8\,|\sin(t)|,\quad  b(\bx)\coloneqq (-0.01(x_1+x_2),0.2x_1x_2)^\top
\end{equation*}
for $t\in(0,\infty)$ and $\bx=(x_1,x_2)\in\Omega$. To obtain a numerical model, we consider a discretization in space using piecewise linear finite elements with \( 3721 \) degrees of freedom. We fix the final time to \( T_\infty = 10 \). For the discretization in time, we apply an implicit Euler scheme using \( K = 801 \) time points with step size \( \tau = \nicefrac{T_\infty}{(K-1)} = 0.0125 \). The sampling time is chosen as \( \delta = {0.25} \), while \( T \) and \( \tilde \alpha \) are varied in the experiments below.
For the model reduction, we employ POD \eqref{eqn:PODminimization} {together with the strategy outlined in \cref{rem:pod_update} for the choice of snapshots and basis size. The corresponding tolerance for the contained energy was chosen tightly as \( \varepsilon_{\tiny \mathsf{POD}} = 1 - 10^{-13} \), which mimics using the maximal POD rank. As snapshots for basis construction at step $k$, we use the optimal state and adjoint state at time step $k$ together with previously used snapshots.} Note that, for the above choice of \( a \) and \( b \), time and space variables are affinely separable. Consequently, an offline-online decomposition of the error estimator is considered (cf.~\cite{hesthaven2016certified}). {However, if $a,b$ are not affinely separable, one can either use hyper-reduction methods such as DEIM or consider computing the error estimators directly, which, in turn, would need FOM solves.} To solve the non-smooth finite-horizon control problems, we use the Barzilai–Borwein proximal gradient method from \cite{azmi2025forward} using both absolute and relative tolerances of \( 10^{-13} \). We consider \( m = 13 \) square actuators, each with area \( 0.0106 \), arranged in an L-shape as illustrated in \Cref{fig:ErrorEstOL}. The total actuator area thus covers approximately \( 14\% \) of the domain.
\begin{table}[ht!]
    \centering
    \caption{Problem parameters for the numerical setup.}
    \label{tab:parameters}
    \begin{tabular}{rcccccccc}
        \toprule
        \textbf{Parameter} &$\Omega$& $m$  & $\nu$ & $y_0$ & $\lambda$ & $\beta$ \\\midrule
        \textbf{value} & $(0,1)^2$& $13$ & 0.1 & $3\sin(\pi x_1)\sin(\pi x_2)$ & $10^{-3}$ & $10^{-4}$\\\bottomrule
    \end{tabular}
\end{table}
\subsection{Error estimation for the open-loop problem}\label{subsec:openloop}
First, we investigate the error estimators introduced in \cref{sec:ROM} to numerically verify \cref{cor:errorest_optimalval_diff_initguess} and \cref{lem:errorest_optimalval_diff_initguess_prepare} for the first open-loop problem \eqref{eqn:fin_hor_ocp} for $(\tint,\yint)=(0,y_0)$. In \Cref{fig:ErrorEstOL}, we plot the decay of the value function estimators $\Delta_{V_T}$ and the true error $e_{V_T}\coloneqq|V_T(0,y_0)-V_T^r(0,y_0)|$ as a function of the reduced basis size $r$, where the reduced space is constructed by POD based on snapshots of the FOM optimal state and adjoint for different $L^2$-regularization parameters $\lambda \in \{1, 10^{-3} \}$ and $T=1$. An exponential decay (with the same rate) in $r$ is observed for all quantities, along with a consistent overestimation of the error. This overestimation is more significant for smaller $L^2$-regularizations due to the scaling factors $\nicefrac{1}{\lambda}$ appearing in \cref{eq:opt_cont_est} and \cref{eq:opt_out_est}. In \cref{fig:mpcExample1table}, we report the true error $e_{V_T}$, the error estimators $\Delta_{V_T}$ and their effectivities defined as $\text{eff}(\Delta_{V_T})\coloneqq \nicefrac{e_{V_T}}{\Delta_{V_T}}$, for prediction horizons $T\in \{ 0.8,1,1.2\}$ and regularizations $\lambda \in \{1, 10^{-1}, 10^{-2}, 10^{-3} \}$. These results are obtained using a reduced space of dimension $r=30$ again constructed from the optimal state and adjoint corresponding to each pair of $(T,\lambda)$. The results show that the error estimator increases by approximately one order of magnitude for each increase in the prediction horizon \( T \) and for each decrease in the regularization parameter \( \lambda \). The increase in dependence of $T$ is due to the exponential terms $C_{1,y}(t), C_{2,y}(t)$  appearing in \eqref{eq:state_apost_CHL2V}, and $C_p$ in \eqref{eq:adjstate_apost_CHL2V}. This effect is only marginally reflected in the true error, leading to effectivities between \( 2.0\cdot 10^{-3} \) for $(T,\lambda)=(0.8,1)$ and \( 5.2 \cdot 10^{-8} \) for $(T,\lambda)=(1.2,10^{-3})$. 

Thus, for large prediction horizons $T$ and small $L^2$-regularization,  the rigorous error estimator $\Delta_{V_T}$ increasingly overestimates the true error. This behavior is consistent with observations in \cite{dietze2023reduced,karcher2018certified}. {Since the proposed estimator exhibits an exponential dependence on the horizon length \(T\), leading to large effectivities, it should be interpreted as a worst-case bound. Practically, the estimator remains meaningful for small to moderate horizons and problems with larger $L^2$-regularization. Since both the true error and the estimator decay exponentially with respect to the reduced basis dimension, the resulting
conservativeness can be mitigated by a linear increase of the basis size or, for large
horizons in the RHC framework, by choosing \(\tilde{\alpha}\) in \cref{theorem:StabilityReducedRHC} closer to zero. In the following, we restrict ourselves therefore up to moderate horizons.}
\begin{figure}[ht!]
	\centering
	\begin{subfigure}[t]{0.48\textwidth}
        \centering
        \includegraphics{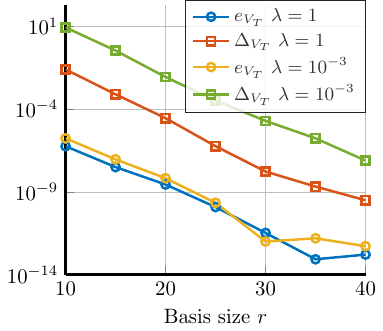}
	\end{subfigure}
 	\hfill
    \begin{subfigure}[t]{0.48\textwidth}
        \centering
    \begin{tikzpicture}[scale=5]

    \draw[thick] (0,0) rectangle (1,1);
    \node at (0.5,-0.051) {$\Omega=(0,1)^2$};
    
    \def\s{0.1}
    \def\yshift{0.1}
    \def\xshift{-0.01}
    
    \foreach \j [evaluate=\j as \k using int(7 - \j)] in {1,...,6} {
      \fill[blue!40] ({\xshift + 0.7}, {0.6 + \yshift - \j*\s}) rectangle ++(\s, \s);
      \node at ({\xshift + 0.7 + 0.5*\s}, {0.6 + \yshift - \j*\s + 0.5*\s}) {\tiny\sf $R_{\k}$};
    }
    
    \foreach \i [evaluate=\i as \k using int(13 - \i)] in {0,...,6} {
      \fill[blue!40] ({\xshift + 0.1 + \i*\s}, {0.6 + \yshift}) rectangle ++(\s, \s);
      \node at ({\xshift + 0.1 + \i*\s + 0.5*\s}, {0.6 + \yshift + 0.5*\s}) {\tiny\sf $R_{\k}$};
    }
    
    \end{tikzpicture}
        \label{fig:actuator_plot}
	\end{subfigure}
	\caption{Left: true error $e_{V_T}$, estimators $\Delta_{V_T}$, and effectivities $\text{eff}(\Delta_{V_T})$ for $T=1$, plotted against basis size $r$ for the first open-loop problem. Right: actuator setup.}
	\label{fig:ErrorEstOL}
\end{figure}
\begin{table}[ht!] 
    \scriptsize
	\centering 
    \caption{Error estimation for the value function of the first open-loop problem {for $r = 30$}.}
	\label{fig:mpcExample1table}

 \begin{tabular}{c*{9}{c}}

    \begin{tabular}{c*{9}{c}}
    & \multicolumn{3}{c}{$T = 0.8$} 
    & \multicolumn{3}{c}{$T = 1$} 
    & \multicolumn{3}{c}{$T = 1.2$}  \\
    \toprule
    $\lambda$ & $e_{V_T}$ & $\Delta_{V_T}$ & eff. 
              & $e_{V_T}$ & $\Delta_{V_T}$ & eff. 
              & $e_{V_T}$ & $\Delta_{V_T}$ & eff. \\
    \midrule
    $10^{0}$   & $6.1e{-13}$ & $3.2e{-10}$ & $2.0e{-3}$ 
              & $2.6e{-12}$ & $2.2e{-9}$  & $1.2e{-3}$ 
              & $3.3e{-12}$ & $1.7e{-8}$  & $2.0e{-4}$ \\
    $10^{-1}$  & $1.3e{-12}$ & $1.5e{-9}$ & $9.1e{-4}$ 
              & $5.0e{-12}$ & $1.4e{-8}$  & $3.6e{-4}$ 
              & $1.1e{-11}$ & $1.0e{-7}$  & $1.0e{-4}$ \\
    $10^{-2}$  & $1.3e{-12}$ & $2.4e{-8}$ & $5.6e{-5}$ 
              & $2.7e{-13}$ & $2.8e{-7}$  & $9.8e{-7}$ 
              & $3.9e{-12}$ & $2.7e{-6}$  & $1.4e{-6}$ \\
    $10^{-3}$  & $4.6e{-12}$ & $2.0e{-7}$ & $2.3e{-5}$ 
              & $2.4e{-12}$ & $3.1e{-6}$  & $7.9e{-7}$ 
              & $1.0e{-12}$ & $1.9e{-5}$  & $5.2e{-8}$ \\
    \bottomrule
\end{tabular}
\end{tabular}
\end{table}
\subsection{Stability and suboptimality of the RHC algorithms}\label{subsec:numstab}
We now investigate the performance of the full-order and reduced-order RHC schemes. 
In \Cref{fig:Decay}, we depict the decay behavior of the reduced-order schemes over time for $(T,\tilde{\alpha}) \in \{ (0.8,0.35), (1.0,0.58), (1.2,0.73) \},$
where the minimal desired performance $\tilde{\alpha}$ is chosen such that $\tilde{\alpha} \leq \alpha$. Here, the FOM performance index $\alpha \coloneqq \min_k \alpha_k^{\text{\tiny FOM}}$ is defined by $\alpha_k^{\text{\tiny FOM}} \coloneqq \alpha(\bu_{rh}, t_k, y_{rh}(t_k)),$ see \eqref{eq:current_pfind}, for the output $\bu_{rh}^{\text{\tiny FOM}}, y_{rh}^{\text{\tiny FOM}}$ generated by the FOM-RHC scheme in \cref{algo:FOMRHC}. {Note that $\alpha$ as defined here is the numerically observed performance index and therefore different from the theoretical performance index \eqref{e3}, which we do not aim to compute here.} Further, we denote the output of the ROM-RHC scheme (cf.~\cref{algo:FOMRHC}) by $\bu_{rh}^{\text{\tiny ROM}}, y_{rh}^{\text{\tiny ROM}}$, using the performance estimators $\ubar{\alpha}_k$ (defined in \eqref{eq:ROMsuboptimality_suff}) or $\ubar{\alpha}_k^r$ (defined in \eqref{eq:ROMsuboptimality_suff_fullROM}). From \Cref{fig:Decay}, we observe that all RHC schemes exponentially stabilize the system, as predicted by \cref{theorem:stabilityRHC,theorem:StabilityReducedRHC}. Moreover, a larger prediction horizon $T$ yields a higher FOM performance $\alpha$ (and thus potentially allows for a larger choice of $\tilde{\alpha}$), which in turn results in a faster exponential decay of both the state and the cost. The differences between the ROM and FOM methods are notably small. This is also reflected in \Cref{fig:RHC_error_comp}, where we compare the schemes in terms of the approximation quality for the cost over the entire time horizon,
$ J^{\star}_{T_\infty} \coloneqq J_{T_\infty}(\bu^{\star}_{rh})$ for $ \star \in \{\text{FOM}, \text{ROM}\},$ as well as in terms of the corresponding relative errors
\begin{equation}\nonumber 
   e_{J_{T_\infty}} \coloneqq \frac{|J^{\text{\tiny ROM}}_{T_\infty}- J^{\text{\tiny FOM}}_{T_\infty}|}{J^{\text{\tiny FOM}}_{T_\infty}}, \quad e_{y_{rh}}\coloneqq  \frac{\norm{y_{rh}^{\text{\tiny ROM}}- y_{rh}^{\text{\tiny FOM}}}{L^2(0,T_\infty;H)}}{\norm{y_{rh}^{\text{\tiny FOM}}}{L^2(0,T_\infty;H)}}, \quad e_{\bu_{rh}}\coloneqq  \frac{\norm{\bu_{rh}^{\text{\tiny ROM}}- \bu_{rh}^{\text{\tiny FOM}}}{\mU_{T_\infty}}}{\norm{\bu_{rh}^{\text{\tiny FOM}}}{\mU_{T_\infty}}}.
\end{equation}
Furthermore, we present the results of the ROM-RHC scheme using the performance estimators $\ubar{\alpha}_k$ and $\ubar{\alpha}_k^r$. The corresponding relative error in the performance index is defined as $e^{(r)}_{\alpha} \coloneqq \nicefrac{\left| \min_k\ubar{\alpha}^{(r)}_k - \alpha \right|}{\alpha}.$ Across all test cases, the relative error consistently remains in the range of $10^{-5}$ to $10^{-8}$, indicating that the RHC schemes behave almost identically in all considered scenarios. In \Cref{fig:RHC_perfromacne}, we compare the computational cost of the RHC schemes. The reduced variants achieve a speed-up of approximately $10$-$13$ in computation time compared to the full-order RHC, corresponding to a reduction in the total number of FOM gradient evaluations by a factor of about $20$. For the reduced methods, FOM gradient evaluations are needed only if a model update is triggered, to compute the full optimal state and adjoint at the current time step $t_k$, which serve as snapshots for updating the new ROM (cf.~\cref{lem:remark_alpha_convergence}). \Cref{fig:RHC_perfromacne} reveals slightly higher speed-ups for the estimator $\ubar{\alpha}_k$ compared to $\ubar{\alpha}_k^r$. This is because, for $\ubar{\alpha}_k$, the optimization result from time step $t_{k+1}$ can be cached when computing the performance index (see \eqref{eq:ROMsuboptimality_suff}). Such caching is not possible for $\ubar{\alpha}_k^r$, because the initial value of the value function at time $t_{k+1}$ differs (see \eqref{eq:ROMsuboptimality_suff_fullROM}). In this case, however, the optimization result at $t_{k+1}$ is still used as a warm start for the next optimization, leading only to a slightly increased computation time. 
Both ROM-RHC variants show comparable behavior w.r.t. the resulting reduced basis size $r$ and the number of model updates. In all cases, the model updates are completed within the first three iterations. 

\Cref{fig:Perf} illustrates the evolution of the performance indices throughout the RHC iterations $k$ for the choice $(T,\tilde{\alpha}) = (0.8, 0.35)$. One observes in the right plot that the performance estimator $\ubar{\alpha}_k^r$ has a larger overestimation than $\ubar{\alpha}_k$, due to the additional terms depending on the initial value error in \cref{cor:errorest_optimalval_diff_initguess} and the cost function error estimate in the denominator in \eqref{eq:ROMsuboptimality_suff_fullROM}. {Eventhough the numerical experiments suggest that $\ubar \alpha_k$ is more efficient to use, we note that, combined with an offline-online decomposition, however, the verification using $\ubar \alpha_k^r$ becomes entirely independent of the full-order model, which can be advantageous for very large or frequently updated problems. Finally, the performance index $\ubar \alpha_k^r$ also offers theoretical benefits, as it enables simultaneous stabilization of the full- and reduced-order model
(see \cref{rem:perfrind}).} After the second model update, all performance indices approximately match the FOM performance index. 

{In \Cref{fig:mpcest}, we plot the error estimators for the optimal state, control, and adjoint (defined in \eqref{eq:opt_cont_est} and \eqref{eq:optimal_error_ests1}) together with the corresponding true errors as functions of the RHC iteration $k$, for both performance index variants.
For visualization purposes, the refined error estimators (and true errors) are not plotted at model update steps, since they are set to zero whenever the FOM optimal control is available. Similar to the value function estimator discussed in \cref{subsec:openloop}, one observes a pronounced initial overestimation and a rapid increase of both the error estimators and the true error. At $k=2$, model refinement is triggered, leading to a reduction of both quantities by approximately two orders of magnitude. During subsequent RHC iterations, as the controlled state decays to zero, the true error also converges to zero. This asymptotic behavior is accurately captured by the error estimators, which likewise decay.
}

Moreover, the sparsity pattern of the optimal control is accurately captured by the reduced-order methods, and in all cases, the actuators $R_1$ and $R_{13}$ remain inactive. 
Overall, for this example, the ROM-RHC schemes achieve low errors while providing substantial savings in computational cost.
\begin{figure}[htbp]
	\centering
	\begin{subfigure}[t]{0.48\textwidth}
        \centering
	    \includegraphics{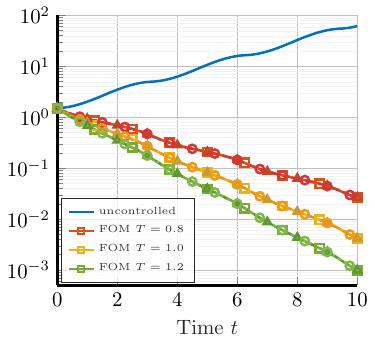}
        \label{fig:Decay_sub1}
	\end{subfigure}
 	\hfill
    \begin{subfigure}[t]{0.48\textwidth}
        \centering
	    \includegraphics{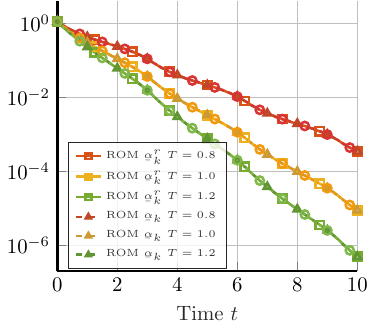}
        \label{fig:Decay_sub2}
	\end{subfigure}
	\caption{Exponential decay of the state norm $|y_{rh}^\star(t)|_H$ (left) and the cost $\ell(y_{rh}^\star(t),\bu_{rh}^\star(t))$ (right) for $\star \in \{\text{FOM}, \text{ROM}\} $ for the different performance estimators.}
	\label{fig:Decay}
\end{figure}
%
\begin{table}[ht!] 
    \scriptsize
	\centering 
    \caption{Error comparison of the RHC schemes.}
	\label{fig:RHC_error_comp}
	  \begin{tabular}{lcccccccc}\toprule
		$(T,\tilde \alpha)=(0.8,0.35)$ & $J^\star_{T_\infty}$ & $|y^\star_{rh}(T_\infty)|_H$ & $e_{J_{T_\infty}}$  & $e_{\bu_{rh}}$ & $e_{y_{rh}}$   & min/avg/max($\ubar\alpha_k$) & $e^{(r)}_\alpha$ \\ 
        \midrule
		FOM-RHC & 2.01  & $2.2e{-2}$  &  - & - &- & $0.350/0.432/0.535$ & -\\
        ROM-RHC $\ubar \alpha_k^{r}$   &2.01 & $2.2e{-2}$ & $1.0e{-7}$ & $1.5e{-6}$ &$1.8e{-7}$&$0.350/0.431/0.535$& $3.4e{-7}$ \\
        ROM-RHC $\ubar \alpha_k$   &2.01 & $2.2e{-2}$ & $1.0e{-7}$ & $1.5e{-6}$ & $1.8e{-7}$ &$ 0.350/0.432/0.535$& $9.7e{-7}$\\
		\midrule
        $(T,\tilde \alpha)=(1.0,0.58)$ & $J^\star_{T_\infty}$ & $|y^\star_{rh}(T_\infty)|_H$ & $e_{J_{T_\infty}}$  & $e_{\bu_{rh}}$ & $e_{y_{rh}}$   & min/avg/max($\ubar\alpha_k$) & $e^{(r)}_\alpha$ \\  
        \midrule
		FOM-RHC & 1.60  & $3.3e{-3}$  &  - & - &- & $0.584/0.630/0.689$ &-\\
        ROM-RHC $\ubar \alpha_k^{r}$   &1.60 & $3.3e{-3}$ & $6.0e{-8}$ & $6.6e{-7}$ & $1.6e{-7}$ &$0.584/0.630/0.689$&$5.4e{-5}$\\
        ROM-RHC $\ubar \alpha_k$   &1.60  & $3.3e{-3}$ & $5.4e{-8}$ & $6.2e{-7}$ &$1.6e{-7}$&$0.584/0.630/0.689$&$7.0e{-7}$\\
        \midrule
        $(T,\tilde \alpha)=(1.2,0.73)$  & $J^\star_{T_\infty}$ & $|y^\star_{rh}(T_\infty)|_H$ & $e_{J_{T_\infty}}$  & $e_{\bu_{rh}}$ & $e_{y_{rh}}$   & min/avg/max($\ubar\alpha_k$) & $e^{(r)}_\alpha$ \\  
        \midrule
		FOM-RHC & 1.46  & $8.6e{-4}$  &  - & - &- & $0.736/0.765/0.799$ &\\
        ROM-RHC $\ubar \alpha_k^{r}$   &1.46  &  $8.6e{-4}$& $7.2e{-8}$ & $6.4e{-7}$ & $4.6e{-7}$ & $0.736/0.764/0.764$&$5.2e{-7}$\\
        ROM-RHC $\ubar \alpha_k$ &1.46  & $8.6e{-4}$ & $7.3e{-8}$ & $6.4e{-7}$ &$4.6e{-7}$& $0.736/0.764/0.799$&$5.1e{-7}$\\
		\bottomrule
	\end{tabular}
    
\end{table}

\begin{table}[ht!] 
    \scriptsize
	\centering 
    \caption{Performance comparison of the RHC schemes. {Basis sizes $r$ are listed in update order.}}
	\label{fig:RHC_perfromacne}
	\begin{tabular}{lccccc}\toprule
		$(T,\tilde \alpha)=(0.8,0.35)$  & $\#$FOM gradient eval. & CPU time [s] & speed-up & \#ROM updates &  $r$  \\ 
        \midrule
		FOM-RHC & 476  & 638  &  - & - & - \\
        ROM-RHC $\ubar \alpha_k^{r}$  &24 & 62 & 10 & 2 &{41/60}\\
        ROM-RHC $\ubar \alpha_k$  &24 & 55 & 11 & 2 &{41/60}\\
		 \midrule
        $(T,\tilde \alpha)=(1.0,0.58)$  & $\#$FOM gradient eval. & CPU time [s] & speed-up & \#ROM updates &  $r$  \\ 
        \midrule
		FOM-RHC & 558  & 907  &  - & - & - \\
        ROM-RHC $\ubar \alpha_k^{r}$  &24 & 75 & 12 & 2 &{43/59}\\
        ROM-RHC $\ubar \alpha_k$  &24 & 67 & 13 & 2 &{43/60}\\
         \midrule
        $(T,\tilde \alpha)=(1.2,0.73)$  & $\#$FOM gradient eval. & CPU time [s] & speed-up & \#ROM updates &  $r$  \\ 
        \midrule
		FOM-RHC & 648  & 1244  &  - & - & - \\
        ROM-RHC $\ubar \alpha_k^{r}$  &31 & 103 & 11 & 2 &{44/60}\\
        ROM-RHC $\ubar \alpha_k$ &31 & 99 & 12 & 2 &{44/60}\\
		\bottomrule
	\end{tabular}
\end{table}
\begin{figure}[htbp]
	\centering
	\begin{subfigure}[t]{0.48\textwidth}
        \centering
	    \includegraphics{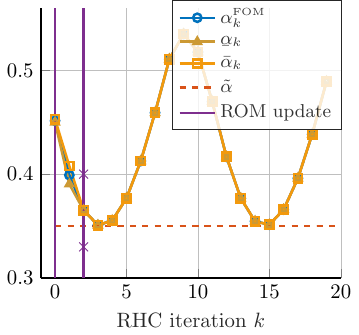}
        \label{fig:Perf_sub1}
	\end{subfigure}
 	\hfill
    \begin{subfigure}[t]{0.48\textwidth}
        \centering%
	    \includegraphics{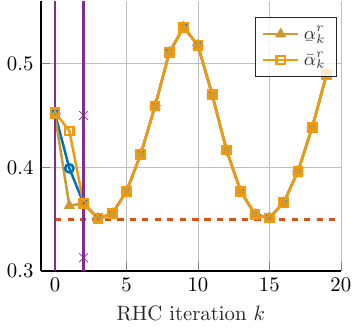}
        \label{fig:Perf_sub2}
	\end{subfigure}
	\caption{Performance index with lower and upper bounds for $(T,\tilde \alpha)=(0.8,0.35)$. {The purple 'x' shows the performance bounds before model refinement.}}
	\label{fig:Perf}
\end{figure}
\begin{figure}[htbp]
	\centering
	\begin{subfigure}[t]{0.48\textwidth}
        \centering
        \includegraphics{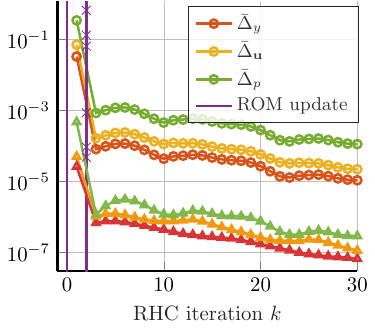}
	\end{subfigure}
 	\hfill
    \begin{subfigure}[t]{0.48\textwidth}
        \centering%
        \includegraphics{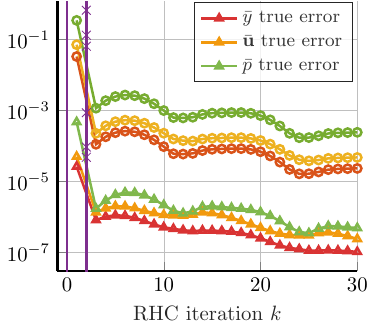}

	\end{subfigure}
	\caption{{State, control, and adjoint state estimates together with their true error for $T=0.8$. Left: ROM-RHC with $ \bar \alpha_k$. Right: ROM-RHC with $ \bar \alpha_k^{r}$. The purple 'x' shows the quantities before model refinement.}}
	\label{fig:mpcest}
\end{figure}
\section{Conclusion}\label{sec:conclusion}
We proved that continuous-time RHC can achieve exponential stability and suboptimality for linear time-varying parabolic equations within a relaxed dynamic programming framework. Using Galerkin reduced-order models and rigorous a posteriori error estimates for the value function, we designed reduced-order controllers that stabilize the full-order system, with convergence guarantees and validated performance in numerical tests involving finite actuator configurations and squared $\ell_1$-regularization. Future work will address extensions to nonlinear parabolic systems, systems with parametric or dynamic uncertainties, {and adaptive choices of the horizon $T$.}
\section*{Acknowledgment} M.K. and S.V. acknowledge funding by the Federal Ministry of Education and Research (grant no. 05M22VSA).
%
\bibliographystyle{siam}
\bibliography{biblio}
%
\appendix
%
\section{Error estimator equivalence and asymptotic convergence}\label{sec:APP_add_estimates}
%
\begin{lemma}[Error estimator equivalence in $\mY_T(\tint)$]\label{lemma:app_error_equivalence}
    Let \cref{ass:pde} hold and let $\tint\in\R_{\geq0}$, $T\in \R_{>0}$.
    \begin{enumerate}
        \item In the situation of \cref{Lemma:ResBasStateError}, let $y=y(\bu,\yint)\in \mY_T(\tint),\  y^r=y^r(\bu,\yint)\in \mY^r_T(\tint)$ be the solution of \eqref{eq:LTV} and \eqref{eq:reducedLTV}, respectively, for $\tildeyint=\yint\in H$, $\bu\in \mU_T(\tint)$. For $e_y = y-y^r$ it holds
     \begin{equation}\label{eq:state_errorest_equiv}
     c \Delta^2_{y}(0,0, y^r,\bu)\leq \norm{e_y(\tint+T)}{H}^2+\norm{e_y}{\mY_T(\tint)}^2\leq C\Delta^2_y(0,0,y^r, \bu),
    \end{equation}
    for constants $C,c>0$ independent of $(y,y^r,\bu, \yint,r)$.
    \item In the situation of \cref{lem:ResAdjest}, let $p=p(\tilde y)\in  \mY_T(\tint)$ and $p^r=p^r(\tilde y)\in \mY^r_T(\tint)$ be the solution of the FOM \eqref{eq:FOM_opsys} and ROM \eqref{eq:ROM_opsys} adjoint equation, respectively, for $\tilde y\in L^2(\tint,\tint+T;H)$. For $e_p = p-p^r$ it holds
    \begin{equation}\label{eq:adstate_errorest_equiv}
    \tilde c\Delta^2_{p}(0, p^r, \tilde y) \leq \norm{e_p(\tint)}{H}^2+\norm{e_p}{\mY_T(\tint)}^2\leq \tilde C\Delta^2_{p}(0, p^r, \tilde y),
    \end{equation}
    for constants $\tilde C,\tilde c>0$ independent of $(p,p^r,\tilde y,r)$.
    \end{enumerate}
\end{lemma}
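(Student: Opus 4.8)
The plan is to reduce both estimators to a pure residual quantity (plus, for the state, an initial-projection term) and then to pass back and forth between that residual and the full $\mY_T(\tint)$-norm of the error by means of the error equations underlying \cref{Lemma:ResBasStateError,lem:ResAdjest}. I first record that, since $\bu=\bu^r$ forces $\Delta_\bu=0$ and $\tildeyint=\yint$ forces $\Delta_{\yint}=0$, the state estimator collapses to
\begin{equation*}
\Delta_y^2(0,0,y^r,\bu)=C_{1,y}\norm{e^{-\eta_H(\cdot-\tint)}\Res_y(y^r,\bu)}{L^2(\tint,\tint+T;V')}^2+C_{2,y}\norm{e_y(\tint)}{H}^2,
\end{equation*}
where I used $e_y(\tint)=\yint-\Pi^H_{V_r}\yint=\tildeyint-\Pi^H_{V_r}\tildeyint$. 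Because $e^{-\eta_H(t-\tint)}\in[e^{-\eta_H T},1]$ on $[\tint,\tint+T]$, the weighted and unweighted $L^2(V')$-residual norms are equivalent with constants depending only on $(\eta_H,T)$, so it suffices to prove the equivalence for the surrogate $\norm{\Res_y(y^r,\bu)}{L^2(\tint,\tint+T;V')}^2+\norm{e_y(\tint)}{H}^2$. The decisive observation is that the error equation \eqref{eq:error_eq} with $e_\bu=0$ reads $\partial_t e_y+A e_y=\Res_y(y^r,\bu)$, so the residual is exactly the $V'$-valued left-hand side of the error dynamics applied to $e_y$.

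For the \textbf{upper bound} (reliability), \cref{Lemma:ResBasStateError} already delivers $\norm{e_y(\tint+T)}{H}^2+\norm{e_y}{L^2(\tint,\tint+T;V)}^2\le\Delta_y^2$, so only the time-derivative contribution $\norm{\partial_t e_y}{L^2(\tint,\tint+T;V')}^2$ inside $\norm{e_y}{\mY_T(\tint)}^2$ remains. Writing $\partial_t e_y=-A e_y+\Res_y(y^r,\bu)$ and using $A\in L^\infty(\calL(V,V'))$ gives $\norm{\partial_t e_y}{L^2(V')}\le\norm{A}{L^\infty}\norm{e_y}{L^2(V)}+\norm{\Res_y(y^r,\bu)}{L^2(V')}$; both summands are controlled by a constant times $\Delta_y$ (the first by the line above, the second by the definition of $\Delta_y^2$), which yields the upper bound with $C=C(\eta_V,\eta_H,T,\norm{A}{L^\infty})$.

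For the \textbf{lower bound} (efficiency), I turn the identity around: $\norm{\Res_y(y^r,\bu)}{L^2(V')}\le\norm{\partial_t e_y}{L^2(V')}+\norm{A}{L^\infty}\norm{e_y}{L^2(V)}\lesssim\norm{e_y}{\mY_T(\tint)}$, while the embedding $\mY_T(\tint)\hookrightarrow C([\tint,\tint+T];H)$ (valid for $T<\infty$) controls $\norm{e_y(\tint)}{H}\lesssim\norm{e_y}{\mY_T(\tint)}$. Summing the squares and restoring the weight gives $c\,\Delta_y^2\le\norm{e_y}{\mY_T(\tint)}^2\le\norm{e_y(\tint+T)}{H}^2+\norm{e_y}{\mY_T(\tint)}^2$. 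The adjoint claim is identical in spirit: with $\tilde y$ common to both equations one has $\Delta_y=0$ in \cref{lem:ResAdjest}, so $\Delta_p^2(0,p^r,\tilde y)$ is a pure weighted residual term with no boundary contribution, the terminal condition $e_p(\tint+T)=0$ being met exactly; the adjoint error equation $-\partial_t e_p+A'e_p=\Res_p(\tilde y,p^r)$ plays the role of \eqref{eq:error_eq}, one uses $\norm{A'}{L^\infty}=\norm{A}{L^\infty}$, and the weight $e^{\eta_H(\cdot-\tint-T)}$ is again bounded between $e^{-\eta_H T}$ and $1$.

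The Young/triangle estimates are routine; the step that needs care is the \emph{efficiency} direction, where one must recover the full $V'$-norm of the residual — in particular its $\partial_t e$ component — from the $\mY_T(\tint)$-norm, and simultaneously verify that every constant ($C_{1,y}$, $C_{2,y}$, $C_p$, the exponential weight bounds, $\norm{A}{L^\infty}$, and the embedding constant) depends only on the fixed data $(\eta_V,\eta_H,T,\norm{A}{L^\infty})$ and never on the reduced dimension $r$, as claimed.
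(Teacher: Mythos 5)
Your proof is correct and takes essentially the same route as the paper's: the upper bound combines the a posteriori estimates of \cref{Lemma:ResBasStateError,lem:ResAdjest} with the error equation to control $\norm{\partial_t e_y}{L^2(V')}$ (resp.\ $\norm{\partial_t e_p}{L^2(V')}$), and the lower bound reverses the error equation to bound the residual by the $\mY_T(\tint)$-norm while separately controlling the initial-value term. Your only cosmetic deviations — invoking the embedding $\mY_T(\tint)\hookrightarrow C([\tint,\tint+T];H)$ where the paper re-derives it inline via the fundamental theorem of calculus and Young's inequality, taking $V'$-norms of the error identity directly instead of testing with $J_V^{-1}\dot e_y$ and $J_V^{-1}\Res_y$, and making the exponential-weight equivalence explicit (which the paper leaves implicit) — are harmless and, if anything, slightly more careful.
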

\begin{proof}
    We only prove \eqref{eq:state_errorest_equiv}, since \eqref{eq:adstate_errorest_equiv} follows analogously. Let $J_V:V\to V'$ denote the Riesz isomorphism. For the upper bound, we test the error equation \eqref{eq:error_eq} with $J_V^{-1}\dot e_y(t)$ for almost all $t\in (\tint,\tint+T)$ to obtain
    %
    \begin{equation}\label{eq:lemm:app_error_equivalence_1}
        \norm{\dot e_y}{L^2(V')}\leq \norm{\Res_y}{L^2(V')}+\norm{A}{}\norm{e_y}{L^2(V)}.
    \end{equation}
    for $\norm{A}{L^\infty}\coloneqq \norm{A}{L^\infty(\tint,\tint+T;\calL(V,V'))}$.
    From \eqref{eq:state_apost_CHL2V} for $\Delta_\bu=\Delta_{\yint}=0$, it follows
    \begin{align*}
        &\norm{e_y(\tint+T)}{H}^2+ \norm{e_y}{\mY_T(\tint)}^2 \stackrel{\text{\eqref{eq:state_apost_CHL2V}}}{\leq} C_{1,y}\norm{e_y(\tint)}{H}^2+C_{2,y}\norm{\Res_y}{L^2(V')}^2+\norm{\dot e_y}{L^2(V')}^2\\
        & \qquad\stackrel{\eqref{eq:lemm:app_error_equivalence_1}}{\leq}  C_{1,y}\norm{e_y(\tint)}{H}^2+C_{2,y}\norm{\Res_y}{L^2(V')}^2+ 2\norm{\Res_y}{L^2(V')}^2+2\norm{A}{L^\infty}^2\norm{e_y}{L^2(V)}^2\\
        &\qquad\stackrel{\text{\eqref{eq:state_apost_CHL2V}}}{\leq} \big(2\norm{A}{L^\infty}^2C_{1,y}+C_{1,y}\big)\norm{e_y(\tint)}{H}^2+ \big(2\norm{A}{L^\infty}^2C_{2,y}+2+C_{2,y})\norm{\Res_y}{L^2(V')}^2,
    \end{align*}
    which implies the upper bound in \eqref{eq:state_errorest_equiv}. For the lower bound, we test the error equation \eqref{eq:error_eq} with $J_V^{-1}\Res_y(t)$ for almost all $t\in (\tint,\tint+T)$ to obtain
    \begin{equation}\label{eq:lemm:app_error_equivalence_2}
        \norm{\Res_y}{L^2(V')}\leq \norm{A}{L^\infty} \norm{e_y}{L^2(V)}+\norm{\dot e_y}{L^2(V')}.
    \end{equation}
    On the other hand, we have
    \begin{align}
       \label{eq:lemm:app_error_equivalence_3}
        \begin{aligned}
            \norm{e_y(\tint)}{H}^2&=\norm{e_y(\tint+T)}{H}^2-2\int_{\tint}^{\tint+T}\langle \dot e_y, e_y\rangle_{V',V}\ \dt\\
            &\leq \norm{e_y(\tint+T)}{H}^2 + 2\norm{\dot e_y}{L^2(V')}^2+2\norm{ e_y}{L^2(V)}^2.
        \end{aligned}
    \end{align}
    Combining \eqref{eq:lemm:app_error_equivalence_2} and \eqref{eq:lemm:app_error_equivalence_3}, implies the lower bound in \eqref{eq:state_errorest_equiv} from
    \begin{equation*}
         \norm{e_y(\tint)}{H}^2+\norm{\Res_y}{L^2(V')}^2\leq \norm{e_y(\tint+T)}{H}^2 + 3\norm{\dot e_y}{L^2(V')}^2 + \big(2+\norm{A}{L^\infty}\big)\norm{e_y}{L^2(V)}. 
    \end{equation*}
\end{proof}
Next, we show that the errors \( e_y, e_p \) converge to zero in \( \mY_T(\tint) \) for an orthonormal Galerkin projection, as \( r \to \infty \), under the regularity assumption \( y, p \in \mY_T(\tint) \). This convergence, as well as convergence rates, were established in~\cite[Theorem~3.11]{GubV17}, for instance, under the stronger regularity assumption \( y, p \in H^1(\tint,\tint+T;V) \). In our setting, convergence in $\mY_T(\tint)$ is essential in order to ensure convergence of the residuals and, consequently, the decay of residual-based a posteriori error estimates.
\begin{theorem}[$\mY_T(\tint)$-convergence of the state and adjont state]\label{theo:APP_convergence}
In the situation of \ \cref{lemma:app_error_equivalence}, 
consider an orthonormal basis $(v_n)_{n\in \N}$ of $V$. We set $V_r\coloneqq \text{span}\{v_1,\ldots,v_r\}$. Then we have $\norm{ e_y}{\mY_T(\tint)}\to 0$, $\norm{ e_p}{\mY_T(\tint)}\to 0$ as $r\to \infty$. 
%
\end{theorem}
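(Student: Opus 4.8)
The plan is to run a semidiscrete Galerkin error analysis built on the \emph{$H$-orthogonal} ($L^2$-) projection $\Pi^H_{V_r}$ rather than the $V$-orthogonal one, because the former is exactly what makes the weak temporal regularity $y\in\mY_T(\tint)$ (i.e. $\ddt y\in L^2(V')$ only, not $L^2(V)$) suffice. I treat $e_y=y-y^r$; the adjoint $e_p$ is identical after reversing time, with the terminal condition $p^r(\tint+T)=0$ playing the role of the exact datum. Subtracting \eqref{eq:reducedLTV} from \eqref{eq:LTV} and testing against $V_r$ gives the Galerkin orthogonality $\langle\ddt e_y,v\rangle_{V',V}+\langle A(t)e_y,v\rangle_{V',V}=0$ for all $v\in V_r$, together with $e_y(\tint)=\yint-\Pi^H_{V_r}\yint$.

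\textbf{Splitting and energy estimate.} I write $e_y=\theta-\rho$ with $\rho:=y-\Pi^H_{V_r}y$ and $\theta:=y^r-\Pi^H_{V_r}y\in\mY^r_T(\tint)$. The decisive point is that $\langle\ddt\rho,v\rangle_{V',V}=0$ for every $v\in V_r$: since $\Pi^H_{V_r}$ is self-adjoint in $H$ and fixes $V_r$, one has $\tfrac{\ed}{\dt}\langle\Pi^H_{V_r}y,v\rangle_H=\tfrac{\ed}{\dt}\langle y,v\rangle_H=\langle\ddt y,v\rangle_{V',V}$ for $v\in V_r$, so the projection error carries no admissible time-derivative contribution. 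Hence $\theta$ solves $\langle\ddt\theta,v\rangle+\langle A\theta,v\rangle=\langle A\rho,v\rangle$ for $v\in V_r$, with $\theta(\tint)=\Pi^H_{V_r}\yint-\Pi^H_{V_r}\yint=0$. Testing with $v=\theta$, using weak coercivity \eqref{eq:weak_coercivity}, Young's inequality and Grönwall's lemma yields $\norm{\theta}{C([\tint,\tint+T];H)}^2+\norm{\theta}{L^2(V)}^2\le C\norm{\rho}{L^2(V)}^2$ with $C$ independent of $r$. By the triangle inequality, the $L^2(V)$- and $C(H)$-parts of $e_y$ are thus controlled by $\norm{\rho}{L^2(V)}=\norm{(I-\Pi^H_{V_r})y}{L^2(V)}$.

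\textbf{Time-derivative bound.} It remains to estimate $\ddt e_y=\ddt\theta-\ddt\rho$ in $L^2(V')$. Since $\ddt\theta\in L^2(V_r)$, the error equation gives, for $v\in V$, $\langle\ddt\theta,v\rangle_{V',V}=\langle\ddt\theta,\Pi^H_{V_r}v\rangle_H=\langle A(\rho-\theta),\Pi^H_{V_r}v\rangle_{V',V}$, whence $\norm{\ddt\theta}{V'}\le\norm{A}{L^\infty}(\norm{\rho}{V}+\norm{\theta}{V})\,\norm{\Pi^H_{V_r}}{\mathcal L(V,V)}$. For $\ddt\rho$, extending $\Pi^H_{V_r}$ to $V'$ and using the same self-adjointness shows $\langle\ddt\rho,v\rangle_{V',V}=\langle\ddt y,(I-\Pi^H_{V_r})v\rangle_{V',V}$, and for data in the dense subspace $H\subset V'$ one gets $\norm{\ddt\rho(t)}{V'}\le c\,\norm{(I-\Pi^H_{V_r})g}{H}$. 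Together with the previous paragraph, this reduces the whole statement to two facts about the nested spaces $V_r=\spn\{v_1,\ldots,v_r\}$: (i) uniform boundedness $\sup_r\norm{\Pi^H_{V_r}}{\mathcal L(V,V)}<\infty$, and (ii) $\Pi^H_{V_r}\to\mathrm{Id}$ strongly on $V$.

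\textbf{Conclusion and main obstacle.} Granting (i)--(ii), the projection errors $\norm{(I-\Pi^H_{V_r})y(t)}{V}$ and $\norm{(I-\Pi^H_{V_r})\ddt y(t)}{V'}$ tend to $0$ pointwise in $t$ and are dominated by $\norm{y(t)}{V}^2$, resp.\ $\norm{\ddt y(t)}{V'}^2$; dominated convergence then gives $\norm{\rho}{L^2(V)}\to0$ and $\norm{\ddt\rho}{L^2(V')}\to0$, and with the $\theta$-estimate this proves $\norm{e_y}{\mY_T(\tint)}\to0$, and symmetrically $\norm{e_p}{\mY_T(\tint)}\to0$. The hard part is establishing (i), the uniform $V$-stability of the $H$-orthogonal projection onto the $V_r$: this is the single ingredient that upgrades the classical $C(H)\cap L^2(V)$ Galerkin convergence to convergence in the graph norm $\mY_T(\tint)$, and it is precisely where the relaxation from $H^1(V)$ to $W(V)$-regularity is felt (under $H^1(V)$ one would instead use the $V$-orthogonal projection and never face the $V'$-control of $\ddt\theta$). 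Property (ii) then follows from (i) and the density of $\bigcup_r V_r$ in $V$. Finally, feeding $\norm{e_y}{\mY_T(\tint)},\norm{e_p}{\mY_T(\tint)}\to0$ into the equivalence of \cref{lemma:app_error_equivalence} makes the residuals, and hence the residual-based a posteriori estimators, vanish as $r\to\infty$, which is the use made of this theorem in \cref{lem:alpha_convergence}.
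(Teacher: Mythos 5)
Your splitting $e_y=\rho-\theta$ with $\rho=(I-\Pi^H_{V_r})y$, $\theta=y^r-\Pi^H_{V_r}y$ is classical, and the parts you carry out are sound: the Galerkin orthogonality, the identity $\langle\ddt\rho(t),v\rangle_{V',V}=0$ for $v\in V_r$ (correct, by $H$-self-adjointness of $\Pi^H_{V_r}$), the $\theta$-equation with forcing $A\rho$ and zero initial value, and the energy/Gr\"onwall bound. But the proof never closes: every conclusion is conditional on your facts (i) $\sup_r\norm{\Pi^H_{V_r}}{\mathcal L(V,V)}<\infty$ and (ii) $\Pi^H_{V_r}\to\mathrm{Id}$ strongly in $V$, which you yourself label ``the hard part'' and do not prove. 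This is not a fillable technicality: in the generality of the theorem -- an \emph{arbitrary} $V$-orthonormal basis $(v_n)$ -- statement (i) is false, and then (ii) fails as well by the uniform boundedness principle. Sketch of a counterexample: let $(e_n)$ be an $H$-orthonormal basis with $\norm{e_n}{V}=n$ (diagonal Gelfand triple), partition $\N$ into pairs $\{n_k,m_k\}$ with $m_k/n_k\to\infty$, and in each two-dimensional block $\mathrm{span}\{e_{n_k},e_{m_k}\}$ take the rotated $V$-orthonormal pair whose first element is $w_k\approx\tfrac1{m_k}(e_{n_k}+e_{m_k})$; ordering these blockwise gives a $V$-orthonormal basis of $V$. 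Since the blocks are orthogonal in both $H$ and $V$, the $H$-projection onto a span containing complete blocks plus the single vector $w_k$ acts blockwise, and on the $k$-th block a direct computation gives $\norm{\Pi^H_{V_r}z_k}{V}\approx\tfrac{m_k}{2n_k}\norm{z_k}{V}$ for a suitable $z_k$, so $\norm{\Pi^H_{V_r}}{\mathcal L(V,V)}\to\infty$ along a subsequence of $r$. Your bounds for $\norm{\rho}{L^2(\tint,\tint+T;V)}$, $\norm{\ddt\rho}{L^2(\tint,\tint+T;V')}$ and $\norm{\ddt\theta}{L^2(\tint,\tint+T;V')}$ all carry exactly this factor (or require $\Pi^H_{V_r}y(t)\to y(t)$ in $V$), so the argument collapses at its load-bearing step.

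The paper avoids this issue by splitting with the $V$-orthogonal projection instead: $e_1=\Pi^V_{V_r}y-y$ and $e_2=y^r-\Pi^V_{V_r}y$. For $\Pi^V_{V_r}$ the two properties you need are free consequences of the hypothesis that $(v_n)$ is orthonormal in $V$: $\norm{\Pi^V_{V_r}}{\mathcal L(V,V)}=1$ and $\Pi^V_{V_r}\to\mathrm{Id}$ strongly on $V$, whence $e_1\to0$ (pointwise plus dominated convergence); the discrete part $e_2$ is treated by energy/Gr\"onwall as in your proposal, and its time derivative by the duality identity $\norm{\dot e_2}{L^2(V')}=\norm{\dot e_2}{L^2(V_r')}$ combined with the error equation tested against $V_r$. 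The $H$-projection enters the paper's proof only through the initial datum, via $\norm{\Pi^H_{V_r}\yint-\yint}{H}\to0$, where nonexpansiveness of $\Pi^H_{V_r}$ in $H$ and density of $\bigcup_rV_r$ in $H$ suffice; no $V$-stability of $\Pi^H_{V_r}$ is ever invoked. To salvage your route you would have to add a structural hypothesis that guarantees (i) -- for example $V_r$ spanned by eigenfunctions of the Riesz operator between $H$ and $V$, in which case $\Pi^H_{V_r}=\Pi^V_{V_r}$ -- or simply switch the splitting to the $V$-orthogonal projection as the paper does.
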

\begin{proof}
    We show $\norm{e_y}{\mY_T(\tint)}\to 0$, as the claim for $e_p$ follows by similar arguments. 
    Consider the orthogonal projection operator $\Pi^V_{V_r}:V\to V_r$ defined below \eqref{eqn:PODminimization}. We decompose the error according to $ y^r-y = y^r-\Pi^V_{V_r} y+\Pi^V_{V_r} y-y\eqqcolon e_2 + e_1$. Consider $e_1$ first. It holds $\ddt(\Pi^V_{V_r} y)= \dot y \circ \Pi^V_{V_r} $ (since $\Pi^V_{V_r}$ is self-adjoint as an orthogonal projection) and therefore
    \begin{align*}
        \norm{e_1}{\mY(\tint)}^2=&\norm{\Pi^V_{V_r} y-y}{L^2(V)}^2+\norm{\dot y \circ \Pi^V_{V_r} -\dot y}{L^2(V')}^2\\
        \leq &  \norm{\Pi^V_{V_r} y-y}{L^2(V)}^2+\norm{\dot y}{L^2(V')}^2\sup_{v\in L^2(V), \norm{v}{L^2(V)}=1}\norm{\Pi^V_{V_r}v-v}{L^2(V)}^2\to 0 \ (r\to \infty).
    \end{align*}
    Now we turn to $e_2$. It holds for $v\in V_r$ and almost all $t\in (\tint,\tint+T)$
    \begin{align}\label{eq:theo:galerkin_conv_help0}
        \langle\dot e_2(t),v\rangle_{V',V}+\langle A(t)e_2(t), v\rangle_{V',V}=& \langle-\dot e_1(t),v\rangle_{V',V}+\langle -A(t)e_1(t), v\rangle_{V',V}\\ \nonumber
        \leq & \norm{\dot e_1(t)}{V'}\norm{v}{V}+\norm{A}{L^\infty}\norm{e_1(t)}{V}\norm{v}{V}
    \end{align}
    By choosing $v=e_2(t)$ and using the weak coercivity of the operator $A$ as stated in \eqref{eq:weak_coercivity}, along with Young's inequality, we arrive at
    \begin{align}\label{eq:theo:galerkin_conv_help1}
         \tfrac{1}{2}\norm{\dot e_2(t)}{H}^2+\tfrac{\eta_V}{2}\norm{e_2(t)}{V}^2
        \leq & \tfrac{1}{\eta_V}\norm{\dot e_1(t)}{V'}^2+\tfrac{\norm{A}{L^\infty}}{\eta_V} \norm{e_1(t)}{V}^2+\eta_H\norm{e_2(t)}{H}^2
    \end{align}
    By applying Gronwall's Lemma and integration over $(\tint,\tint+T)$ it follows that
    %
    \begin{align}\label{eq:theo:galerkin_conv_help2}
        \norm{e_2}{L^2(V)}^2\leq c\norm{\dot e_1}{\mY_T(\tint)}^2+c\norm{e_2(\tint)}{H}^2.
    \end{align}
    for a generic constant $c=c(T)$.
    For $e_2(\tint)$, we obtain, using $\mY_T(\tint)\hookrightarrow C([\tint,\tint+T];H)$, that
    \begin{align*}
      \norm{e_2(\tint)}{H}^2=&\norm{\big(y^r-y+y-\Pi^V_{V_r} y\big)(\tint)}{H}^2\leq  \norm{y^r(\tint)-\yint}{H}^2+ c \norm{y-\Pi^V_{V_r} y}{\mY_T(\tint)}^2\\
      = & \norm{\Pi^H_{V_r} \yint-\yint}{H}^2+c \norm{e_1}{\mY_T(\tint)}^2 \to 0 \ (r\to \infty).
    \end{align*}
    Here we used the fact, that $\Pi^H_{V_r}\yint-\yint \to 0 \ (r\to\infty)$, since $V\subset H$ dense.
    Hence, \eqref{eq:theo:galerkin_conv_help2} implies $e_2\to 0$ in $L^2(V)$ as $r\to\infty$. Next, we show $\norm{\dot e_2}{L^2(V')}=\norm{\dot e_2}{L^2(V_r')}$. We have $y^r\in H^1(V_r)$ and hence $\dot y^r(t)\in V_r\subset V\subset V'\subset V_r'$ for almost all $t\in (\tint,\tint+T)$. Therefore, we conclude the result by applying Riesz's representation theorem
    \begin{equation}\label{eq:hh0}
        \norm{\dot y^r(t)}{V}= \norm{\dot y^r(t)}{V_r}=\norm{\dot y^r(t)}{V_r'}\leq \norm{\dot y^r(t)}{V'}=\norm{\dot y^r(t)}{V}.
    \end{equation}
    Further, we have using $\norm{\Pi^V_{V_r}v }{V}\leq \norm{v}{V}$ for all $v\in V$ and $\Pi_{V_r}^V\circ \Pi_{V_r}^V= \Pi_{V_r}^V $
    \begin{align}\nonumber
       \norm{\dot y(t)\circ  \Pi_{V_r}^V}{V_r'} \leq \norm{\dot y(t)\circ  \Pi_{V_r}^V}{V'}=& \sup_{ v\in V\setminus\{0\}}\tfrac{| \langle \dot y(t), \Pi_{V_r}^Vv\rangle_{V',V}|}{\norm{v}{V}}\leq\sup_{ v\in V\setminus\{0\}}\tfrac{| \langle \dot y(t), \Pi_{V_r}^Vv\rangle_{V',V}|}{\norm{ \Pi_{V_r}^Vv}{V}}\\
        =&\sup_{ v\in V_r\setminus\{0\}}\tfrac{| \langle \dot y(t)\circ  \Pi_{V_r}^V,v\rangle_{V',V}|}{\norm{v}{V}}= \norm{\dot y(t)\circ \Pi_{V_r}^V}{V_r'}.\label{eq:hh1}
    \end{align}
    Hence,
    \begin{align}\nonumber
        \norm{\dot e_2}{L^2(V')}^2 =& \norm{\dot y^r}{L^2(V')}^2-2\langle \dot y^r,  \dot y\circ  \Pi_{V_r}^V \rangle_{L^2(V')}+\norm{\dot y\circ  \Pi_{V_r}^V}{L^2(V')}^2\\
        \stackrel{\eqref{eq:hh0},\eqref{eq:hh1}}{=}&\norm{\dot y^r}{L^2(V_r')}^2-2\langle \dot y^r,  \dot y\circ  \Pi_{V_r}^V \rangle_{L^2(V_r')}+\norm{\dot y\circ \Pi_{V_r}^V}{L^2(V_r')}^2=\norm{\dot e_2}{L^2(V_r')}\label{eq:hh2}
    \end{align}
    Now we can estimate using \eqref{eq:theo:galerkin_conv_help0} as
    \begin{align*}
         \norm{\dot e_2}{L^2(V')}\stackrel{\eqref{eq:hh2}}{=} \norm{\dot e_2}{L^2(V_r')}
         &\stackrel{\eqref{eq:theo:galerkin_conv_help0}}{=}\sup_{v\in V_r, \norm{v}{L^2(V)}=1}|\langle \dot e_1+Ae_1-Ae_2,v \rangle_{L^2(V'),L^2(V)}|\\
         & \leq c \norm{\dot e_1}{L^2(V')}+ c\norm{e_1}{L^2(V)}+c\norm{e_2}{L^2(V)}\to 0 \ (r\to \infty).
    \end{align*}
\end{proof}
\begin{proof}[Proof of \cref{theo:APP_convergence}]
    First, we show $\bar u^r \to \bar u$. Choosing $\bu= \bar \bu^r$ in \eqref{eq:FOML3}, $\bu= \bar \bu$ in \eqref{eq:ROML3} and adding the two equations results in
    \begin{equation}\nonumber 
        \lambda \norm{ \bar \bu^r-\bar \bu}{\mU_T(\tint)}^2\leq \langle B'(\bar p^r-\bar p),\bar \bu-\bar \bu^r\rangle_{\mU_T(\tint)}.
    \end{equation}
    Adding $\pm p^r(\bar y)$, results in
     \begin{align}\label{eq:app_con_os}
        \lambda \norm{ \bar \bu^r-\bar \bu}{\mU_T(\tint)}^2\leq \langle B'(\bar p^r-p^r(\bar y)),\bar \bu-\bar \bu^r\rangle_{\mU_T(\tint)}+ \langle B'(p^r(\bar y)-\bar p),\bar \bu-\bar \bu^r\rangle_{\mU_T(\tint)}
    \end{align}
    For the first term, we obtain, using the ROM state equation, partial integration, the ROM adjoint equation, adding $\pm y^r(\bar \bu)$, and Young's inequality
    \begin{align*}
        \langle B'(\bar p^r-p^r(\bar y)),\bar \bu-\bar \bu^r\rangle_{\mU_T(\tint)} = & \langle \bar y^r-\bar y, y^r(\bar \bu)-\bar y^r\rangle_{L^2(H)}\\
        \leq &-\tfrac{1}{2}\norm{\bar y^r-y^r(\bar \bu)}{L^2(H)}^2 + \tfrac{1}{2}\norm{\bar y-y^r(\bar \bu)}{L^2(H)}^2.
    \end{align*}
   For the second term in \eqref{eq:app_con_os}, we have by Young's inequality
    \begin{equation}\nonumber
        \langle B'(p^r(\bar y)-\bar p),\bar \bu-\bar \bu^r\rangle_{\mU_T(\tint)}\leq \tfrac{\norm{B}{L^\infty}^2}{2\lambda}\norm{p^r(\bar y)-\bar p}{\mY_T(\tint)}^2+\tfrac{1}{2\lambda}\norm{\bar \bu-\bar \bu^r}{\mU_T(\tint)}^2
    \end{equation}
    Inserting this into into \eqref{eq:app_con_os}, yields
    \begin{equation}\nonumber 
        \lambda \norm{ \bar \bu^r-\bar \bu}{\mU_T(\tint)}^2+\norm{\bar y^r-y^r(\bar \bu)}{L^2(H)}^2\leq \norm{\bar y-y^r(\bar \bu)}{L^2(H)}^2+\tfrac{\norm{B}{L^\infty}^2}{2\lambda}\norm{p^r(\bar y)-\bar p}{\mY_T(\tint)}^2\to 0
    \end{equation}
     for $r\to \infty$, due to \cref{theo:APP_convergence} for $\bu=\bar \bu$ and $\tilde y = \bar y$, respectively. Now $\bar y^r \to \bar y$ in $\mY_T(\tint)$, follows from the decomposition $\bar y^r-\bar y=\bar y^r-y^r(\bar \bu)+y^r(\bar \bu)-\bar y\eqqcolon e_1+e_2$. For $e_1$, it holds by standard a priori estimates $\norm{e_1}{\mY_T(\tint)}\leq C\norm{\bar \bu-\bar \bu^r}{\mU_T(\tint)}\to 0$ for $C>0$ independent of $r$, and $e_2\to 0$ in $\mY_T(\tint)$ by \cref{theo:APP_convergence} for $\bu=\bar \bu$. With similar arguments, $\bar p^r- \bar p=\bar p^r-p^r(\bar y )+p^r(\bar y)-\bar p$ implies $\bar p^r\to \bar p$ in $\mY_{T}(\tint)$.
\end{proof}
\end{document}